\newtheorem{theorem}{Theorem}[section]
\newtheorem{lemma}[theorem]{Lemma}
\newtheorem{corollary}[theorem]{Corollary}
\newtheorem{conjecture}[theorem]{Conjecture}
\newtheorem{proposition}[theorem]{Proposition}
\newtheorem{remark}[theorem]{Remark}
\newtheorem{notation}[theorem]{Notation}
\newtheorem{definition}[theorem]{Definition}
\renewcommand{\wr}{\mathop{\mathrm{wr}}}
\newcommand{\Alt}{\mathop{\mathrm{Alt}}}
\newcommand{\Sym}{\mathop{\mathrm{Sym}}}
\newcommand{\Cay}{\mathop{\Gamma}}
\newcommand{\Aut}{\mathop{\mathrm{Aut}}}
\numberwithin{equation}{section}
\def\Z#1{{\bf Z}(#1)}
\def\cent#1#2{{\bf C}_{#1}(#2)}
\def\norm#1#2{{\bf N}_{#1}(#2)}
\def\Dic{{\rm Dic}}
\newcommand{\myitem}[1]{%
\item[#1]\protected@edef\@currentlabel{#1}%
}
\begin{document}

\title{On the asymptotic enumeration of Cayley graphs} 

\author[J. Morris]{Joy Morris}
\address{Department of Mathematics and Computer Science,
University of Lethbridge,\newline Lethbridge, AB. T1K 3M4. Canada.}
\email{joy.morris@uleth.ca}
\author[M. Moscatiello]{Mariapia Moscatiello}
\address{Mariapia Moscatiello, Dipartimento di Matematica \lq\lq Tullio Levi-Civita\rq\rq,\newline
 University of Padova, Via Trieste 53, 35121 Padova, Italy} 
\email{mariapia.moscatiello@math.unipd.it}

\author[P. Spiga]{Pablo Spiga}
\address{Pablo Spiga,
Dipartimento di Matematica e Applicazioni, University of Milano-Bicocca,\newline
Via Cozzi 55, 20125 Milano, Italy}\email{pablo.spiga@unimib.it}

\thanks{Address correspondence to P. Spiga, E-mail: pablo.spiga@unimib.it.}

\begin{abstract}
In this paper we are interested in the asymptotic enumeration of Cayley graphs. It has previously been shown that almost every Cayley digraph has the smallest possible automorphism group: that is, it is a digraphical regular representation (DRR). In this paper, we approach the corresponding question for undirected Cayley graphs. The situation is complicated by the fact that there are two infinite families of groups that do not admit any graphical regular representation (GRR). 

The strategy for digraphs involved analysing separately the cases where the regular group $R$ has a nontrivial proper normal subgroup $N$ with the property that the automorphism group of the digraph fixes each $N$-coset setwise, and the cases where it does not. In this paper, we deal with undirected graphs in the case where the regular group has such a nontrivial proper normal subgroup.

\smallskip

\begin{center}\textit{In memory of Carlo Casolo: a dear good friend}\end{center}
\end{abstract}

\keywords{regular representation, Cayley graph, automorphism group, asymptotic enumeration, graphical regular representation, GRR, normal Cayley graph, Babai-Godsil conjecture, Xu conjecture}
\subjclass[2010]{05C25, 05C30, 20B25, 20B15}
\maketitle
\section{Introduction}\label{section:introduction}

We consider only finite groups and graphs in this paper. A graph $\Gamma$ is an ordered pair $(V,E)$ with $V$ a finite non-empty set of vertices, and $E$ a set of unordered pairs from $V$, representing the edges. An automorphism of a graph is a permutation on $V$ that preserves the set $E$.

\begin{definition}
Let $R$ be a group and $S=S^{-1}$ an inverse-closed subset of $R$. The \emph{Cayley graph} $\Cay(R,S)$ is the graph with $V=R$ and $\{r,t\} \in E$ if and only if $tr^{-1} \in S$. 
\end{definition}

The problem of finding graphical regular representations (GRRs) for groups has a long history. Mathematicians have studied graphs with specified automorphism groups at least as far back as the 1930s, and in the 1970s there were many papers devoted to the topic of finding GRRs (see for example \cite{babai11,Het,Im1, Im2,Im3,NW1,NW2,NW3,Wat}), although the ``GRR" terminology was coined somewhat later.

\begin{definition}
A \emph{graphical regular representation} (GRR) for a group $R$ is a graph whose full automorphism group is the group $R$ acting regularly on the vertices of the graph.
\end{definition}

It is an easy observation that when $\Cay(R,S)$ is a Cayley graph, the group $R$ acts regularly on the vertices as a group of graph automorphisms. A GRR for $R$ is therefore a Cayley graph on $R$ that admits no other automorphisms.

The main thrust of much of the work through the 1970s was to determine which groups admit GRRs. This question was ultimately answered by Godsil in \cite{God}.

\begin{theorem}[Godsil, \cite{God}]
A group has a graphical regular representation if and only if it is not one of:
\begin{itemize}
\item a generalised dicyclic group (see Definition~\ref{defeq:2});
\item an abelian group of exponent greater than $2$; or
\item one of $13$ small groups (of order at most 32).
\end{itemize}
\end{theorem}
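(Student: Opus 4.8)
The plan is to treat the two implications of Godsil's theorem separately, the ``only if'' direction being short and the ``if'' direction being the substantial part.

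For the ``only if'' direction I would record the two structural obstructions and then dispatch the finite list. If $R$ is abelian of exponent greater than $2$, inversion $\iota\colon x\mapsto x^{-1}$ is an automorphism of $R$ fixing the identity, and since every connection set satisfies $S=S^{-1}$ it is a graph automorphism of $\Cay(R,S)$; as some element of $R$ is not an involution, $\iota$ is nontrivial and fixes a vertex, so $\Cay(R,S)$ is never a GRR. If $R=\langle A,b\rangle$ is generalised dicyclic ($A$ abelian, $b^{2}=a_{0}$ an involution, $b^{-1}xb=x^{-1}$ for $x\in A$), a direct computation gives $x^{2}=a_{0}$ for every $x\in R\setminus A$, hence $x^{-1}=a_{0}x$ there; consequently the permutation $\sigma$ of $R$ fixing $A$ pointwise and sending each $x\notin A$ to $a_{0}x$ is a graph automorphism of $\Cay(R,S)$ for every inverse-closed $S$, it fixes the identity, and it is nontrivial, so again no GRR exists. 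For the $13$ sporadic exceptions, all of order at most $32$, the statement is a finite verification: enumerate all inverse-closed subsets and check that no Cayley graph built on them has automorphism group of order $|R|$.

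For the ``if'' direction I would argue via the position of the regular copy of $R$ inside $A:=\Aut(\Cay(R,S))$. Writing $A_{1}$ for the stabiliser of the identity vertex, $A_{1}$ fixes $S$ setwise and $R$ is a GRR exactly when $A_{1}=1$. The first step is to control the \emph{normal} case $R\norml A$, where $A=R\rtimes(\Aut(R)_{S})$: here it suffices to find an inverse-closed generating set $S$ stabilised setwise by no nontrivial automorphism of $R$, and comparing the number of inverse-closed subsets of $R$ with $\sum_{1\neq\alpha\in\Aut(R)}f(\alpha)$, where $f(\alpha)$ counts those fixed by $\alpha$, produces such an $S$ whenever $R$ is large enough and is neither abelian of exponent $>2$ nor generalised dicyclic. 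The second, harder step is to show that for all but the listed small groups $S$ can in addition be chosen so that $R\norml\Aut(\Cay(R,S))$, i.e.\ so that the chosen graph is normal; this requires understanding which overgroups of a regular group $R$ can occur inside $\Sym(R)$.

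I would then split on solvability. For solvable $R$ the overgroup analysis and the choice of $S$ can be carried out inductively through the structure of $R$ --- first $p$-groups, then nilpotent groups, then general solvable groups --- isolating the finitely many small exceptions as one goes; this is the route of the classical work of Nowitz, Watkins, Imrich and Hetzel cited above. For non-solvable $R$ one has both more automorphisms to defeat and stronger constraints on the composition factors of any overgroup in $\Sym(R)$, and a probabilistic argument on a uniformly random inverse-closed $S$ shows that a generic $\Cay(R,S)$ is a GRR. The main obstacle, and the part I expect to be genuinely delicate, is exactly this overgroup control in the non-normal case: one must rule out, for every $R$ off the list, the existence of a transitive group on $R$ that strictly contains the holomorph of $R$ and preserves some Cayley graph, and it is here that the heaviest casework --- and ultimately structural input about finite permutation groups --- is required.
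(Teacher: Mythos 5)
This theorem is quoted from Godsil's paper \cite{God}; our paper gives no proof of it, so there is no internal argument to compare yours against. Judged on its own terms, your ``only if'' direction is essentially right: for $R$ abelian of exponent greater than $2$ the inversion map is a nontrivial automorphism fixing $1$ and preserving every inverse-closed $S$, and for $R=\Dic(A,y,x)$ your $\sigma$ is exactly the map $\bar{\iota}_A$ of Definition~\ref{defeq:2}, which fixes $A$ pointwise and inverts everything outside $A$, hence preserves every inverse-closed $S$. The one step you elide is verifying that $\sigma$ is actually a group automorphism of $R$ (this is what guarantees it preserves adjacency once $S^\sigma=S$); it is true, but it uses the centrality of $y$ and the relation $a^x=a^{-1}$, and should be checked rather than asserted.

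The genuine gap is the ``if'' direction, which in your write-up is a plan rather than a proof. Everything of substance --- the overgroup analysis in $\Sym(R)$, the inductive treatment of solvable groups, the isolation of the $13$ exceptions --- is deferred either to the cited literature (Hetzel, Imrich, Nowitz--Watkins) or to an unspecified ``probabilistic argument'' for nonsolvable $R$. That last step is the most problematic: the claim that a uniformly random inverse-closed $S$ generically yields a GRR is, for general $R$, precisely the Babai--Godsil conjecture that the present paper is itself working to establish, and it was not available as a tool when Godsil proved this theorem; Godsil's actual argument for nonsolvable groups is a deterministic construction resting on structural results about overgroups of regular subgroups, not on showing that almost all connection sets work. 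As written, your outline either invokes machinery that does not yet exist at this point in the logical development (circularity) or silently imports the entire content of several long papers. To make this a proof you would need to either carry out the counting and overgroup analysis explicitly, or state precisely which published results you are quoting and check that their hypotheses cover every group off the excluded list.
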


A corresponding result for DRRs by Babai was much simpler, requiring no excluded families and finding only $5$ exceptional small groups.

Babai and Godsil made the following conjecture.

\begin{conjecture}[\cite{BaGo}; Conjecture 3.13, \cite{Go2}]
If $R$ is not generalised dicyclic or abelian of exponent greater than $2$, then for almost all inverse-closed subsets $S$ of $R$, $\Cay(R,S)$ is a GRR.
\end{conjecture}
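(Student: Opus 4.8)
The plan is to recast the conjecture as an enumeration problem, in the spirit of the proof for DRRs. Write $n=|R|$, let $t$ be the number of involutions of $R$, and let $\iota$ be inversion on $R$; the number of inverse-closed subsets of $R\setminus\{1\}$ equals $2^{(n-1+t)/2}$, so it suffices to show, uniformly over all admissible $R$ (neither abelian of exponent $>2$ nor generalised dicyclic, with $n$ large enough to avoid Godsil's $13$ sporadic exceptions), that the number of inverse-closed $S$ for which $\Cay(R,S)$ is \emph{not} a GRR is $o\!\bigl(2^{(n-1+t)/2}\bigr)$. If $\Cay(R,S)$ is not a GRR then $A:=\Aut(\Cay(R,S))$ properly contains the right-regular copy of $R$, so the point stabiliser $A_1$ is nontrivial; the workhorse estimate is that for a fixed permutation $\sigma$ of $R$ fixing the vertex $1$, the number of inverse-closed $S$ with $\sigma\in A$ is exactly $2^{c(\sigma)}$, where $c(\sigma)$ is the number of orbits of $\langle\sigma,\iota\rangle$ on $R\setminus\{1\}$.

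The argument then splits on the coarse structure of $A$: either (Case A) $R$ has a nontrivial proper normal subgroup $N$ such that some automorphism of $\Cay(R,S)$ outside $R$ fixes every $N$-coset setwise --- equivalently the kernel of the $A$-action on the $N$-cosets properly contains $N$ --- or (Case B) no such $N$ exists. Case A is what this paper treats. There the block structure forces $\Cay(R,S)$ to be a generalised wreath product along a chain of normal subgroups of $R$, which typically confines $S$ to be far from generic --- away from some proper normal subgroup it must be a union of $N$-cosets. Summing over the at most $n$ choices of $N$, in the bulk of the parameter range this restricts $S$ to a set of size $o\!\bigl(2^{(n-1+t)/2}\bigr)$, while the extreme cases --- $N$ very small (in particular $|N|=2$ with $N$ central, where $\Cay(R,S)$ is a lexicographic product over the order-$n/2$ group $R/N$, of which there are only about $2^{n/4}$), $N$ very large, and $N$ itself abelian of exponent $>2$ or generalised dicyclic so that the fibre $\Cay(N,S\cap N)$ is never a GRR --- are dispatched separately, the last by showing that the fibre's symmetry generically fails to extend to all of $\Cay(R,S)$ unless the graph is an honest wreath product, of which there are few.

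In Case B the group $A$ is of normal-Cayley type, $A=R\rtimes\Aut(R,S)$ with $\Aut(R,S)\neq 1$, or else $A$ is primitive and severely restricted. In the first sub-case one sums $2^{c(\sigma)}$ over nontrivial $\sigma\in\Aut(R)$, using $|\Aut(R)|\le n^{\log_2 n}=2^{o(n)}$; in the second one invokes the classification of finite primitive groups containing a regular subgroup and disposes of the resulting short list by hand. The inequality that makes the first sub-case work is $c(\sigma)\le (n-1+t)/2$ for every nontrivial $\sigma\in\Aut(R)$, with equality forcing $\sigma$ to fix every involution of $R$ and invert every non-involution, i.e.\ $\sigma=\iota$; but $\iota$ is an automorphism precisely for abelian $R$, which are either excluded (exponent $>2$) or of exponent $2$, where $\iota$ is trivial. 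Hence admissible $R$ gain a genuine gap, and what remains is to make that gap of size $\omega\bigl((\log n)^2\bigr)$ so as to beat $|\Aut(R)|$.

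The main obstacle, I expect, is exactly this quantitative control when a symmetry fixes a large part of the group: a $\sigma\in A$ close to inversion, an inner automorphism by an element with an index-$2$ centraliser, or a transvection when $R$ is near-elementary-abelian, each has $c(\sigma)$ near the maximum $(n-1+t)/2$, so the naive sum over such $\sigma$ need not converge. The resolution --- and the structural reason the two families must be excluded --- is a trade-off: whenever such a near-maximal symmetry persists for many $S$, one shows either that $R$ is thereby forced into one of the two excluded families (or into the finite list of small groups covered by Godsil's theorem), or that the very mechanism creating the large fixed-point set simultaneously creates many involutions, raising $t$ and hence the benchmark $2^{(n-1+t)/2}$ enough to absorb the estimate. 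Turning this dichotomy between ``large stabilisers'' and ``many involutions'' into a clean, $R$-uniform bound --- together with handling the situations where $N$ in Case~A, or a quotient of $R$, is itself abelian of exponent $>2$ or generalised dicyclic and so carries unavoidable symmetry --- is where the real difficulty lies; the rest is bookkeeping on top of Godsil's theorem and the digraph template.
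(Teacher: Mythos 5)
The statement you are addressing is stated in the paper as a conjecture (Babai--Godsil), and the paper does not prove it: it establishes only one of the pieces needed, namely Theorems~\ref{main1} and~\ref{main2}, which bound the number of connection sets admitting a nontrivial automorphism fixing the identity and normalising (respectively, fixing every orbit of) a proper nontrivial normal subgroup $N$. Your proposal is a strategy outline for the whole conjecture rather than a proof: the decisive quantitative steps are explicitly deferred (\emph{``what remains is to make that gap of size $\omega((\log n)^2)$''}, \emph{``where the real difficulty lies''}), so even granting every structural claim it does not constitute a proof of the statement.

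More importantly, the mechanism you propose for Case~A is not correct. If some automorphism outside $R$ fixes every $N$-coset setwise, this does not force $\Cay(R,S)$ to be a (generalised) wreath or lexicographic product, nor does it force $S$ to be a union of $N$-cosets away from some normal subgroup: the kernel of the action on the $N$-cosets can act nontrivially but far from symmetrically on each coset. This is precisely the difficulty the paper singles out --- the quotient on the $N$-orbits and the induced subgraph on a single orbit can both be GRRs, so no product decomposition or induction is available. The paper's treatment of this case instead rests on the edge-counting invariants $\sigma(S,u,j)$ (common neighbours of $1$ and $u$ inside a fixed coset), the antisymmetric-matrix counting Lemma~\ref{lemmanew}, the structural Lemmas~\ref{lemma:icecream}--\ref{lemma:aux3}, and the normaliser analysis of Lemma~\ref{yetanother} ruling out automorphisms that fix or invert every group element; nothing in your sketch replaces this machinery. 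Two further inaccuracies: your ``workhorse estimate'' that the number of inverse-closed $S$ with $\sigma\in\Aut(\Cay(R,S))$ equals $2^{c(\sigma)}$ is only an upper bound (equality essentially requires $\sigma\in\Aut(R)$, since membership in $\Aut(\Cay(R,S))$ constrains far more than the neighbourhood of $1$); and the claim that a central $N$ of order $2$ whose cosets are fixed setwise yields a lexicographic product over $R/N$ is again unjustified for the same reason.
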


The details of this conjecture are somewhat imprecise; we are interested in the following more specific formulation:
$$\lim_{r \to \infty} \min\left\{ \frac{|\{S \subseteq R: \Aut(\Cay(R,S))=R\}|}{2^r}: R\text{ admits a GRR and }|R|=r\right\} =1.$$
From Godsil's theorem, as $r\to \infty$, the condition ``$R$ admits a GRR" is equivalent to ``$R$ is neither a generalised dicyclic group, nor abelian of exponent greater than $2$."

The corresponding result for Cayley digraphs (which does not  require any families of groups to be excluded) was proved by the first and third authors in~\cite{MSMS}. 

The strategy used in \cite{MSMS} (which was based on previous work in \cite{BaGo} by Babai and Godsil) to prove that almost every Cayley digraph is a DRR, involved three major pieces. One piece was to show that there are not many Cayley digraphs admitting digraph automorphisms that are also group automorphisms. A second piece of the proof involved considering the possibility that the group $R$ has a proper nontrivial normal subgroup $N$, and there is a digraph automorphism that fixes every orbit of $N$ setwise. This piece itself naturally divides into two parts. If $|N|$ is relatively small in comparison with $|R|$, then showing that roughly $2^{|R|/|N|}$ digraphs do not admit a particular type of automorphism is significant, while if $|N|$ is relatively large (for example if $|N|=|R|/c$ for some constant $c$) this sort of bound is not useful for our purposes. Conversely, if $|N|$ is relatively large then showing that roughly $2^{|N|}$ digraphs do not admit a particular type of automorphism is significant, but such a bound is not useful if $|N|$ is relatively small. So we need to combine bounds of each type to come up with an overall bound. The third and final piece of the proof involved considering the possible existence of digraph automorphisms that do not fix all orbits of any normal subgroup $N$ of $R$.

While the second piece may not seem entirely natural, it is important to consider because it covers a possibility that does not readily succumb to induction. If a graph only admits automorphisms that fix every orbit of $N$ setwise, then the quotient graph on the orbits of $N$ may be in fact a GRR. The induced subgraph on a single orbit may very well also be a GRR, so an inductive argument will reduce a non-GRR to two smaller GRRs, making induction virtually impossible to use effectively.  

Similarly to the results about existence of GRRs and DRRs, the requirement that a connection set for a graph must be inverse-closed creates complications that make the proof of the Babai-Godsil conjecture more difficult for graphs than for digraphs. Rather than trying to accomplish the full result in a single paper, it makes sense to divide the work into the main pieces that were used to prove the DRR result, and attempt to show each of these pieces for GRRs. 

The first piece, showing that there are not many Cayley graphs admitting graph automorphisms that are also group automorphisms (unless the group is generalised dicyclic or abelian of exponent greater than $2$) was accomplished by the third author in~\cite{spiga11}. Some of the main results from that work are also used in this paper, and we have included them as Theorem~\ref{l:aut} and Proposition~\ref{propo:aut}.

The goal of this paper is to complete the second piece of the proof: that is, to show that the number of Cayley graphs on $R$ that admit nontrivial graph automorphisms that fix the vertex $1$ and normalise some proper nontrivial normal subgroup $N$ of $R$, is vanishingly small as a proportion of all Cayley graphs on $R$. 

As in the work on DRRs, this problem naturally divides into the cases where the normal subgroup $N$ is ``large" or ``small" relative to $|R|$. Our main results are Theorem~\ref{main1} and Theorem~\ref{main2}, which we prove in Sections~\ref{sec:Nlarge} and~\ref{sec:Nsmall}, respectively. In the case of graphs, it emerges that we also need to consider separately graph automorphisms that fix or invert every element of the group. We deal with these in Section~\ref{sec:inversion}, and this piece of our work applies whether or not $R$ admits any proper nontrivial normal subgroup.

 Given a finite group $R$, we let $2^{\mathbf{c}(R)}$ denote the number of inverse-closed subsets of $R$. (The value $\mathbf c(R)$ is defined explicitly in Definition~\ref{defeq:1}.)
\begin{theorem}\label{main1}
Let $R$ be a finite group and let $N$ be a non-identity proper normal subgroup of  $R$.  Then, the set 
$$\{S\subseteq R\mid S=S^{-1},\,R=\norm{\Aut(R,S)}{R},\,  \exists f\in \norm{\Aut(\Cay(R,S))}{N}\textrm{ with }f\ne 1 \textrm{ and }1^f=1\},$$
has cardinality at most $2^{\mathbf{c}(R)-\frac{|N|}{96}+2\log_2|R|+(\log_2|R|)^2+3}$. Moreover, if $R$ is neither abelian of exponent greater than $2$ nor generalised dicyclic, we may drop the condition ``$R=\norm  {\Aut(\Cay(R,S))}R$'' in the definition of the set.
\end{theorem}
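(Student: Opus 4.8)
\medskip
\noindent\emph{Proof strategy.}
The plan is to encode an inverse-closed set $S$ by its family of ``coset patterns'' and to show that the existence of $f$ forces so many coincidences among these patterns that the surviving $S$ make up at most a $2^{-|N|/96}$-fraction of the $2^{\mathbf c(R)}$ inverse-closed subsets of $R$, the remaining slack being used up enumerating a bounded amount of data attached to $f$. First I would record the structure of such an $f$. Identifying $N$ with the regular subgroup $\{\rho_n:n\in N\}$ of $\Aut(\Cay(R,S))$, the hypothesis $f\in\norm{\Aut(\Cay(R,S))}{N}$ gives $f\rho_nf^{-1}=\rho_{f(n)}$; together with $1^f=1$ this forces $\sigma:=f|_N\in\Aut(N)$ and, crucially, $f(vn)=f(v)\,\sigma(n)$ for all $v\in R$, $n\in N$. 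Hence $f$ permutes the $N$-cosets, inducing $\overline f\in\Sym(R/N)$ fixing the coset $N$, and on each coset $f$ is determined by $\sigma$ and the image of one representative; moreover $1^f=1$ gives $f(S)=S$. The only data about $f$ that the rest of the argument needs to enumerate is $\sigma$ together with at most two elements of $R$; since $|\Aut(N)|\le|N|^{\log_2|N|}\le 2^{(\log_2|R|)^2}$, this accounts for the $2^{(\log_2|R|)^2+2\log_2|R|+3}$ factor in the bound, so it suffices to save $2^{|N|/96}$ for each fixed such choice.

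Next is the reduction to a counting problem. For $C=xN$ put $P_C:=x^{-1}(S\cap C)\subseteq N$, so $S\leftrightarrow(P_C)_{C\in R/N}$; being inverse-closed becomes $P_{x^{-1}N}=\{xn^{-1}x^{-1}:n\in P_{xN}\}$ and $f(S)=S$ becomes $P_{\overline f(C)}=m_C\,\sigma(P_C)$ for suitable $m_C\in N$. Partitioning $R/N$ into the orbits of $\langle\overline f,\overline\iota\rangle$, where $\overline\iota\colon xN\mapsto x^{-1}N$, in each orbit $\mathcal O$ all the patterns are determined by any single one of them, which must be invariant under the subgroup $K_{\mathcal O}\le\Sym(N)$ generated by composing the maps $n\mapsto m\sigma(n)$ and $n\mapsto xn^{-1}x^{-1}$ around closed walks in $\mathcal O$ (a subgroup of the affine group $N\rtimes\Aut(N)$). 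Thus the number of compatible $S$ equals $\prod_{\mathcal O}2^{\#(K_{\mathcal O}\text{-orbits on }N)}$, whereas $2^{\mathbf c(R)}$ is the analogous product over the finer partition into $\langle\overline\iota\rangle$-orbits alone, and the task becomes to exhibit a total orbit-deficit of at least $|N|/96$. If $\overline f\ne1$, a nontrivial $\overline f$-cycle $C_0,\dots,C_{k-1}$ makes $P_{C_1},\dots,P_{C_{k-1}}$ functions of $P_{C_0}$, so these cosets and their $\overline\iota$-images contribute only about $2^{|N|}$ in place of roughly $2^{k|N|/2}$; for $k\ge3$ this is a saving of at least $2^{|N|/2}$, and $k=2$ falls into the degenerate case below. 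If $\overline f=1$, then either some $m_C\ne1$, forcing $P_C$ to be a union of $\langle m_C\rangle$-cosets, or $\sigma\ne1$, in which case $S\cap N$ is $\sigma$-invariant and, applied to the bipartite subgraphs between distinct cosets, the automorphism condition additionally forces each ``difference-coset'' pattern to be invariant under a conjugate of $\sigma$ composed with translations. In each of these cases the bookkeeping against $\overline\iota$ --- coalescing pairs $\{C,C^{-1}\}$, self-paired cosets ($x^2\in N$) worth only about $2^{|N|/2}$, involutory holonomies fixing up to half of $N$ --- costs at most a bounded multiplicative factor, and handled carefully this leaves the asserted $2^{|N|/96}$.

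The main obstacle, as I see it, is the residual configuration in which none of the above produces any saving: $\overline f$ has only fixed points and $2$-cycles, every $\overline f$-fixed coset is fixed pointwise, $\sigma$ is an involution moving almost nothing, and each holonomy is trivial or an involution fixing nearly all of $N$. Chasing the relations shows this forces $N$ to be abelian, $\sigma$ to be inversion composed with a conjugation, and $f$ to act on every coset as $r\mapsto r^{-1}$ --- that is, $R=\Fix\cup\Inv$, exactly the situation reserved for Section~\ref{sec:inversion}. I would close the gap either by invoking the inversion results of that section directly, or --- using the hypothesis $R=\norm{\Aut(\Cay(R,S))}{R}$, under which $f$ is a group automorphism of $R$ --- by observing that a ``fix-or-invert'' automorphism still makes $S\cap C$, for each coset $C$ meeting $\Inv$, invariant under an affine map $s\mapsto s^{-1}v^2$, which supplies the missing linear saving; the groups that are abelian of exponent greater than $2$ or generalised dicyclic are excluded precisely here, since for them the relevant universal extra automorphism normalises $R$, so the displayed set is empty under the normality hypothesis. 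Lastly, for the ``Moreover'' clause one verifies that dropping $R=\norm{\Aut(\Cay(R,S))}{R}$ only adjoins configurations already bounded by Theorem~\ref{l:aut} and Proposition~\ref{propo:aut}, whose estimates are stronger than the one claimed. The places where I expect the actual work to concentrate are the precise bookkeeping behind the constant $96$ and the careful disposal of this ``fix-or-invert'' degenerate case.
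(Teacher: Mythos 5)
Your high-level architecture does coincide with the paper's (separate off the sets with $R<\norm{\Aut(\Cay(R,S))}{R}$ via Proposition~\ref{propo:aut}, then classify $f$ by its action on $N$-cosets and on $N$ itself, with a residual ``fix-or-invert'' case), but the proposal has a genuine gap exactly at the point you flag as the main obstacle, and the repair you sketch there would fail. If $f$ fixes or inverts every element of $R$, then \emph{every} inverse-closed set $S$ satisfies all of the invariance conditions your holonomy bookkeeping can extract: $S^f=S$ automatically, and on a coset $C$ with $C^f=C^{-1}$ the composite of $f$ with inversion is the identity, so there is no ``affine map $s\mapsto s^{-1}v^2$'' forcing anything and no ``missing linear saving'' to be had. (This is precisely why abelian groups of exponent greater than $2$ and generalised dicyclic groups admit no GRR.) The only way out is your first option: one must prove that a nontrivial subgroup of $\Sym(R)_1$ fixing or inverting every element of $R$ forces $\norm{G}{R}>R$, so that these $S$ violate the hypothesis $R=\norm{\Aut(\Cay(R,S))}{R}$ and contribute nothing. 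That is Lemma~\ref{yetanother} (and Corollary~\ref{cor:new}), whose proof occupies all of Section~\ref{sec:inversion} and rests on Li's classification of primitive groups with an abelian regular subgroup; it is not a formality, and your alternative route (``a fix-or-invert automorphism normalises $R$'') is only true for the two excluded families, not in general.

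The second substantive gap is the case $\bar f=1$ with $\sigma$ one of the exceptional automorphisms permitted by Theorem~\ref{l:aut}: inversion on an abelian $N$ of exponent greater than $2$, or $\bar{\iota}_A$ on a generalised dicyclic $N$. There $S\cap N$ yields no saving, and your claim that the induced constraints on the other cosets cost ``at most a bounded multiplicative factor'' is exactly where the paper must work hardest: Lemmas~\ref{lemma:aux1},~\ref{lemma:aux2} and~\ref{lemma:aux3} classify the configurations in which the permutation induced on a coset $\gamma N$ fails to give a $2^{-|N|/24}$ saving, and such configurations genuinely occur ($C_4\times C_2^{\ell}$, $Q_8\times C_2^{\ell}$, the order-$16$ group $\langle x,y\mid x^4=y^4=(xy)^4,\,x^2=y^2\rangle$, abelian extensions, etc.); each must then be shown to force $x^f\in\{x,x^{-1}\}$ for all $x\in R$, i.e.\ to collapse into the fix-or-invert case above rather than to produce an unbounded family of bad $S$. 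Without that classification, neither the constant $96$ nor any linear-in-$|N|$ exponent is established. In short: your outline identifies the right case division, but the two ingredients that carry the proof --- Lemma~\ref{yetanother} and the trio of auxiliary lemmas --- are named as difficulties rather than supplied, and the one concrete substitute you propose for the first of them does not work.
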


\begin{theorem}\label{main2}
Let $R$ be a finite group and let $N$ be a non-identity proper normal subgroup of  $R$.  Then, the set 
\begin{align*}
\{S\subseteq R\mid& S=S^{-1},\,R=\norm{\Aut(R,S)}{R},\,  \exists f\in \norm{\Aut(\Cay(R,S))}{N}\textrm{ with }f\ne 1 \textrm{ and }1^f=1, f \textrm{ fixes each }N\textrm{-orbit setwise}\}
\end{align*}
has cardinality at most $2^{\mathbf{c}(R)-\frac{|R|}{192|N|}+(\log_2|R|)^2+3}$. Moreover, if $R$ is neither abelian of exponent greater than $2$ nor generalised dicyclic, we may drop the condition ``$R=\norm  {\Aut(\Cay(R,S))}R$'' in the definition of the set.
\end{theorem}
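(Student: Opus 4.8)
The strategy follows the one used for digraphs. Fix $N$. For each inverse-closed $S$ in the displayed set we extract from a witnessing automorphism $f$ a small ``certificate'' --- a subgroup $M$ with $N\le M\le R$ --- which forces $S$ into a set $W_M$ of size at most $2^{\mathbf{c}(R)-|R|/(192|N|)}$; since the number of subgroups of $R$ containing $N$ is at most $2^{(\log_2|R|)^2}$, summing over certificates (and absorbing the remaining bookkeeping into the additive $3$) gives the bound.

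\emph{Setup.} Let $S$ be in the set and $f$ a witness. Replacing $f$ by a suitable power, we may assume $f$ has prime order (a power of $f$ still fixes $1$, normalises $N$, fixes every $N$-coset setwise, and lies in $\Aut(\Cay(R,S))$). The $N$-cosets form a block system for $\langle R,f\rangle$; since $f$ fixes every block setwise and normalises $N$, for each block $B$ (identified with $N$) the restriction $f|_B$ normalises the regular copy of $N$ in $\Sym(B)$, hence may be written $f|_B=\tau_B\phi_B$ with $\tau_B\in N$ a right translation and $\phi_B\in\Aut(N)$, and $\tau_N=1$. Write $S=\bigsqcup_{Nc}S_{Nc}$ with $S_{Nc}=S\cap Nc$. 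A direct check shows that for blocks $B=Nt$ and $B'=Nt'$ the bipartite subgraph of $\Cay(R,S)$ between $B$ and $B'$ is governed solely by the coset-block $S_{Nc}$ with $Nc=Nt't^{-1}$, and that each coset $Nc$ with $c\notin N$ governs exactly $|R|/|N|$ such bipartite subgraphs, namely those between $B$ and $\sigma_c(B)$ as $B$ runs over the blocks, where $\sigma_c$ is the fixed-point-free permutation of the blocks induced by left multiplication by $Nc$ in $R/N$.

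\emph{The local condition and the principal case.} That $f$ preserves the bipartite subgraph between $B$ and $\sigma_c(B)$ is a homogeneous linear condition on $S_{Nc}$; a short argument shows it is trivial precisely when $f|_B$ and $f|_{\sigma_c(B)}$ agree under the natural twisted identification of the two blocks, and that when nontrivial it forces $S_{Nc}$ to be invariant under right multiplication by a nontrivial subgroup of $N$, hence by some prime-order subgroup of $N$. Call $Nc$ (with $c\notin N$) \emph{active} if this condition is nontrivial for some block $B$. Consider first the principal case, where every $\phi_B$ is trivial: then $f|_B=\tau_B$ and agreement means $\tau_B=\tau_{\sigma_c(B)}$, so $Nc$ is \emph{not} active exactly when $B\mapsto\tau_B$ is invariant under $\sigma_c$, and one checks that $\{Nc:Nc\text{ not active}\}$ is a subgroup $M/N$ of $R/N$; it is proper, for otherwise $B\mapsto\tau_B$ is constant equal to $\tau_N=1$ and $f=1$. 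Hence at least $|R/N|-|M/N|\ge|R|/(2|N|)$ cosets are active. For each active $Nc$, the coset-block $S_{Nc}$ is restricted to lie in the set of subsets of $Nc$ invariant under some prime-order subgroup of $N$ --- a set of size at most a constant fraction of $2^{|N|}$, the fraction bounded away from $1$ uniformly and \emph{independent of $f$}; since the $S_{Nc}$ for distinct pairs $\{Nc,Nc^{-1}\}$ are disjoint coordinate-blocks and inverse-closure determines $S_{Nc^{-1}}$ from $S_{Nc}$, the resulting set $W_M$ has size at most $2^{\mathbf{c}(R)-|R|/(192|N|)}$, with a comfortably better constant in this principal case. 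The certificate is the subgroup $M$.

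\emph{The remaining cases, and the main obstacle.} If some $\phi_B$ is nontrivial, there are two possibilities. If $f$ acts on every block by one and the same automorphism of $N$ (after twisting) with all $\tau_B=1$, then $f$ is an automorphism of the group $R$ fixing $S$ setwise; this is incompatible with $R=\norm{\Aut(\Cay(R,S))}{R}$ (equivalently, with $S$ being fixed setwise by no nontrivial automorphism of $R$), so this configuration does not occur, and for the ``moreover'' statement one reinstates it and absorbs the corresponding $S$ using Theorem~\ref{l:aut} and Proposition~\ref{propo:aut}, which bound the number of $S$ admitting a nontrivial group automorphism when $R$ is neither abelian of exponent greater than $2$ nor generalised dicyclic. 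In the remaining configurations $f$ is genuinely inhomogeneous across the blocks, and the principal-case analysis carries over, the slack in the constant $192$ accommodating the extra losses; automorphisms fixing or inverting every element of $R$ form a degenerate sub-family and, where not already covered, are disposed of by appeal to Section~\ref{sec:inversion}. The step I expect to be the main obstacle is this inhomogeneous case with nontrivial $\phi_B$: one must show that the non-active cosets still form a subgroup-indexed family (so the certificate stays of size $2^{(\log_2|R|)^2}$) and that a coset can fail to be active only when $f$ is, locally, a group automorphism --- which is precisely what links the argument back to the self-normalising hypothesis and to the results on connection sets fixed by group automorphisms.
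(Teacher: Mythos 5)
Your proposal transplants the digraph strategy: decompose $S$ into coset-blocks $S_{Nc}$, call a coset \emph{active} when $f$-invariance imposes a nontrivial condition on $S_{Nc}$, and charge a constant-factor saving to each active coset, with a subgroup $M\le R$ as certificate. This is a genuinely different route from the paper, which instead counts common neighbours: for $u,v$ in the same orbit of the stabiliser on a block $\mathcal{O}_i$ it bounds the number of inverse-closed $S$ with $|\sigma(S,u,j)|=|\sigma(S,v,j)|$ for all $j$ (Lemma~\ref{lemmanew} and Propositions~\ref{propo:1},~\ref{ioddprop},~\ref{ioddboundprop}), assembling independence across $j$ via a Caro--Tur\'an--Wei independent set in an auxiliary graph on $R/N$.

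The gap is the claim that an active coset forces $S_{Nc}$ into a set whose density among the admissible subsets of $Nc$ is ``bounded away from $1$ uniformly and independent of $f$.'' For inverse-closed sets this is false. When $Nc=Nc^{-1}$ the admissible $S_{Nc}$ are the $2^{\mathbf{c}(Nc)}$ inverse-closed subsets of $Nc$, and invariance under right multiplication by an involution $z\in N$ costs nothing if the inversion map on $Nc$ coincides with that right multiplication --- which happens exactly when $c^2=z$ and $c$ inverts $N$ by conjugation, so that $Nc$ contains no involutions and every inverse-closed subset of $Nc$ is automatically $z$-invariant. This degeneracy is precisely what produces the exceptional conclusions~\eqref{propo:eq2}--\eqref{propo:eq4} of Proposition~\ref{propo:1}, and it is why the paper needs the three-point refinement of Proposition~\ref{ioddprop}, the restriction to cosets of order greater than $2$ in Proposition~\ref{ioddboundprop}, and a wholly separate argument (Lemma~\ref{case3}, passing to a core $K$ with $|R:K|=2$ and odd prime-order stabilisers) when $R/N$ is an elementary abelian $2$-group and no such coset exists. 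Slack in the constant $192$ cannot absorb a saving that is identically zero. A secondary problem: your certificate $M$ does not determine $W_M$, since the admissible $S$ depend on \emph{which} prime-order subgroup of $N$ each active block must be invariant under, i.e.\ on the translations $\tau_B$; summing over those choices costs far more than $2^{(\log_2|R|)^2}$ unless that dependence is controlled, which the proposal does not do.
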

By distinguishing the cases that $|N|\ge \sqrt{|R|}$ and $|R:N|\ge \sqrt{|R|}$, we obtain the following corollary.
\begin{corollary}\label{cor}
Let $R$ be a finite group and let $N$ be a non-identity proper normal subgroup of  $R$.   Then, the set 
\begin{align*}
\{S\subseteq R\mid& S=S^{-1},\,R=\norm{\Aut(R,S)}{R},\,  \exists f\in \norm{\Aut(\Cay(R,S))}{N}\textrm{ with }f\ne 1 \textrm{ and }1^f=1, f \textrm{ fixes each }N\textrm{-orbit setwise}\}
\end{align*}
has cardinality at most $2^{\mathbf{c}(R)-\frac{\sqrt{|R|}}{192}+2\log_2|R|+(\log_2|R|)^2+3}$. Moreover, if $R$ is neither abelian of exponent greater than $2$ nor generalised dicyclic, we may drop the condition ``$R=\norm  {\Aut(\Cay(R,S))}R$'' in the definition of the set.
\end{corollary}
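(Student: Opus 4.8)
The plan is to derive Corollary~\ref{cor} directly from the combination of Theorem~\ref{main1} and Theorem~\ref{main2} by a simple case split on the relative size of $N$ inside $R$. Write $r = |R|$ and $n = |N|$. Since $N$ is a non-identity proper normal subgroup, we have $2 \le n \le r/2$, and the quantity $n \cdot |R:N| = r$ forces at least one of $n$ and $|R:N|$ to be at least $\sqrt{r}$. So we split into the two cases $n \ge \sqrt{r}$ and $|R:N| \ge \sqrt{r}$ (if both hold, either case works); in each case the set under consideration in the corollary is a subset of the set bounded in one of the two theorems, so the corresponding cardinality bound applies.

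In the case $n = |N| \ge \sqrt{|R|}$, note that the set in Corollary~\ref{cor} is contained in the set appearing in Theorem~\ref{main1} (it has the extra constraint that $f$ fixes each $N$-orbit setwise, which only shrinks the set). Theorem~\ref{main1} gives a bound of $2^{\mathbf{c}(R) - |N|/96 + 2\log_2|R| + (\log_2|R|)^2 + 3}$, and since $|N| \ge \sqrt{|R|}$ we have $-|N|/96 \le -\sqrt{|R|}/96 \le -\sqrt{|R|}/192$, so this is at most $2^{\mathbf{c}(R) - \sqrt{|R|}/192 + 2\log_2|R| + (\log_2|R|)^2 + 3}$, which is exactly the claimed bound. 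In the case $|R:N| = |R|/|N| \ge \sqrt{|R|}$, we instead invoke Theorem~\ref{main2}, which bounds the (same) set by $2^{\mathbf{c}(R) - |R|/(192|N|) + (\log_2|R|)^2 + 3}$; since $|R|/|N| \ge \sqrt{|R|}$ we get $-|R|/(192|N|) \le -\sqrt{|R|}/192$, so this is at most $2^{\mathbf{c}(R) - \sqrt{|R|}/192 + (\log_2|R|)^2 + 3} \le 2^{\mathbf{c}(R) - \sqrt{|R|}/192 + 2\log_2|R| + (\log_2|R|)^2 + 3}$. In both cases we obtain the stated bound, completing the argument.

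The final sentence of the corollary, dropping the normaliser condition $R = \norm{\Aut(\Cay(R,S))}{R}$ when $R$ is neither abelian of exponent greater than $2$ nor generalised dicyclic, transfers verbatim from the corresponding statements in Theorem~\ref{main1} and Theorem~\ref{main2}, since in each of the two cases above we simply quote whichever theorem applies, and each of them carries exactly this same addendum. There is essentially no obstacle here: the only things to be careful about are that $n$ and $|R:N|$ genuinely cannot both be less than $\sqrt{|R|}$ (which is immediate from $n\cdot|R:N| = |R|$), and that the extra $2\log_2|R|$ term present in the Theorem~\ref{main1} bound but absent from the Theorem~\ref{main2} bound is harmless because adding a nonnegative quantity to the exponent only weakens the inequality. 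The main (and only) subtlety worth a sentence in the write-up is noting that the set in the corollary is contained in the sets of both theorems, so that whichever bound we pick is legitimately an upper bound for the cardinality in question.
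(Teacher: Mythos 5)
Your proposal is correct and is exactly the argument the paper intends: the authors introduce the corollary with the single remark that it follows ``by distinguishing the cases $|N|\ge\sqrt{|R|}$ and $|R:N|\ge\sqrt{|R|}$,'' which is precisely your case split, applying Theorem~\ref{main1} in the first case and Theorem~\ref{main2} in the second. Your additional observations --- that the corollary's set is contained in the set of Theorem~\ref{main1} and coincides with that of Theorem~\ref{main2}, and that the extra $2\log_2|R|$ in the exponent only weakens the bound --- are the right details to check and are all accurate.
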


Prior to launching into the pieces of the proof mentioned above, we provide some additional background and introductory material.

\subsection{General notation}\label{section:notation}
\begin{definition}\label{defeq:1}
{\rm 
Given a finite group $R$ and $x\in R$, we let $o(x)$ denote the order of the element $x$ and we let $${\bf I}(R):=\{x\in R\mid o(x)\le 2\}$$ be the set of elements of $R$ having order at most $2$. Given a subset $X$ of $R$, we write ${\bf I}(X):=X\cap {\bf I}(R)$. Given an inverse-closed subset $X$ of $R$, we let $$\mathbf{c}(X):=\frac{|X|+|{\bf I}(X)|}{2}.$$ 
}
\end{definition}

\begin{definition}\label{defeq:2}{\rm
Let $A$ be an abelian group of even order and of exponent greater than $2$, and let $y$ be an involution of $A$. The generalised dicyclic group $\Dic(A, y, x)$ is the group $\langle A, x\mid x^2=y, a^x=a^{-1},\forall a\in A\rangle$. A group is called generalised dicyclic if it is isomorphic to some $\Dic(A, y, x)$. When $A$ is cyclic, $\Dic(A, y, x)$ is called a dicyclic or generalised quaternion group.

We let $\bar{\iota}_A:\Dic(A,y,x)\to \Dic(A,y,x)$ be the mapping defined by $(ax)^{\bar{\iota}_A}=ax^{-1}$ and $a^{\bar{\iota}_A}=a$, for every $a\in A$. In particular, $\bar{\iota}_A$ is an automorphism of $\Dic(A,y,x)$.
The role of the label ``$A$'' in $\bar{\iota}_A$ seems unnecessary, however we use this label to stress one important fact. An abstract group $R$ might be isomorphic to $\Dic(A,y,x)$, for various choices of $A$. Therefore, since the automorphism $\bar{\iota}_A$ depends on $A$ and since we might have more than one choice of $A$, we prefer a notation that emphasizes this fact.

It follows from~\cite[Section~$2.1$ and~4]{MSV} that, if $D=\Dic(A,x,y)$ is generalized dicyclic over $A$, then either $A$ is characteristic in $D$, or $D\cong Q_8\times C_2^\ell$ for some $\ell\in \mathbb{N}$. In particular, when $D$ is not isomorphic to $Q_8\times C_2^\ell$, the automorphism $\bar{\iota}_A$ is uniquely determined by $D$.

When $D= Q_8\times C_2^\ell$, the group $D$ is generalized dicyclic over three distinct abelian subgroups; namely, if $Q_8=\langle i,j\rangle$, then $D$ is generalized dicyclic over $\langle i\rangle\times C_2^\ell$, $\langle j\rangle\times C_2^\ell$ and $\langle ij\rangle\times C_2^\ell$. In particular, we have three distinct options for the automorphism $\bar{\iota}_A$: one for each of these abelian subgroups. For simplicity, we denote by $\bar{\iota}_i,\bar{\iota}_j$ and $\bar{\iota}_k$ the corresponding automorphisms. It is not hard to check that $\bar{\iota}_k=\bar{\iota}_i\bar{\iota}_j$ and hence $\langle \bar{\iota}_i,\bar{\iota}_j\rangle$ is elementary abelian of order $4$.}
\end{definition}

\begin{definition}\label{defeq:2_2_2}
{\rm Let $A$ be an abelian group. We let $\iota_A:A\to A$ denote the automorphism of $A$ defined by $x^{\iota_A}=x^{-1}$ $\forall x\in A$. Very often, we drop the label $A$ from $\iota_A$ because this should cause no confusion.}
\end{definition}

In what follows we use the following facts repeatedly.
\begin{remark}\label{rem : 1}{\rm 
Let $X$ be a finite group. Since a chain of subgroups of $X$ has length at most $\log_2(|X|)$, $X$ has a generating set of cardinality at most $\lfloor \log_2(|X|)\rfloor\le \log_2(|X|)$.

Any automorphism of $X$ is uniquely determined by its action on the elements of a generating set for $X$. Therefore $|\Aut(X)|\le |X|^{\lfloor\log_2(|X|)\rfloor}\le 2^{(\log_2(|X|))^2}$. }
\end{remark}

\begin{lemma}\label{lemma111}Let $R$ be a finite group and let $X$ be an inverse-closed subset of $X$. The number of inverse-closed subsets $S$ of $X$ is $2^{\mathbf{c}(X)}.$ In particular, $R$ has $2^{\mathbf{c}(R)}$ inverse-closed subsets. 
\end{lemma}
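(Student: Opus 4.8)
The plan is to count the inverse-closed subsets of $X$ directly, by organising $X$ according to the action of the inversion map. Since $X$ is inverse-closed, $x\mapsto x^{-1}$ restricts to a permutation of $X$, and in fact to an involution; its fixed points are precisely the elements satisfying $x=x^{-1}$, that is, the elements of order at most $2$, which form the set ${\bf I}(X)$. Every other element of $X$ lies in a $2$-element orbit $\{x,x^{-1}\}$ with $x\neq x^{-1}$, and because $X$ is inverse-closed such a pair lies entirely inside $X$. Hence $X$ is partitioned into $|{\bf I}(X)|$ singleton orbits and $(|X|-|{\bf I}(X)|)/2$ orbits of size $2$.

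Next I would observe that a subset $S\subseteq X$ is inverse-closed if and only if it is a union of inversion-orbits. Thus to specify an inverse-closed $S$ we make, independently: for each $x\in{\bf I}(X)$, a binary choice of whether $x\in S$; and for each pair $\{x,x^{-1}\}$ with $x\neq x^{-1}$, a binary choice of whether both of $x,x^{-1}$ lie in $S$ or neither does. Multiplying the number of independent choices gives
$$2^{|{\bf I}(X)|}\cdot 2^{(|X|-|{\bf I}(X)|)/2}=2^{(|X|+|{\bf I}(X)|)/2}=2^{\mathbf{c}(X)},$$
where the last equality is the definition of $\mathbf{c}(X)$ in Definition~\ref{defeq:1}. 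Taking $X=R$, which is trivially inverse-closed, yields the final assertion that $R$ has $2^{\mathbf{c}(R)}$ inverse-closed subsets.

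There is essentially no obstacle here: the only point needing a moment's care is that the hypothesis that $X$ is inverse-closed is exactly what makes inversion a permutation of $X$, so that the orbit decomposition is available and each $2$-element orbit is a genuine subset of $X$. Once that is noted, the count is immediate.
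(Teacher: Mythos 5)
Your argument is correct and is essentially the paper's own proof: both decompose $X$ into the elements of ${\bf I}(X)$ (chosen freely) and the remaining elements paired with their inverses (chosen in pairs), giving $2^{|{\bf I}(X)|}\cdot 2^{(|X|-|{\bf I}(X)|)/2}=2^{\mathbf{c}(X)}$. Phrasing the decomposition as orbits of the inversion involution is a cosmetic difference only.
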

\begin{proof}
 Given an arbitrary inverse-closed subset $S$ of $X$, $S\cap {\bf I}(X)$ is an arbitrary subset of ${\bf I}(X)$ whereas in $S\cap (X\setminus {\bf I}(X))$ the elements come in pairs, where each element is paired up to its inverse. Thus the number of inverse-closed subsets of $X$ is $$2^{|{\bf I}(X)|}\cdot 2^{\frac{|X\setminus {\bf I}(X)|}{2}}=2^{{\bf c}(X)}.$$
The last statement follows using $X=R$.
\end{proof}

The following important results by the third author deal with the case where there is a  graph automorphism that is also a group automorphism of $R$.

\begin{theorem}[\cite{spiga11}, Lemma $2.7$]\label{l:aut}Let $R$ be a finite group and let $\varphi$ be a non-identity automorphism of $R$. Then, one of the following holds
\begin{enumerate}
\item\label{eq:aut00} the number of $\varphi$-invariant inverse-closed subsets of $R$ is at most $2^{\mathbf{c}(R)-\frac{|R|}{96}}$,
\item\label{eq:aut01}$\cent R\varphi$ is abelian of exponent greater than $2$ and has index $2$ in $R$, $R$ is a generalized dicyclic group over $\cent R\varphi$ and $\varphi=\bar{\iota}_{\cent R\varphi}$,
\item\label{eq:aut02}$R$ is abelian of exponent greater than $2$ and $\varphi$ is the automorphism of $R$ mapping each element to its inverse.
\end{enumerate} 
\end{theorem}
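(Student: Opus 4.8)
The plan is to turn the counting problem into a statement about orbits of $\langle\varphi,\iota\rangle$, where $\iota\colon g\mapsto g^{-1}$ is the inversion map of $R$, and then to isolate a purely group-theoretic classification.

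First I would set up the dictionary. Since $\iota$ commutes with $\varphi$, an inverse-closed subset of $R$ is precisely a union of $\langle\iota\rangle$-orbits, which reproves Lemma~\ref{lemma111} and identifies $\mathbf{c}(R)$ with the number of $\langle\iota\rangle$-orbits on $R$. A $\varphi$-invariant inverse-closed subset is then exactly a union of $\langle\varphi,\iota\rangle$-orbits on $R$, so the number of $\varphi$-invariant inverse-closed subsets of $R$ equals $2^{k}$, where $k$ is the number of $\langle\varphi,\iota\rangle$-orbits on $R$. Thus alternative (1) of the theorem is the inequality $k\le\mathbf{c}(R)-|R|/96$.

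Next I would bound the ``deficiency'' $\mathbf{c}(R)-k$ from below. Grouping the $\langle\iota\rangle$-orbits according to which $\langle\varphi,\iota\rangle$-orbit contains them, the deficiency is $\sum(t-1)$ over the $\langle\varphi,\iota\rangle$-orbits, where $t$ is the number of $\langle\iota\rangle$-orbits in a given one; only summands with $t\ge2$ contribute, each by at least $t/2$. A $\langle\iota\rangle$-orbit $\{g,g^{-1}\}$ has $t=1$ exactly when $g^\varphi\in\{g,g^{-1}\}$, that is, when $g$ lies in the inverse-closed set $C:=\cent R\varphi\cup\mathrm{Inv}(\varphi)$, where $\mathrm{Inv}(\varphi):=\{g\in R:g^\varphi=g^{-1}\}$. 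Since $R\setminus C$ is inverse-closed it splits into at least $|R\setminus C|/2$ distinct $\langle\iota\rangle$-orbits, all with $t\ge2$; summing their contributions gives $\mathbf{c}(R)-k\ge\tfrac12\cdot\tfrac12|R\setminus C|=|R\setminus C|/4$. Hence alternative (1) holds as soon as $|C|\le\tfrac{23}{24}|R|$, and it remains to show that if $|C|>\tfrac{23}{24}|R|$ then $\varphi$ is one of the two automorphisms in alternatives (2) and (3).

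This classification is the heart of the matter, and where I expect the real difficulty. The line of argument I would attempt: since $\mathrm{Inv}(\varphi)\subseteq\cent R{\varphi^2}$ (if $g^\varphi=g^{-1}$ then $g^{\varphi^2}=g$) and $\cent R\varphi$ is a subgroup, the bound $|C|>\tfrac{23}{24}|R|$ should force $H:=\cent R\varphi$ to have index $1$ or $2$ in $R$ and $\varphi^2=1$ (index $1$ meaning $\varphi=1$, which is excluded; for the index bound one uses that when $\varphi^2=1$ the map $g\mapsto g^{-1}g^\varphi$ sends $R$ onto a subset of $\mathrm{Inv}(\varphi)$ with fibres the cosets of $H$, controlling $|\mathrm{Inv}(\varphi)|$ from both sides). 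Then, fixing $h\in\mathrm{Inv}(\varphi)\setminus H$, so that $R=H\cup hH$ and $h^2\ne1$, one checks that for $g\in H$ one has $hg\in C$ if and only if $hgh^{-1}=g^{-1}$; as $C\cap hH$ is then forced to be nearly all of $hH$, the (suitably adapted) classical fact that a group most of whose elements are inverted by a given automorphism is abelian, applied to conjugation by $h$, yields that conjugation by $h$ inverts $H$ and that $H$ is abelian. A final split on the exponent of $H$ completes the proof: if $H$ has exponent greater than $2$ then $h^2$ is an involution of $H$, $R=\Dic(H,h^2,h)$ is generalised dicyclic over $H$, and $\varphi=\bar\iota_H$, which is alternative (2); if $H$ has exponent at most $2$ then $R=\langle H,h\rangle$ is abelian, necessarily of exponent greater than $2$ (else $\varphi=1$), and $\varphi$ is the inversion map, which is alternative (3). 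The delicate points — and the reason this step is not routine — are extracting from ``$|C|$ large'' a bound on $|\mathrm{Inv}(\varphi)|$ sharp enough to pin down $[R:H]$ and $\varphi^2=1$, correctly handling the degenerate case in which $H$ has exponent at most $2$ (where $\mathrm{Inv}(\varphi)$ and $H$ overlap atypically) without losing control of whether $R$ is abelian, and verifying that the automorphism obtained is exactly $\bar\iota_H$ (respectively inversion), not merely some automorphism sharing its fixed-point subgroup. The constant $96=4\cdot24$ is exactly the factor $4$ lost in the orbit estimate times the threshold $24$ at which this classification engages.
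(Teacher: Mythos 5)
First, a structural note: the paper does not actually prove Theorem~\ref{l:aut}; it is imported verbatim from \cite[Lemma~2.7]{spiga11}, so there is no in-paper argument to measure your proposal against, and I can only assess it on its own terms. Your first half is correct and complete: identifying the $\varphi$-invariant inverse-closed subsets with unions of $\langle\varphi,\iota\rangle$-orbits, and showing that the orbit count falls below $\mathbf{c}(R)-|R|/96$ unless $|C|>\tfrac{23}{24}|R|$, where $C:=\cent R\varphi\cup\mathrm{Inv}(\varphi)$, is a clean reduction and correctly explains the constant $96=4\cdot 24$. The deduction $\varphi^2=1$ from $C\subseteq \cent R{\varphi^2}$ is also fine.

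The classification step, however, rests on a false claim. You assert that $|C|>\tfrac{23}{24}|R|$ forces $H:=\cent R\varphi$ to have index at most $2$, and you then derive \emph{both} conclusions~\eqref{eq:aut01} and~\eqref{eq:aut02} from the resulting structure $R=H\cup hH$. Take $R=C_3$, or any abelian group of odd exponent greater than $2$, with $\varphi=\iota$ the inversion map: then $\mathrm{Inv}(\varphi)=R$, so $C=R$ and your hypothesis holds, yet $H=\{1\}$ has index $|R|$. (One checks directly that conclusion~\eqref{eq:aut00} genuinely fails here: for $C_3$ every one of the $2^{\mathbf{c}(R)}=4$ inverse-closed subsets is $\varphi$-invariant, exceeding $2^{2-1/32}$.) This is precisely an instance of conclusion~\eqref{eq:aut02}, and your endgame cannot reach it: the only abelian groups it produces are those generated by an exponent-$2$ subgroup together with a single further element. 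The arithmetic does not support the index claim either: if $|\mathrm{Inv}(\varphi)|\le\tfrac34|R|$ one gets only $|H|>|C|-\tfrac34|R|>\tfrac{5}{24}|R|$, hence $[R:H]\le 4$; and if $|\mathrm{Inv}(\varphi)|>\tfrac34|R|$ there is no control on $[R:H]$ at all.

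The natural repair is to split first on $|\mathrm{Inv}(\varphi)|$. If $|\mathrm{Inv}(\varphi)|>\tfrac34|R|$, then Liebeck--MacHale \cite{LMac} gives that $R$ is abelian and, since $\mathrm{Inv}(\varphi)$ is then a subgroup of index $1$, that $\varphi=\iota$; excluding $\varphi=1$ forces exponent greater than $2$, which is conclusion~\eqref{eq:aut02}. Otherwise $[R:H]\le 4$, and one must still do real work to exclude indices $3$ and $4$ before your coset analysis (which is essentially sound once $[R:H]=2$ is in hand, including the identification of $\varphi$ with $\bar{\iota}_H$) can take over and deliver conclusion~\eqref{eq:aut01}. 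As written, the proposal is not a correct proof.
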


\begin{proposition}[\cite{spiga11}, Proposition $2.8$]\label{propo:aut}Let $R$ be a finite group and suppose that $R$ is not an abelian group of exponent greater than $2$ and that $R$ is not a generalized dicyclic group. Then the set
$$\{S\subseteq R\mid S=S^{-1}, R<\norm{\Aut(\Cay(R,S))}{R}\}$$
has cardinality at most $2^{\mathbf{c}(R)-|R|/96+(\log_2|R|)^2}$.
\end{proposition}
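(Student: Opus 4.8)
The plan is to translate the condition $R<\norm{\Aut(\Cay(R,S))}{R}$ into a purely group-theoretic statement about automorphisms of $R$ fixing the connection set, and then to bound the resulting family by a union bound built on Theorem~\ref{l:aut}. First I would recall the standard description of the normalizer of the (right) regular representation inside the full symmetric group: the point stabilizer of the identity vertex $1$ in $\norm{\Sym(R)}{R}$ consists exactly of the group automorphisms of $R$. Concretely, if $g\in\norm{\Aut(\Cay(R,S))}{R}$ fixes $1$, then $g$ is a group automorphism $\varphi$ of $R$; and for $\varphi\in\Aut(R)$ one checks directly that $\{a,b\}$ is an edge iff $\varphi(b)\varphi(a)^{-1}=\varphi(ba^{-1})\in S^\varphi$, so $\varphi$ preserves the edge set precisely when $S^\varphi=S$. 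Since $R$ is transitive and $g$ normalizes $R$, preserving the edges at $1$ forces $\varphi$ to be a graph automorphism. Hence
$$\norm{\Aut(\Cay(R,S))}{R}=R\rtimes\{\varphi\in\Aut(R)\mid S^\varphi=S\},$$
and therefore $R<\norm{\Aut(\Cay(R,S))}{R}$ holds if and only if $S$ is invariant under some non-identity automorphism of $R$.

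With this identification in place, the set to be bounded is
$$\bigcup_{\varphi\in\Aut(R)\setminus\{1\}}\{S\subseteq R\mid S=S^{-1},\,S^\varphi=S\},$$
the union, over non-identity automorphisms $\varphi$, of the families of $\varphi$-invariant inverse-closed subsets. Next I would invoke Theorem~\ref{l:aut} for each fixed $\varphi\neq1$. The hypothesis that $R$ is neither abelian of exponent greater than $2$ nor generalized dicyclic rules out alternatives~(\ref{eq:aut01}) and~(\ref{eq:aut02}) of that theorem \emph{uniformly}, that is, for every choice of $\varphi$: alternative~(\ref{eq:aut01}) forces $R$ to be generalized dicyclic, and alternative~(\ref{eq:aut02}) forces $R$ to be abelian of exponent greater than $2$, both of which are excluded. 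Consequently alternative~(\ref{eq:aut00}) must hold for each $\varphi\neq1$, so every set $\{S\mid S=S^{-1},\,S^\varphi=S\}$ has at most $2^{\mathbf{c}(R)-|R|/96}$ elements.

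Finally I would apply the union bound. There are fewer than $|\Aut(R)|$ non-identity automorphisms, and by Remark~\ref{rem : 1} we have $|\Aut(R)|\le 2^{(\log_2|R|)^2}$. Hence the cardinality of the set is at most
$$|\Aut(R)|\cdot 2^{\mathbf{c}(R)-|R|/96}\le 2^{(\log_2|R|)^2}\cdot 2^{\mathbf{c}(R)-|R|/96}=2^{\mathbf{c}(R)-|R|/96+(\log_2|R|)^2},$$
as required. The genuine content is entirely packaged in Theorem~\ref{l:aut}, so the proof is essentially an assembly: the part where I would be most careful is the first step, both in cleanly establishing the normalizer identification (a standard but easily mishandled holomorph computation) and in confirming that alternatives~(\ref{eq:aut01})--(\ref{eq:aut02}) are excluded simultaneously for all $\varphi$ — otherwise the crucial $|R|/96$ saving would fail to apply to some term of the union and the union bound would collapse.
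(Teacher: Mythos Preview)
Your proof is correct. The paper does not actually prove this proposition here---it is quoted from~\cite{spiga11} without proof---but your argument is precisely the intended one: identify the stabilizer of $1$ in $\norm{\Aut(\Cay(R,S))}{R}$ with the group of automorphisms of $R$ fixing $S$, observe that the hypotheses on $R$ exclude alternatives~\eqref{eq:aut01} and~\eqref{eq:aut02} of Theorem~\ref{l:aut} for every non-identity $\varphi\in\Aut(R)$, and then take a union bound using the estimate $|\Aut(R)|\le 2^{(\log_2|R|)^2}$ from Remark~\ref{rem : 1}.
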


\begin{notation}\label{Subdivision}
With $R$ a finite group that is neither abelian of exponent greater than $2$ nor generalised dicyclic, we define
$$\mathcal S_N=\{S\subseteq R\mid S=S^{-1},\,  \exists f\in \norm{\Aut(\Cay(R,S))}{N}\textrm{ with }f\ne 1 \textrm{ and }1^f=1\},$$ so that $|\mathcal S_N|$ is a value we aim to bound to prove Theorem~$\ref{main1}$.
We divide $\mathcal S_N$ into three subsets:
\begin{align*}
\mathcal{S}_N^1&:=\{S\in \mathcal{S}_N\mid R<\norm{\Aut(\Cay(R,S))}{R}\},\\
\mathcal{T}_N&:=\{S\in \mathcal{S}_N\setminus\mathcal{S}_N^1\mid  \exists x\in R\textrm{ and }\exists f\in \norm{\Aut(\Cay(R,S))}{N}\textrm{ with }1^f=1\textrm{ and }x^f\notin \{x,x^{-1}\}\},\\
\mathcal{U}_N&:=\mathcal{S}_N\setminus\mathcal{S}_N^1\setminus\mathcal T_N.
\end{align*}
so $$\mathcal S_N=\mathcal S_N^1 \cup \mathcal T_N \cup \mathcal U_N.$$
\end{notation}

Observe that $$\mathcal U_N=\{S\in \mathcal{S}_N\setminus \mathcal S_N^1 \mid  \forall f\in \norm{\Aut(\Cay(R,S))}{N}\textrm{ with }1^f=1\textrm{ we have }x^f\in \{x,x^{-1}\} \forall x \in R\}.$$

Proposition~\ref{propo:aut} already provides us with a bound for $|\mathcal S_N^1|$. In the next section, we will show that $|\mathcal U_N|=0$.

\section{Graph automorphisms that fix or invert every group element}\label{sec:inversion}

The bulk of this section consists of a long lemma in which we show that if a nontrivial permutation  that fixes or inverts every element of a group exists, then the normaliser of $R$ in the appropriate group is in fact larger than $R$. This means that any connection sets that could arise in $\mathcal U_N$ have actually already arisen in $\mathcal S_N^1$, and therefore do not appear in $\mathcal U_N$.

\begin{lemma}\label{yetanother} Let $G$ be a subgroup of $\Sym(R)$ with $R<G$ and with the property that $r^g\in \{r,r^{-1}\}$, for every $ r\in R$ and for every $g\in G_1$. Then $\norm G R>R$.
\end{lemma}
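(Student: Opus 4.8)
The plan is to analyze the group $G_1$ (the stabilizer of the identity in $G$) under the hypothesis that every $g \in G_1$ satisfies $r^g \in \{r, r^{-1}\}$ for all $r \in R$. First I would observe that each $g \in G_1$ determines a partition of $R$ into two sets: $\Fix(g) = \{r \in R \mid r^g = r\}$ and $\Inv(g) = \{r \in R \mid r^g = r^{-1}\}$ (with elements of order at most $2$ lying in both, so it is cleaner to think of $g$ as determined by which elements of order $>2$ it inverts). The key point is that the composition of two such maps, and the conjugation action of $R$ (acting by right multiplication) on $G_1$, interact in a controlled way. Since $R < G$, either $G_1 \ne 1$ or $R$ is not normal for index reasons; in fact if $G_1 = 1$ then $G$ is regular of order $|R|$ containing $R$ regular, forcing $G = R$, a contradiction — so $G_1 \ne 1$.

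**Producing an automorphism of $R$.** The heart of the argument should be to show that a nontrivial $g \in G_1$ of the prescribed type is actually an automorphism of $R$, or at least that $\langle R, g\rangle$ normalizes $R$. I would take $g \in G_1 \setminus \{1\}$ and examine the map on $R$; the condition $r^g \in \{r, r^{-1}\}$ is exactly the pointwise condition, and I would try to show it is multiplicative. The natural approach: for $r, s \in R$, consider the edge/coset structure — $g$ fixes $1$, and since $g$ is a permutation of $R$ commuting appropriately with the regular action, one checks $(rs)^g$ against $r^g s^g$ by using that $g$ conjugates the right-translation $\rho_s$ to the right-translation $\rho_{s^g}$ (this is the standard fact that $G_1$ normalizes $R$ in its action iff each element of $G_1$ induces an automorphism — but here we do not yet know that). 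Instead I expect the correct route is: the set of $g \in \Sym(R)$ fixing $1$ with $r^g \in \{r, r^{-1}\}$ for all $r$ is finite and closed under the operations forced by $G$ being a group, and one shows directly that such a $g$, composed with inversion on a suitable normal abelian piece, is an automorphism. Concretely, I would let $N_0 = \langle r \in R : o(r) > 2, r^g = r^{-1}\rangle$ and argue that $g$ inverts $N_0$, that $N_0$ is abelian (two inverted generators that don't commute produce a contradiction with $r^g \in \{r,r^{-1}\}$ applied to their product), and that $g$ centralizes a complement-like structure, so that $g = \bar\iota_{N_0}$ or $\iota_{N_0}$-type map — hence $g \in \Aut(R)$, giving $\langle R, g \rangle \le \norm{\Sym(R)}{R}$ and thus $\norm G R > R$.

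**The main obstacle.** The hard part will be ruling out "mixed" behaviour: an element $g \in G_1$ that inverts some elements and fixes others need not respect the multiplication, and one must use the group structure of $G$ — not just the pointwise hypothesis — to control this. The delicate step is showing that the set of inverted elements (of order $>2$) is closed under multiplication and that the inverting/fixing pattern is consistent with a single automorphism; this is where one must exploit that $G_1$ is a group (so products and inverses of such $g$ stay of the prescribed form, forcing strong constraints like: if $r^g = r^{-1}$ and $s^g = s$ then $(rs)^g \in \{rs, s^{-1}r^{-1}\} \cap \{r^{-1}s, \ldots\}$, which pins things down). I would also need to handle the exceptional case $R \cong Q_8 \times C_2^\ell$ separately, since there the inversion-type automorphisms $\bar\iota_i, \bar\iota_j, \bar\iota_k$ are not unique and several patterns are simultaneously realized; Definition~\ref{defeq:2} has been set up precisely to make this bookkeeping possible. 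Once $g$ is identified as one of these automorphisms, the conclusion $\norm G R > R$ is immediate.
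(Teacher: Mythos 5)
Your reduction of the conclusion is correct: since the point stabiliser of $1$ in $\norm{\Sym(R)}{R}$ is exactly $\Aut(R)$, proving $\norm G R>R$ amounts to exhibiting a nontrivial element of $G_1$ that is a group automorphism of $R$. But the route you propose to produce such an element has a genuine gap at its central step. You want to show that a given nontrivial $g\in G_1$ is itself an automorphism, and your argument for this (that $N_0=\langle r\mid o(r)>2,\ r^g=r^{-1}\rangle$ is abelian and inverted by $g$, because ``two inverted generators that don't commute produce a contradiction with $r^g\in\{r,r^{-1}\}$ applied to their product'') presupposes that $(rs)^g$ is controlled by $r^g$ and $s^g$. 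It is not: $g$ is only a permutation of the vertex set, and the hypothesis says nothing about how $g$ acts on products. The only leverage the group structure of $G$ gives you is of the form $g^{-1}\rho_r g\in \rho_{r}G_1\cup\rho_{r^{-1}}G_1$ for the right translations $\rho_r$, and composing two such memberships does not pin down $(rs)^g$; this is precisely the ``mixed behaviour'' you flag as the main obstacle, and the proposal offers no mechanism to overcome it. Indeed, the statement you are aiming for --- that \emph{every} nontrivial $g\in G_1$ of this type is an automorphism --- is stronger than the lemma and is not what the paper establishes; the lemma only asserts that \emph{some} nontrivial element of $G_1$ normalises $R$.

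The paper's actual proof is structured quite differently and indicates why a local argument about a single $g$ is unlikely to close the gap. It takes a minimal counterexample with $R=\norm GR$, shows $R$ is maximal in $G$, and then runs a long case analysis on the core $K$ of $R$ in $G$. The situations where an element of $G_1$ acts on a subgroup by honest conjugation (so that Theorem~\ref{l:aut} applies and forces abelian/generalised dicyclic structure) arise only for \emph{normal} subgroups such as $K$, where the point action and the conjugation action coincide --- exactly the feature unavailable for $R$ itself before the lemma is proved. After reducing to $K=1$, the proof views $G$ as a primitive group on $G\backslash R$ in which $G_1$ is an elementary abelian regular subgroup, and invokes Li's classification of primitive groups containing an abelian regular subgroup, together with results of Caranti--Dalla Volta--Sala, the classification from~\cite{DSV}, and Clifford theory, to eliminate the affine and product-action cases. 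None of this machinery is replaceable by the elementary closure argument you sketch, and the exceptional families ($Q_8\times C_2^\ell$, generalised dicyclic, generalised dihedral quotients) that you mention only in passing are exactly where the paper has to work hardest.
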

	\begin{proof}
		We argue by contradiction and, among all groups satisfying the hypothesis of this lemma, we
		choose $G$ with $|R||G|$ as small as possible and with $$R=\norm G R.$$
		In this proof, we denote by $r^g$ the image of the point $r\in R$ via the permutation $g$ and we denote by $r^{\iota_g}:=g^{-1}rg$ the conjugation of $r$ via $g$.

		Let $M$ be a subgroup of $G$ with $R<M$. For every $r\in R$ and for every $x\in M_1=M\cap G_1,$ $r^x\in \{r,r^{-1}\}$, and, from the modular law, $$R=M\cap R=M\cap \norm GR=\norm MR.$$ Therefore, by the minimality of our counterexample, we get $M=G$. As $M$ was an arbitrary subgroup of $G$ with $R<M$, we deduce 
		\begin{equation}\label{selfnormalizing1}R\textrm{ is a maximal subgroup of }G.
		\end{equation}

		Let $K$ be the core of $R$ in $G$, that is, $K:=\bigcap_{g\in G}R^g$. 
		
		We claim that	\begin{equation}\label{selfnormalizing2}
		\textrm{the core of }R \textrm{ in }G \textrm{ is }1.
		\end{equation}

		To prove this claim we argue by contradiction and we suppose that $K\ne 1$. Let $\bar{G}$ be the permutation group induced by $G$ on the action on $K$-orbits. Moreover, we let $\bar{\,}:G\to \bar{G}$ denote the natural projection.
		
		Let $H$ be the kernel of $\,\bar{\,}$. Thus $H$ is the largest subgroup of $G$ fixing each $K$-orbit setwise and $H\le G_1K$. Since $R$ is a maximal subgroup of $G$ and $R\le RH\le G$, we have that either $R=RH$ or $G=RH$.
		
		 In the first case, $H\le R$ and, since $H\le G_1K$, from the modular law we obtain $H\le R\cap G_1K=(R\cap G_1)K=K$, that is, $H=K$. Moreover, as $H=K\le R$, we have $\bar{R}=\norm{\bar{G}}{\bar{R}}$.
		Now, $\bar{R}$ is a regular subgroup of $\bar{G}\le \Sym(\bar{R})$ and, for every $\bar{r}\in \bar{R}$ and for every $\bar{g}\in \bar{G}_1$, we have $\bar{r}^{\bar{g}}\in \{\bar{r},\bar{r}^{-1}\}$. 
	Using our assumption that $K \neq 1$,  we get that $|\bar{R}|<|R|$, and by
		 the minimality of our couterexample we have that $\bar{G}=G/K=R/K=\bar{R}$. That is, $G=R$ contradicting the fact that $R$ is a proper subgroup of $G$.
		
		So the second case holds, and $G=RH$, so $G_1$ acts trivially on $K$-orbits. In other words, $G_1$ fixes each $K$-orbit setwise. Thus $H=KG_1$, and consequently
%
		\begin{equation}\label{eq:day1}
		KG_1\unlhd G.
		\end{equation}

		Suppose there exist $x\in G_1$ and $r\in R$ such that $r^x=r^{-1}$ and $o(rK)\ge 3$. Then $r^x=r^{-1}\in r^{-1}K=(rK)^{-1}\ne rK$, contradicting the fact that $G_1$ fixes each $K$-orbit. This shows that, 
		\begin{equation}\label{allinvolutions}
		\textrm{for every }x\in G_1\textrm{ and for every }r\in R\textrm{ either }r^x=r\textrm{ or }o(rK)\le 2.
		\end{equation}
		
		Let $L$ be  the subgroup of $R$ fixed pointwise by $G_1$, that is, $L:=\{r\in R\mid G_r=G_1\}$. (The set $L$ is indeed a subgroup of $R$, because  it is a block of imprimitivity for the action of $G$ on $R$ containing the point $1$.) Clearly, $L<R$, because $G_1\ne 1$.
		Now, from~\eqref{allinvolutions}, we deduce that, for every $r\in R\setminus L$, $o(rK)\le 2$. Hence, 	\begin{align}\label{inv} 		\textrm{every element in }	\;\frac{R}{K}\setminus \frac{KL}{K} \;	\;	\textrm{is an involution}.	\end{align}

		Now, by (\ref{inv}), we must have $\langle xK\in R/K \mid x^2\notin K\rangle\le L/K$. 
		Since either $|R/K:\langle xK\in R/K \mid x^2\notin K\rangle|=2$ or $R/K$ is a 2-group, we deduce that one of the following holds
		\begin{enumerate}
			\item\label{case1pablo} $R/K$ is an elementary abelian $2$-group,
			\item\label{case2pablo} $R=KL$,
			\item\label{case3pablo} $|R:KL|=2$ and every element in $R/K\setminus KL/K$ is an involution.
		\end{enumerate}
		In what follows, we analyze these three alternatives.
		
		\smallskip
		
		\noindent\textsc{Case~\eqref{case1pablo}} 
		
		\smallskip
		
		\noindent Since $R/K$ and $G_1$ are elementary abelian $2$-groups, we deduce that $G/K$ is a $2$-group. From $R/K<G/K$, it follows that $\norm {G/K}{R/K}>R/K$. So $\norm G R>R$, but this contradicts our choice of $G$ and $R$.

		\smallskip
		
		\noindent\textsc{Case~\eqref{case2pablo}} 
		
		\smallskip
		
		\noindent Let $f\in G_1$ with $f\ne 1$. Now, as $G_1$ normalizes $K$, the action of $f$ on the points in $K$ coincides with the action of $f$ by conjugation on $K$. Thus, $k^{\iota_f}=k^f\in \{k,k^{-1}\}$, for every $k\in K$. In particular, $\iota_f$ is a non-trivial automorphism of $K$ with the property that it maps each element to itself or to its inverse (so every inverse-closed subset of $K$ is invariant under $\iota_f$). 
		Therefore using Theorem~\ref{l:aut} only one of the following holds true: 
		\begin{itemize}
			\item $K$ is abelian of exponent greater than $2$ and $\iota_f=\iota$ is the automorphism inverting each element of $K$,
			\item $K$ is generalised dicyclic over an abelian subgroup $A$ of exponent greater than $2$ and $\iota_{f}=\bar{\iota}_A$, 
			\item $K\cong Q_8\times C_2^\ell$, for some $\ell\ge 0$, and $\iota_{f}\in\{\bar{\iota}_i,\bar{\iota}_j,\bar{\iota}_k\}$.
		\end{itemize}
		Since $R=KL$ and since $G_1$ fixes $L$ pointwise, the action of $g\in G_1$ on $R$ is uniquely determined once the action of $g$ on $K$ is determined. Since we have at most four choices for the action of $g\in G_1$ on $K$, we deduce that $|G_1|$ divides $4$. If $|G_1|=2$, then $|G:R|=2$ and hence $R\unlhd G$, which  contradicts $R=\norm G R$. Thus $4=|G_1|=|G:R|$ and $K\cong Q_8\times C_2^\ell$, for some $\ell\ge 0$. 
		
		Since $|G:R|=4$, the transitive action of $G$ on the right cosets of $R$ gives rise to a permutation group of degree $4$ and hence $G/K$ is isomorphic to a transitive subgroup of $\Sym(4)$. As $R/K=\norm{G/K}{R/K}$, we deduce that  $G/K$ is isomorphic to either $\Sym(4)$ or $\Alt(4)$. 
		
		If $R/K$ were a $2$-group, we reach a contradiction using the same argument as in Case ~\eqref{case1pablo}. So $R/K$ is a maximal subgroup of $G/K$ which is not a $2$-group, hence $R/K$ isomorphic to either $\Sym(3)$ or $\Alt(3)$. 
		
		Let $C$ be a Sylow $3$-subgroup of $R$. Thus $C=\langle c\rangle$ is a cyclic group of order 3. Since $K$ is a $2$-group and $R=KL$,  replacing $C$ by a suitable $R$-conjugate, from Sylow's theorem, we can assume that $C\le L$.
		 Let $k\in K$ with $k\notin L$. As $k$ is not fixed by each element of $G_1$, there exists  $x\in G_1$ such that $k^x=k^{-1}\ne k$. 
		Now, as $c^{x^{-1}}=c$, we obtain
		 \begin{align} \label{eq:repeat}
(ck)^x=c^{kx}=c^{x^{-1}kx}=c^{k^{\iota_x}}=c^{k^{-1}}=ck^{-1}.
		\end{align}
		On the other hand, $(ck)^x\in \{ck,(ck)^{-1}\}$. If $(ck)^x=ck$, then we deduce $k=k^{-1}$, contradicting the fact that $k^x\ne k$. If $(ck)^x=(ck)^{-1}$, we deduce $k^{-1}c^{-1}=ck^{-1}$ and hence  $k^{-1}=ck^{-1}c=c^2(k^{-1})^{\iota_c}$. Again we obtain a contradiction because $k$ and $k^{\iota_c}$ belong to $K$ but $c^2\notin K$.

		\smallskip
		
		\noindent\textsc{Case~\eqref{case3pablo}} 
		
		\smallskip
		
		\noindent Before proceeding with this case, we collect some information on $G/K$. Observe that in this case, $R/K$ is a generalized dihedral group over the abelian group $KL/K$. Consider the set $\Omega$ of the right cosets of $R/K$ in $G/K$. By~\eqref{selfnormalizing1} $R/K$ is a maximal subgroup of $G/K$. So $G/K$ is a primitive permutation with generalised dihedral point stabilisers.
		
		 These groups were classified in~\cite[Lemma~$2.2$]{DSV}. Using this and the fact that $G_1$ is $2$-elementary abelian group, the only possibility that can occur is that $G/K$ is a primitive group of affine type of degree $|R:K|=|G_1|$.
%
		Since $G=G_1R$ and $R\cap G_1=1$,  $G_1K/K$ acts regularly on $\Omega$.
		Moreover, as $KG_1\unlhd G$ by~\eqref{eq:day1}, $G_1K/K$ is the socle of $G/K$. Since every element of $G_1$ is an involution (it fixes or inverts each element of $R$), then $G_1K/K$ is an elementary abelian $2$-group.

		 Now, $R/K$ acts by conjugation irreducibly as a linear group over the elementary abelian $2$-group $G_1K/K.$ Let $\ell K\in LK/K\setminus \{K\}.$
		 Since $LK/K$ is abelian, then $\textbf{C}_{G_1K/K}(\ell K)=\{aK\in G_1K/K \mid \ell^{-1}a\ell K=aK \}$ is stable under the conjugation by $uK$, for every $uK\in LK/K.$ Further, since $R/K=\langle rK, LK/K\rangle$ , where  $rK=r^{-1}K$, and $r^{-1}\ell rK=\ell^{-1} K,$ for every $\ell K\in LK/K,$ then $\textbf{C}_{G_1K/K}(\ell K)$ is stable under the conjugation by $x K$. In other words, we proved that $\textbf{C}_{G_1K/K}(\ell K)$ is a proper $R$-submodule of the irruducible $R$-module $G_1K/K,$ and consequently $ \textbf{C}_{G_1K/K}(\ell K)$ is trivial. Summing up, $KL/K$ is abelian and $ \textbf{C}_{G_1K/K}(\ell K)$ is trivial for every $\ell K\in LK/K\setminus\{K\}.$
		Thus $KL/K$ is a cyclic group of odd order. Moreover, as the socle $G_1K/K$ has even order, $|KL/K|$ must be odd. We let $t:=|KL/K|$. At this point, the reader might find it useful to consider Figure~\ref{figure111}. Since $KL/K$ is cyclic, there exists $c\in L$ with 
		$\langle c\rangle K=KL$ and with $o(cK)=t$.

		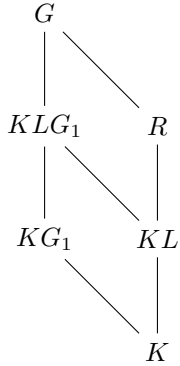
\begin{figure}
			\begin{tikzpicture}[node distance=1.5cm]
			\node(A0){$G$};
			\node(A1)[below of=A0]{$KLG_1$};
			\node(A2)[below of=A1]{$KG_1$};
			\node(A3)[right of=A1]{$R$};
			\node(A4)[below of=A3]{$KL$};
			\node(A5)[below of=A4]{$K$};
			\draw(A0)--(A1);
			\draw(A0)--(A3);
			\draw(A1)--(A2);
			\draw(A1)--(A4);
			\draw(A2)--(A5);
			\draw(A3)--(A4);
			\draw(A4)--(A5);
			\end{tikzpicture}
			\caption{Local structure of $\bar{G}$}\label{figure111}
		\end{figure}

		Suppose now that $K\nleq L$ and let $k\in K\setminus L$. As $k$ is not fixed by each element of $G_1$, there exists $x\in G_1$ with $k^x=k^{-1}\ne k$. 
Now, since  $x$ fixes $c$, we are in position to use the same argument as in  Case~\eqref{case2pablo}. That is (\ref{eq:repeat}) holds, and consequently either $k=k^{-1}$ or $c^2\in K.$ Since $k\ne k^{-1}$ and $o(cK)=t$ is odd, in both cases we get a contradiction.

		We conclude that $K\le L$. (For the proof here, it might be useful again considering Figure~\ref{figure111}.) In particular, $KL=L$. Fix $r\in R\setminus L$. As $|R:L|=2$, we have $R=L\cup rL$.  Now, $LG_1$ fixes $L$ and $rL$ setwise. The action induced by $LG_1$ on $L$ is the regular action of $L$ because $G_1$ fixes $L$ pointwise. As $LG_1\unlhd G$, we must also have that  the action of $LG_1$ on $rL$ is simply the regular action of $L$. In particular, for every $x\in G_1$, there exists $\ell_x\in L$ with the property that
		$$(r\ell)^x=r\ell \ell_x,\,\,\forall \ell\in L.$$
	The set $\{\ell_x\mid x\in G_1\}$ forms a subgroup of $L$, which we denote by $T$. As $G_1$ is elementary abelian, so is $T$. 

		Summing up, we have
		$$\ell^x=\ell,\quad (r\ell)^x=r\ell\ell_x,\,\,\forall x\in G_1,\forall \ell\in L.$$
	Using this and the fact that $T$ is a group we see that, if $x\in G_1$ fixes some point in $rL$, then $\ell_x=1$ and consequently  $x$ fixes all points in $rL$. Further, $x$ fixes all points in $L$, hence $x=1$. Therefore, each element in $G_1\setminus\{1\}$ acts fixed-point-freely on $rL$. Now, let $x\in G_1\setminus\{1\}$. Since $(r\ell)^x\in \{r\ell,(r\ell)^{-1}\}$ for each $\ell\in L$ 
	we deduce that $(r\ell)^{x}=(r\ell)^{-1}$ for every $\ell\in L$. Hence 
 $G_1\setminus\{1\}=\{x\}$. Therefore, $|G_1|=2$ and $|G:R|=2$ contradicting the fact that $\norm GR=R$.

		\smallskip
		We have shown that none of the three alternatives is possible. Therefore, we obtain a contradiction, and the contradiction has arisen from assuming $K\ne 1$.  Hence  $K=1$, which is our original claim (\ref{selfnormalizing2}).

		\medskip
		
		Now, as $R$ is maximal in $G$ and as $R$ is core-free in $G$, we may view $G$ as a primitive permutation group on the set $\Omega=G\backslash R$ of right cosets of $R$ in $G$. Observe that in this action $G_1$ acts as a regular subgroup and it is an elementary abelian $2$-group which itself is core-free in $G$.
		
		The primitive permutation groups containing an abelian regular subgroup have been classified by Caiheng Li in~\cite{Li}. Applying this classification~\cite[Theorem~$1.1$]{Li} to our group $G$ in its action on $\Omega$ and to its elementary abelian regular subgroup $G_1$, we deduce  that one of the following holds:
		\begin{enumerate}
			\item\label{caseA} $G$ is an affine primitive permutation group,
			\item\label{caseB} the set $\Omega$ admits a Cartesian decomposition $\Omega=\Delta^\ell$ (for some $\ell \ge 1$) and the primitive group $G$ preserves this cartesian decomposition; moreover, $\tilde{T}^\ell\le G\le \tilde{T}\mathrm{wr}\Sym(\ell)$, where the action of $\tilde{T}\wr\Sym(\ell)$ on $\Delta^\ell$ is the natural primitive product action. The group $\tilde{T}$ is either $\Alt(\Delta)$ or $\Sym(\Delta)$, 
$G_1=G_{1,1}\times G_{1,2}\times \cdots \times G_{1,\ell}$ with $G_{1,i}\le \tilde{T}$ and with $G_{1,i}$ acting regularly on $\Delta$, for each $i$.
		\end{enumerate}
	
	Now, we shall see that neither of these two alternatives is possible.

\smallskip
		
\noindent\textsc{Case~\eqref{caseA}}

\smallskip

\noindent Let $V$ be socle of $G$. Thus $V\unlhd G$ and $V$ is an elementary abelian $2$-group. Observe that $$G=VR=G_1R,$$
	where the first equality follows from the fact that $V$ acts transitively on $\Omega$ with point stabiliser $R$ and the second equality follows because $G$ acts also transitively on  $R$ with point stabilizer $G_1$. Moreover,
	$$V\cap R=1=G_1\cap R,$$
	where the first equality follows because $V$ acts regularly on $\Omega$ with point stabilizer $R$ and the second equality follows because $R$ acts regularly on itself with point stabilizer $G_1$.

	Since $G_1$ is a regular subgroup of the affine group $G$, from~\cite[Corollary~5~(1)]{caranti}, we deduce
	\begin{equation}\label{selfnormalizing77}
	V\cap G_1\ne 1.
	\end{equation}

	Let $$N:=\norm G{V\cap G_1}\quad \textrm{and let} \quad Q:=\norm R{V\cap G_1}.$$ Since $G_1$ is abelian, we have $G_1\le N$ and hence 
	\begin{equation*}N=N\cap G=N\cap RG_1=(N\cap R)G_1=QG_1.
	\end{equation*}
	Similarly, since $V$ is abelian, we have $V\le N$ and hence 
	\begin{equation*}N=N\cap G=N\cap RV=(N\cap R)V=QV.
	\end{equation*}
	Thus
	\begin{equation}\label{selfnormalizing6ter}N=QG_1=QV.
	\end{equation}

	Let $r\in R$ and let $v\in V\cap G_1$.
	We recall that $r^{v}\in \{r,r^{-1}\}$.
	
	If $r^v=r$, then 
	$1^r=r=r^v=1^{rv}$
	and hence $rvr^{-1}\in G_1.$ 
	If $r^v=r^{-1}$, then $1^{r^{-1}}=r^{-1}=r^v=1^{rv}$ 
	and hence $rvr=r^2(r^{-1}vr)\in G_1$. As $V\unlhd G$, we have $r^{-1}vr\in V$ and hence $r^2V\in G_1V/V$. Since all the elements of $G_1V/V$ have order at most $2$, it follows that $r^4V=V$, that is $r^4\in V\cap R=1$. This shows that, if $o(r)\ne 4$, then $r^{-1}vr\in V\cap G_1$. Therefore, all elements of $R$ of order different from $4$ normalise $V\cap G_1$ and hence they all lie in $Q$. 
	
	This shows that $R\setminus Q$ is either empty, or contains only elements of order $4$. 
	In the first case~\eqref{selfnormalizing6ter} yields $\norm G{V\cap G_1}=N=QV=RV=G,$ that is $V\cap G_1\unlhd G$. 
	Since $V$ is the unique minimal normal subgroup of $G$ and since $V\cap G_1\ne 1$ by~\eqref{selfnormalizing77}, we deduce that $V=V\cap G_1$, that is, $V\le G_1$. However, this contradicts the fact that $G_1$ is core-free in $G$. Thus 
	\begin{center}$Q<R$ and every element in $R\setminus Q$ has order $4$. 
	\end{center}
	
	For every $r\in R\setminus Q$, $r^2$ does not have order $4$, so $r^2\in Q$. This shows that $Q$ contains the square of each element of $R$, hence \begin{equation}\label{selfnormalizing3}
	Q\unlhd R
	\end{equation}
	and $R/Q$ is an elementary abelian $2$-group.
	
	Let $x\in G_1$ and let $r\in R$. If $r^x=r$, then $rxr^{-1}\in G_1\le G_1Q=N$. If $r^x=r^{-1}$, then $rxr\in G_1$ and hence $rxr=r^2(r^{-1}xr)\in G_1\le G_1Q=N$. Since $r^2\in Q$, we deduce that $r^{-2}\cdot r^2(r^{-1}xr)=r^{-1}xr\in N$. We have shown that,
	\begin{equation}\label{selfnormalizing4}
	\textrm{for every }r\in R,\,\,r^{-1} G_1r\le N.
	\end{equation}
	
	From~\eqref{selfnormalizing3} and~\eqref{selfnormalizing4}, we deduce that $R$ normalises $G_1Q=N$. Since $G_1$ also normalizes $N$, we have that $RG_1=G$ normalises $N$, that is, 
	\begin{equation}\label{selfnormalizing5}
	QV=QG_1=N\unlhd G.
	\end{equation}
	
	Since $Q\unlhd R$ and since $R$ is a maximal subgroup of $G$ by~\eqref{selfnormalizing1}, we deduce that either $\norm GQ=G$ or $\norm G Q=R$. If $\norm GQ=G$, then $Q$ is a normal subgroup of $G$ contained in the core-free subgroup $R$. Therefore $Q=1$.
	From~\eqref{selfnormalizing6ter}, we have $G_1=QG_1=N=QV=V$, contradicting the fact that $G_1$ is core-free in $G$. Thus 
	\begin{equation}\label{selfnormalizing6}\norm G Q=R.
	\end{equation}
	
	When $G$ is viewed as a permutation group on $R$, $QG_1$ is the setwise stabilizer in $G$ of $Q\subseteq R$, hence we can consider 
	the permutation group induced by $N=QG_1$ in its action on $Q$.
	From~\eqref{selfnormalizing6}, we have $\norm {N}Q=N\cap R=QG_1\cap R=Q(G_1\cap R)=Q$.
	Let $H$ be the kernel of the permutational representation of $N$ on $Q$. Note that $H\le G_1.$ 
	Now, $QH/H$ is a regular subgroup of $N/H\le \Sym(Q)$ and, for every $rH\in QH/H$ and for every $gH\in G_1/H$, we have $r^{g}H\in \{rH,r^{-1}H\}$.
	If $\norm {N/H}{QH/H}=QH/H$,  
	from the minimality of our counterexample, we deduce that either $N=G$ or $G_1$ acts trivially on $Q$.  In the first case, $G=N=\norm{G}{V\cap G_1}$, that is $G_1\cap V$ is a normal subgroup of $G$.
	Since $V$ is the unique minimal subgroup of $G$, and since $V\cap G_1\ne 1$ by~\eqref{selfnormalizing77}, we deduce that $V=V\cap G_1$,  and consequently, $V= G_1$. However, this contradicts the fact that $G_1$ is core-free in $G$. Therefore $G_1$ fixes $Q$ pointwise,  that is, $G_1$ is the kernel of the action of $N=QG_1$ on $Q$ and hence 
	\begin{equation}\label{selfnormalizing7}
	G_1\unlhd N=QG_1=VG_1.
	\end{equation} 
	
	Let 
	$$U:=\langle G_1^g\mid g\in G\rangle.$$
	Observe that $U\unlhd G$. From~\eqref{selfnormalizing5}, for every $g\in G$, we have $G_1^g\le N^g=N,$ that is $U\le N.$
	 Moreover, for every $g\in G$, from~\eqref{selfnormalizing7}, we have $G_1^g\unlhd N^g=N$. Since $G_1$ is an elementary abelian $2$-group, then each $G_1^g$ is a normal $2$-subgroup of $N$, for every $g\in G$. Consequently $U$ is a normal $2$-subgroup of $G$. In particular, $U\cap R$ is a normal $2$-subgroup of $R$. 
	
	Since $V$ is an  irreducible $\mathbb{F}_2R$-module and $U\cap R\unlhd R$, we deduce that $V$ is completely reducible $\mathbb{F}_2(U\cap R)$-module by Clifford's theorem. Since $V$ has characteristic $2$ and since $U\cap R$ is a $2$-group, this can happen only when $$U\cap R=1.$$
Since $V$ is the unique minimal normal subgroup of $G$ and since $U\unlhd G$, we have $V\le U.$ Further, $U=U\cap G=U\cap G_1R=(U\cap R)G_1=G_1 $ and hence $V=G_1$. This is a contradiction because $V$ is normal in $G$ but $G_1$ is core-free in $G$.
	
	Therefore we can assume that $\norm {N/H}{QH/H}>QH/H.$ That is, there exists a non-identity element $g\in G_1$ normalizing $QH/H$. Hence, for every $r\in Q$,  $g^{-1}rg=uh$,  for some $u\in Q$ and for some $h\in H$. Since $g\in G_1$, and $r^g\in \{r,r^{-1}\}$, we get $u=u^h=1^{uh}=1^{g^{-1}rg}=r^{g}.$ This means that $g^{-1}rgH\in \{rH,(rH)^{-1}\}$ for every $r\in Q$, and consequently $\iota_g$ is a non-identity automorphism of $QH/H$ with the property that $(rH)^{\iota_g}\in \{rH,(rH)^{-1}\}$, for every $ rH\in QH/H$. Thus from Theorem~\ref{l:aut},  $Q\cong QH/H$ is either an abelian group of exponent greater than $2$ or a generalized dicyclic group. 
	
		Since $V$ is an  irreducibly $\mathbb{F}_2R$-module and $ {\bf O}_2(Q)\unlhd R$, we deduce that $V$ is completely reducible $\mathbb{F}_2(Q)$-module by Clifford's theorem. Since $V$ has characteristic $2$ and since ${\bf O}_2(Q)$ is a $2$-group, this can happen only when \begin{align}\label{trivial2}
{\bf O}_2(Q)=1.
		\end{align}
 If $Q$ is a generalised dicyclic group, that is, $Q=Dic(A,y,x)$, with $A$ an abelian group of even order and of exponent greater than $2$, and $y$ an involution in $A$, then $\langle y\rangle$
	is a characteristic subgroup of order $2$, which contradicts (\ref{trivial2}). 
 Thus $Q$ is an abelian group, and $Q$ has odd order by (\ref{trivial2}).  Since $N=QV=QG_1$ by~\eqref{selfnormalizing5}, and since $V\unlhd N$, then $V$ is the unique Sylow $2$-subgroup of $N$. As $|G_1|=|V|$ and $G_1\le N$, we get $G_1=V$. This contradicts the fact that $G_1$ is core-free in $G$.

\smallskip

\noindent\textsc{Case~\eqref{caseB}}

\smallskip

\noindent	We identify  $\Omega$ with $\Delta^\ell$, and we recall that $\Alt(\Delta)^\ell \le G\le \Sym(\Delta)\mathrm{wr}\Sym(\ell)$.
	Let $\delta_1\in \Delta$ and let $\omega=(\delta_1,\ldots,\delta_1)\in \Omega$. Since $R$ is a maximal subgroup of $G$, replacing $R$ by a suitable conjugate we may suppose that $R=G_\omega$. Now, $\Alt(\Delta\setminus\{\delta_1\})^\ell\le R$. 
	Further, recall that
	$G_1=G_{1,1}\times G_{1,2}\times \cdots \times G_{1,\ell}$, where $G_{1,i}\le \Sym(\Delta)$ is an elementary abelian $2$-subgroup of acting regularly on $\Delta$, for each $i$. Let $\delta_2\in \Delta\setminus\{\delta_1\}$. As $G_{1,1}\le \Sym(\Delta)$ is transitive on $\Delta$, there exists $g\in G_{1,1}$ such that $\delta_1^g=\delta_2$ and, since $G_{1,1}$ is a $2$-group, rearranging the points from $\delta_3$ onwards if necessary, we can assume
	$$g=(\delta_1\,\delta_2)(\delta_3\,\delta_4)(\delta_5\,\delta_6)(\delta_7\,\delta_8)\cdots.$$
	(Observe that $|\Delta|\ge 8$ because $|\Delta|$ is a power of $2$ larger than $5$.) Let consider the $3$-cycle $r=(\delta_2\,\delta_3\,\delta_4)$ and observe that it lies in $R$ because it fixes the point $\delta_1$ and $R=G_{\omega}$.
	
	In this new setting, to look at the original action of $G$ on $R$, we have to identify the set $R$ with the set of right cosets of $G_1$ in $G$. In particular,
	$$G_1r=G_1(\delta_2\,\delta_3\,\delta_4)$$
	is such a point. We have
	$$G_1rg=G_1(\delta_2\,\delta_3\,\delta_4)(\delta_1\,\delta_2)(\delta_3\,\delta_4)(\delta_5\,\delta_6)(\delta_7\,\delta_8)\cdots 
	=G_1(\delta_1\,\delta_2\,\delta_4)(\delta_5\,\delta_6)(\delta_7\,\delta_8)\cdots .$$
Since neither $rgr^{-1}\in G_1$ nor $rgr\in G_1$, then $G_1rg\notin\{G_1r, G_1r^{-1}\}$. This contradicts our hypotheses.

	\smallskip
We have shown that neither of the alternatives is possible. Therefore, we have contradicted the existence of such $G$ and $R$.
\end{proof}


This is sufficient to show that $\mathcal U_N$ is empty.

\begin{corollary}\label{cor:new}
When $R$ is neither abelian of exponent greater than $2$ nor generalised dicyclic, $\mathcal U_N=\emptyset$.
\end{corollary}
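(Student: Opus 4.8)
The plan is to read off Corollary~\ref{cor:new} directly from Lemma~\ref{yetanother}, so that no genuinely new argument is needed. First I would assume, for a contradiction, that there is some $S\in\mathcal U_N$, and set $G:=\norm{\Aut(\Cay(R,S))}{N}$, viewed as a subgroup of $\Sym(R)$. Since $N\trianglelefteq R$ and $R$ acts on itself by right multiplication as a regular group of automorphisms of $\Cay(R,S)$, we have $R\le G$. By the definition of $\mathcal S_N$ — and $\mathcal U_N\subseteq\mathcal S_N$ — there is $f\in G$ with $f\ne 1$ and $1^f=1$; as $R$ is regular on the vertex set, the only element of $R$ fixing the vertex $1$ is the identity, so $f\notin R$ and therefore $R<G$.

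Next I would observe that $G_1=\{f\in\norm{\Aut(\Cay(R,S))}{N}\mid 1^f=1\}$, and that the reformulation of $\mathcal U_N$ recorded just after Notation~\ref{Subdivision} says precisely that $r^g\in\{r,r^{-1}\}$ for every $r\in R$ and every $g\in G_1$. Hence $G$ meets the hypotheses of Lemma~\ref{yetanother}, and we obtain $\norm GR>R$. On the other hand, $S\in\mathcal U_N\subseteq\mathcal S_N\setminus\mathcal S_N^1$ gives $\norm{\Aut(\Cay(R,S))}{R}=R$; since $G\le\Aut(\Cay(R,S))$ and $R\le G$, this yields $R\le\norm GR\le\norm{\Aut(\Cay(R,S))}{R}=R$, so $\norm GR=R$, contradicting the previous sentence. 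Thus no such $S$ exists and $\mathcal U_N=\emptyset$.

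There is essentially no obstacle in this step: all the difficulty lies in Lemma~\ref{yetanother}, whose proof is the long reduction to a core-free primitive action followed by the separate treatment of the affine and product-action cases. The only point to keep in mind is that the sets $\mathcal S_N,\mathcal S_N^1,\mathcal T_N,\mathcal U_N$ are, by Notation~\ref{Subdivision}, defined only when $R$ is neither abelian of exponent greater than $2$ nor generalised dicyclic — which is exactly the hypothesis of the corollary, so invoking it costs nothing.
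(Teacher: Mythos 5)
Your proposal is correct and follows essentially the same route as the paper: both deduce the corollary from Lemma~\ref{yetanother} by exhibiting a subgroup $G$ of $\Aut(\Cay(R,S))$ with $R<G$ whose point stabiliser $G_1$ fixes or inverts every element of $R$, concluding $\norm GR>R$ and hence $S\in\mathcal S_N^1$, contradicting $S\in\mathcal U_N$. The only (immaterial) difference is the choice of $G$ — you take $G=\norm{\Aut(\Cay(R,S))}{N}$ directly, while the paper takes the subgroup generated by $R$ and the fix-or-invert automorphisms — and both choices satisfy the hypotheses of the lemma.
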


\begin{proof}
Recall from Notation~\ref{Subdivision} that when $R$ is neither abelian of exponent greater than $2$ nor generalised dicyclic $$\mathcal S_N=\{S\subseteq R\mid S=S^{-1},\,  \exists f\in \norm{\Aut(\Cay(R,S))}{N}\textrm{ with }f\ne 1 \textrm{ and }1^f=1\},$$ while
$$\mathcal{S}_N^1=\{S\in \mathcal{S}_N\mid R<\norm{\Aut(\Cay(R,S))}{R}\},$$ and $$\mathcal U_N=\{S\in \mathcal{S}_N\setminus \mathcal S_N^1 \mid  \forall f\in \norm{\Aut(\Cay(R,S))}{N}\textrm{ with }1^f=1\textrm{ we have }x^f\in \{x,x^{-1}\} \forall x \in R\}.$$ Notice that the set of all elements of $\Aut(\Cay(R,S))$ that fix the vertex $1$ and fix or invert every other element of $R$ is a subgroup of $\Aut(\Cay(R,S))$. By Lemma~\ref{yetanother} with $G$ being generated by $R$ and the set of all such elements, we have $\mathcal U_N=\emptyset$. This is because every set that could lie in $\mathcal U_N$ must appear in $\mathcal S_N^1$. 
\end{proof}

\section{Groups with a ``large" normal subgroup}\label{sec:Nlarge}

We begin this section with a lovely little general result showing that in a non-abelian group, there cannot be a group automorphism $\alpha$ such that the result of computing $nn^\alpha$ is constant for more than $3/4$ of the group elements (and in fact in an abelian group, this can only happen if $\alpha$ is the automorphism that inverts every group element). For the special case where $\alpha$ is trivial and the constant is $1$, our proof relies on (so does not replace) classical work by Liebeck and MacHale \cite{LMac}. 

\begin{lemma}\label{lemma:icecream}
Let $N$ be a group, let $\alpha$ be an automorphism of $N$ and let $t\in N$. Then 
one of the following holds:
\begin{enumerate}
\item\label{lemma:icecream1}$|\{n\in N\mid nn^\alpha =t\}|\le 3|N|/4$,
\item\label{lemma:icecream2}$N$ is abelian, $t=1$ and $n^\alpha=n^{-1}$ $\forall n\in N$.
\end{enumerate}
\end{lemma}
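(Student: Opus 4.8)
The plan is to set $f(n) = nn^\alpha$ and study the fibres of $f$, reducing the problem to counting when a single fibre can be very large. Write $m = |\{n \in N : nn^\alpha = t\}|$ and suppose $m > 3|N|/4$; I want to conclude that $N$ is abelian, $t=1$, and $\alpha = \iota$. The starting observation is that if $nn^\alpha = t$ and $n'n'^\alpha = t$, then $(n^{-1}n')(n^{-1}n')^\alpha = n^{-1} (nn^\alpha)(n'n'^\alpha)^{-1}\cdot$(correction term) — the map $f$ is not a homomorphism, so I would instead track the set $T = \{n : nn^\alpha = t\}$ and its translates. A cleaner route: fix one solution $n_0 \in T$ (nonempty since $m>0$), and substitute $n = n_0 k$. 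Then $n_0 k (n_0 k)^\alpha = t$ becomes $k k^\alpha{}^{(n_0^\alpha)} = $ a conjugate-twisted condition; replacing $\alpha$ by the automorphism $\beta : k \mapsto (k^\alpha)^{n_0^\alpha}$ composed appropriately, one reduces to the case $t = 1$, i.e. to counting $n$ with $n^\alpha = n^{-1}$, at the cost of replacing $\alpha$ by another automorphism. So the crux is the case $t=1$: if more than $3|N|/4$ elements satisfy $n^\alpha = n^{-1}$, show $N$ is abelian and $\alpha = \iota$.

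For that case, let $F = \{n \in N : n^\alpha = n^{-1}\}$, with $|F| > 3|N|/4$. First I would show $F$ forces near-commutativity: for $a,b \in F$ with $ab \in F$ as well, $(ab)^{-1} = (ab)^\alpha = a^\alpha b^\alpha = a^{-1}b^{-1}$, so $b^{-1}a^{-1} = a^{-1}b^{-1}$, i.e. $a$ and $b$ commute. Since $|F| > 3|N|/4$, for any $a, b \in F$ the three sets $F$, $a^{-1}F$, $b^{-1}F$ pairwise intersect (each has size $> 3|N|/4$, so any two meet), and in fact $F \cap a^{-1}F \cap b^{-1}F \neq \emptyset$ because three subsets of size $> 3|N|/4$ have common intersection of size $> |N| - 3\cdot|N|/4 \cdot(\text{...})$ — more carefully, $|F \cap a^{-1}F| > |N|/2$ and intersecting with $b^{-1}F$ keeps it nonempty. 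Picking $c$ in this triple intersection gives $c, ac, bc \in F$, hence (by the commuting observation applied suitably) $a$ and $b$ commute. Thus $F$ is an abelian subset generating an abelian subgroup; combined with $|F| > 3|N|/4 > |N|/2$ and $F$ being "thick", standard arguments (a subset of size $> |N|/2$ generates, and if it's contained in an abelian subgroup that subgroup is all of $N$) give that $N$ is abelian. Once $N$ is abelian, $\alpha$ and $\iota$ agree on a set of size $> 3|N|/4$, and since $\{n : n^\alpha = n^{-1}\} = \{n : n^{\alpha\iota} = n\} = \mathrm{Fix}(\alpha\iota)$ is a subgroup, a subgroup of index $< 4/3$ must be the whole group, so $\alpha = \iota$.

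The remaining, and genuinely harder, part is handling the case where $N$ is non-abelian directly — i.e. showing the fibre bound $m \le 3|N|/4$ cannot be exceeded without abelianness, \emph{including} the sub-case $\alpha = 1$, $t = 1$, which asks: how many elements of a non-abelian group can be involutions (together with the identity)? This is exactly where the paper flags a dependence on Liebeck–MacHale \cite{LMac}, who bound the proportion of elements satisfying $n^2 = 1$ (equivalently, the proportion of "self-inverse" elements) in a non-abelian group by $3/4$ (attained, e.g., by $D_8$ or $Q_8$ extended by elementary abelian $2$-groups). So my plan is: after the $t=1$ reduction above yields "$\alpha'$ inverts more than $3/4$ of $N$", if the relevant automorphism is essentially inversion-up-to-conjugation I push through the abelian argument; but the reduction $t \mapsto 1$ changes $\alpha$ by an inner-twist, and when that twisted automorphism happens to be close to trivial, I fall back on the Liebeck–MacHale bound to rule out the non-abelian case. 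The main obstacle I anticipate is making the reduction to $t=1$ clean enough that the resulting automorphism is controlled well enough to invoke either the elementary commuting argument or \cite{LMac} — in particular checking that the $3/4$ threshold is exactly preserved under the reduction, since there is no slack.
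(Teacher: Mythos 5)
Your proposal is correct in outline but takes a genuinely different, and in fact shorter, route than the paper. The paper does \emph{not} reduce to $t=1$ by translation: it first shows directly that $t$ is central of order at most $2$ with $t^\alpha=t$, then passes to $N/\langle t\rangle$, applies Liebeck--MacHale to the quotient, and finally rules out $t\ne 1$ by a counting argument with the complementary set $\mathcal{S}'=\{n\mid n^\alpha=n^{-1}\}$ (showing $\mathcal{S}\cdot\mathcal{S}\subseteq\mathcal{S}'$ while $|\mathcal{S}'|<|\mathcal{S}|$). Your translation trick collapses all of this: fixing $n_0$ with $n_0n_0^\alpha=t$ and writing $n=n_0k$, the equation $nn^\alpha=t$ becomes exactly $kk^\beta=1$, where $\beta$ is $\alpha$ followed by conjugation by $(n_0^\alpha)^{-1}$, which is again an automorphism of $N$. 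Since $n\mapsto n_0^{-1}n$ is a bijection, the $3/4$ threshold is preserved \emph{exactly} -- your worry on this point is unfounded -- and a single application of Liebeck--MacHale to $\beta$ gives that $N$ is abelian and $\beta$ inverts every element; no case distinction on whether the twisted automorphism is ``close to trivial'' is needed (the sub-case $\beta=1$, i.e.\ more than $3|N|/4$ involutions, is subsumed by the same citation). Two loose ends to tighten. First, your triple-intersection commuting argument only shows that $a$ and $b$ each commute with $c$, not with each other; the correct elementary route is to fix $a\in F$ and observe that every $c\in F\cap a^{-1}F$ satisfies $a,c,ac\in F$ and hence commutes with $a$, so $\cent Na\supseteq F\cap a^{-1}F$ has more than $|N|/2$ elements, forcing $a\in\Z N$; thus $F\subseteq\Z N$ and $N$ is abelian (after which $\mathrm{Fix}(\alpha\iota)$ is a subgroup of index less than $4/3$, as you say). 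Second, you should close the loop explicitly: once $N$ is abelian the conjugation twist is trivial, so $\beta=\alpha$, hence $n^\alpha=n^{-1}$ for all $n$ and $t=n_0n_0^\alpha=n_0n_0^{-1}=1$, which is exactly conclusion~(2) of the lemma for the \emph{original} $\alpha$ and $t$.
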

\begin{proof}
We let $\mathcal{S}:=\{n\in N\mid nn^\alpha =t\}$.
Suppose $|\mathcal{S}|>3|N|/4$. Observe that, for every $n\in \mathcal{S}$, we have $n^\alpha=n^{-1}t$.

As $|\mathcal{S}|>3|N|/4$, we have $\mathcal{S}^{\alpha^{-1}}\cap\mathcal{S}\ne \emptyset$. Let $n\in \mathcal{S}^{\alpha^{-1}}\cap \mathcal{S}$, so that $n, n^\alpha \in \mathcal S$. Then $nn^\alpha =t$ because $n\in\mathcal{S}$, and $n^\alpha(n^\alpha)^{\alpha}=t$ because $n^\alpha\in \mathcal{S}$. Therefore, $t=n^\alpha (n^\alpha)^\alpha=(nn^\alpha)^\alpha=t^\alpha$, that is, $t=t^\alpha$.

As $|\mathcal{S}|>3|N|/4$, we have $|\mathcal{S}\cdot t\cap\mathcal{S}|=|\mathcal{S}\cdot t|+|\mathcal{S}|-|\mathcal{S}\cdot t\cup\mathcal{S}|>3|N|/4+3|N|/4-|N|=|N|/2$. Let $n\in \mathcal{S}\cdot t\cap \mathcal{S}$. Then $n=mt$, for some $m\in \mathcal{S}$. Therefore 
$$t^{-1}m^{-1}\cdot t=n^{-1}t=n^\alpha=(mt)^\alpha=m^\alpha t^\alpha = m^{-1}t\cdot t.$$
From this we obtain $mt=t^{-1}m$, that is, $t^m=t^{-1}$. As $n=mt$, we also have $t^n=t^{-1}$. We have shown that, for every 
$n\in \mathcal{S}\cdot t\cap\mathcal{S}$, we have $t^n=t^{-1}$. For every two elements $n_1,n_2\in N$ with  $t^{n_1}=t^{-1}=t^{n_2}$, we have $n_1n_2^{-1}\in \cent Nt$. Therefore, we deduce that $|N|/2<|\mathcal{S}\cdot t\cap \mathcal{S}|\le |\cent N t|$. 
Thus $N=\cent N t$ and $t\in \Z N$. Moreover, for every $n\in \mathcal{S}t\cap \mathcal{S}$, we have $t^n=t^{-1}$ and, as $t\in \Z N$, we have $t^n=t$. Thus $t^2=1$. Summing up, $t$ is a central element of $N$ of order at most $2$.

Suppose that $t=1$. Then $\mathcal{S}=\{n\in N\mid n^\alpha=n^{-1}\}$. In particular, $\alpha$ is an automorphism 
inverting more than 
$3|N|/4$ of the elements of $N$. From a classical result of Liebeck and MacHale~\cite{LMac}, we deduce that $N$ is abelian and 
$\alpha$ is the automorphism inverting each element of $N$, that is, $n^\alpha=n^{-1}$ $\forall n\in N$.

Suppose that $t\ne 1$. Since  $t\in \Z N$ and since $t^\alpha=t$, we may consider the group $\bar{N}:=N/\langle t\rangle$ and the induced automorphism $\bar{\alpha}:\bar{N}\to\bar{N}$. In particular, in $\bar{N}$, the set $\mathcal{S}$ projects to
the set $\bar{\mathcal{S}}=\{\bar{n}\in \bar{N}\mid \bar{n}^{\bar{\alpha}}=\bar{n}^{-1}\}$. Since this set has cardinality larger than $3|\bar{N}|/4$, applying again the theorem of Liebeck and MacHale, we deduce that $\bar{N}$ is abelian and $\bar{n}^{\bar{\alpha}}=\bar{n}^{-1}$ $\forall \bar{n}\in \bar{N}$. It follows that, for every $n\in N$, $n^\alpha\in \langle t\rangle n^{-1}=\{n^{-1},tn^{-1}\}$. 

Set $\mathcal{S}':=\{n\in N\mid n^\alpha=n^{-1}\}$. In particular, $\{\mathcal{S},\mathcal{S}'\}$ is a partition of $N$ and $|\mathcal{S}'|=|N\setminus\mathcal{S}|<|N|/4$.

Suppose that $N$ is not abelian. As $|N\setminus \Z N|\ge |N|/2$ and $|\mathcal{S}|>3|N|/4$, there exists $n\in (N\setminus \Z N)\cap \mathcal{S}$. Since $\bar{N}$ is abelian, we have $[N,N]=\langle t\rangle$, from which it follows that $|N:\cent N n|=2$. For every
 $m\in \cent N n\cap \mathcal{S}$, we have $(nm)^\alpha=n^\alpha m^\alpha=n^{-1}t\cdot m^{-1}t=n^{-1}m^{-1}t^2=m^{-1}n^{-1}=(nm)^{-1}$ and hence $nm\in \mathcal{S}'$. This shows that $n(\cent Nn \cap \mathcal{S})\subseteq \mathcal{S}'$. Now, $$
|\mathcal{S}'|\ge |n(\cent N n\cap \mathcal{S})|=|\cent N n \cap \mathcal{S}|=|\cent N n|+|\mathcal{S}|-|\cent N n\cup \mathcal{S}|\ge |\cent N n|+|\mathcal{S}|-|N|=|\mathcal{S}|-\frac{|N|}{2}>\frac{|N|}{4},$$
contradicting the fact that $|\mathcal{S}'|<|N|/4$. This contradiction has arisen assuming that $N$ is not abelian and hence $N$ is abelian.

Now, for every $n,m\in \mathcal{S}$, we have $(nm)^\alpha=n^{-1}t\cdot m^{-1}t=n^{-1}m^{-1}t^2=(nm)^{-1}$ and hence $nm\in \mathcal{S}'$. Therefore, $\mathcal{S}\cdot \mathcal{S}\subseteq \mathcal{S}'$, but this is impossible because $|\mathcal{S}'|<|\mathcal{S}|$. This contradiction has arisen from assuming $t\ne 1$ and hence $t=1$ and the proof is now complete.
\end{proof}

We will also require a similar result that considers when inversion is applied after the automorphism.

\begin{lemma}\label{lemma:gelato}
Let $N$ be a group, let $\alpha$ be an automorphism of $N$ and let $t\in N$. Then 
one of the following holds:
\begin{enumerate}
\item\label{lemma:gelato1}$|\{n\in N\mid n(n^\alpha)^{-1} =t\}|\le 3|N|/4$,
\item\label{lemma:gelato2}$t=1$ and $n^\alpha=n$ $\forall n\in N$.
\end{enumerate}
\end{lemma}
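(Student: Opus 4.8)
The plan is to exploit the fact that the map $n\mapsto n(n^\alpha)^{-1}$ has a far more rigid fibre structure than the map $n\mapsto nn^\alpha$ occurring in Lemma~\ref{lemma:icecream}; in particular, no appeal to the Liebeck--MacHale theorem will be needed here, and the bound $3|N|/4$ is not even tight.

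First I would set $F:=\{n\in N\mid n^\alpha=n\}$, the subgroup of fixed points of $\alpha$, and define $\psi\colon N\to N$ by $\psi(n):=n(n^\alpha)^{-1}$, so that the set appearing in the statement is exactly $\psi^{-1}(t)$. A one-line computation identifies the fibres of $\psi$: from $n(n^\alpha)^{-1}=m(m^\alpha)^{-1}$ one gets, after multiplying on the left by $m^{-1}$ and on the right by $n^\alpha$, that $m^{-1}n=(m^\alpha)^{-1}n^\alpha=(m^{-1}n)^\alpha$, i.e. $m^{-1}n\in F$. Thus $\psi(n)=\psi(m)$ if and only if $n$ and $m$ lie in the same left coset of $F$. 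Consequently every non-empty fibre of $\psi$ is a single left coset of $F$, and in particular $|\psi^{-1}(t)|\in\{0,|F|\}$.

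Now suppose that alternative~\eqref{lemma:gelato1} fails, i.e.\ $|\psi^{-1}(t)|>3|N|/4$. Then $|F|>3|N|/4$, and since $|F|$ divides $|N|$ this forces $[N:F]=1$, so $F=N$; that is, $\alpha$ is the identity automorphism of $N$. In that case $\psi$ is the constant map with value $1$, and since $\psi^{-1}(t)=N$ is non-empty we conclude $t=1$, which is alternative~\eqref{lemma:gelato2}. There is essentially no obstacle in this argument — the whole content is the observation that $\psi$ induces an injection from the set of left cosets of $F$ into $N$ — and the only point requiring care is the left/right coset bookkeeping in the identity $\psi(n)=\psi(m)\iff m^{-1}n\in F$. (The same reasoning in fact yields the sharper dichotomy that either \eqref{lemma:gelato2} holds or $|\psi^{-1}(t)|\le|N|/2$, but the stated bound is all we need.)
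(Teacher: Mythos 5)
Your proof is correct, and it takes a genuinely different and more elementary route than the paper's. The paper proves this lemma by mirroring the structure of its proof of Lemma~\ref{lemma:icecream}: it first shows $t=t^\alpha$, then that $t$ is central of order at most $2$, then splits into the cases $t=1$ (where the fixed-point set is a subgroup of index $1$) and $t\neq 1$ (where it passes to $N/\langle t\rangle$ and derives a contradiction from $\mathcal S\cdot\mathcal S\subseteq\mathcal S'$). You instead observe directly that the map $\psi(n)=n(n^\alpha)^{-1}$ has fibres that are precisely the left cosets of the fixed-point subgroup $F=\cent{N}{\alpha}$ --- the computation $\psi(n)=\psi(m)\iff m^{-1}n=(m^{-1}n)^\alpha$ is correct, as is the converse --- so every nonempty fibre has size exactly $|F|$, and Lagrange finishes the argument. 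This buys a shorter proof, a sharper bound ($|N|/2$ in place of $3|N|/4$ whenever alternative~\eqref{lemma:gelato2} fails), and independence from the Liebeck--MacHale machinery. The trade-off is only expository: your argument exploits a rigidity specific to the ``twisted'' expression $n(n^\alpha)^{-1}$ (for $nn^\alpha$ as in Lemma~\ref{lemma:icecream} the analogous fibre condition is not a subgroup condition), whereas the paper's presentation emphasises the parallel between the two lemmas by reusing the same template. Either proof is acceptable; yours is arguably the better one for this particular statement.
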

\begin{proof}
The proof of this is very similar to the proof of Lemma~\ref{lemma:icecream}, so we omit some of the repeated details.

We let $\mathcal{S}:=\{n\in N\mid n(n^\alpha)^{-1} =t\}$.
Suppose $|\mathcal{S}|>3|N|/4$. Observe that, for every $n\in \mathcal{S}$, we have $n^\alpha=t^{-1}n$.

As before, by taking some $n \in \mathcal{S}^{\alpha^{-1}}\cap\mathcal{S}$, we can conclude that $t=t^\alpha$.

As $|\mathcal{S}|>3|N|/4$, we can argue as before that $|\mathcal{S}^{-1}t\cap\mathcal{S}|>|N|/2$. Let $n\in \mathcal{S}^{-1}t\cap \mathcal{S}$. Then $n=mt$, for some $m\in \mathcal{S}^{-1}$; that is, $m^{-1} \in \mathcal S$. Notice that this means $(m^{-1})^\alpha=t^{-1}m^{-1}$, so $m^\alpha=mt$. Therefore 
$$t^{-1}(mt)=t^{-1}n=n^\alpha=(mt)^\alpha=m^\alpha t^\alpha = (mt)t.$$
From this we obtain $mt=t^{-1}m$, that is, $t^m=t^{-1}$. As $n=mt$, we also have $t^n=t^{-1}$. We have shown that, for every 
$n\in \mathcal{S}^{-1} t\cap\mathcal{S}$, we have $t^n=t^{-1}$. As before, this implies that $|N|/2<|\mathcal{S}^{-1} t\cap \mathcal{S}|\le |\cent N t|$. 
Thus $N=\cent N t$ and $t\in \Z N$. As before, this implies that $t^2=1$. Summing up, $t$ is a central element of $N$ of order at most $2$.

Suppose that $t=1$. Then $\mathcal{S}=\{n\in N\mid n^\alpha=n\}$. In particular, $\alpha$ is an automorphism 
fixing more than 
half of the elements of $N$. Since the set of fixed points of an automorphism is a subgroup of $N$, we deduce that  
$\alpha=1$; that is, $n^\alpha=n$ $\forall n\in N$.

Suppose that $t\ne 1$. Since  $t\in \Z N$ and since $t^\alpha=t$, we may consider the group $\bar{N}:=N/\langle t\rangle$ and the induced automorphism $\bar{\alpha}:\bar{N}\to\bar{N}$. In particular, in $\bar{N}$, the set $\mathcal{S}$ projects to
the set $\bar{\mathcal{S}}=\{\bar{n}\in \bar{N}\mid \bar{n}^{\bar{\alpha}}=\bar{n}\}$. Since this set has cardinality larger than $|\bar{N}|/2$, again we see that $\bar{n}^{\bar{\alpha}}=\bar{n}$ $\forall \bar{n}\in \bar{N}$. It follows that, for every $n\in N$, $n^\alpha\in \langle t\rangle n=\{n,tn\}$. 

Set $\mathcal{S}':=\{n\in N\mid n^\alpha=n\}$. In particular, $\{\mathcal{S},\mathcal{S}'\}$ is a partition of $N$ and $|\mathcal{S}'|=|N\setminus\mathcal{S}|<|N|/4$.

Now, for every $n,m\in \mathcal{S}$, we have $(nm)^\alpha=(tn)(tm)=(nm)t^2=nm$ since $t$ is central of order $2$, and hence $nm\in \mathcal{S}'$. Therefore, $\mathcal{S}\cdot \mathcal{S}\subseteq \mathcal{S}'$, but this is impossible because $|\mathcal{S}'|<|\mathcal{S}|$. Again this contradiction completes our proof.
\end{proof}

Our next few results show that except in some very special cases, if we have a group $T$ with an index-$2$ subgroup $N$ and a permutation of $T$ that has a very specific sort of action on every element of the nontrivial coset of $N$ in $T$, then the number of subsets of $T$ that are closed under both inversion and this permutation is vanishingly small relative to the number of Cayley graphs on $T$.
\begin{lemma}\label{lemma:aux1}
	Let $T$ be a finite group, let $N$ be a subgroup of $T$ having index $2$, let $\gamma\in T\setminus N$, let $t\in N$ and  let $\alpha_t:T\to T$ be any permutation  defined by 
	\begin{eqnarray*}
		n^{\alpha_t} \in N\quad\textrm{and}\quad (\gamma n)^{\alpha_t}=\gamma tn,\,\forall n\in N.
	\end{eqnarray*}
	Then one of the following holds:
	\begin{enumerate}
		\item\label{lemma:aux11}$|\{X\subseteq T\mid X=X^{-1}, X^{\alpha_t}=X\}|\le 2^{\mathbf{c}(T)-\frac{|N|}{16}}$,
		\item\label{lemma:aux12}$T\cong C_4\times C_2^\ell$ for some $\ell\in\mathbb{N}$, $t$ is the only non-identity square in $T$ and $N$ is an elementary abelian $2$-group,
		\item\label{lemma:aux13}$o(t)=2$, $t=\gamma^2$ and $T=\Dic(N,\gamma^2,\gamma)$,
		\item\label{lemma:aux14}$t=1$.
	\end{enumerate}
	In parts~\eqref{lemma:aux12},~\eqref{lemma:aux13} and~\eqref{lemma:aux14}, if $n^{\alpha_t} \in \{n, n^{-1}\}$ for every $n \in N$, then we have $x^{\alpha_t}\in \{x,x^{-1}\}$ $\forall x\in T$. 
\end{lemma}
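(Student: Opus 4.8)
The plan is to count inverse-closed $\alpha_t$-invariant subsets $X$ of $T$ by partitioning $X$ into its intersection with $N$ and with the coset $\gamma N$, and then to show that unless one of the exceptional cases holds, the constraint imposed by $\alpha_t$-invariance on $X \cap \gamma N$ already forces a deficit of roughly $|N|/16$ in the exponent. First I would set up the bookkeeping: write $X = (X\cap N) \sqcup (X\cap \gamma N)$. The piece $X \cap N$ ranges (subject to inversion-closure) over roughly $2^{\mathbf{c}(N)}$ choices; since $\gamma N$ is inverse-closed as a set precisely when $T\setminus N$ contains no involutions, one must be slightly careful, but in any case the contribution of $X\cap\gamma N$ to the count is what we want to bound away from $2^{|N|/2}$. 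The action of $\alpha_t$ on $\gamma N$ is the map $\gamma n \mapsto \gamma t n$, i.e. left translation of the coset-coordinate by $t$; so $X \cap \gamma N$ is $\alpha_t$-invariant iff the corresponding subset of $N$ (via $\gamma n \leftrightarrow n$) is invariant under left multiplication by $t$, hence is a union of cosets of $\langle t\rangle$. Simultaneously, inverse-closure of $X$ couples $X\cap\gamma N$ to itself (since $(\gamma n)^{-1}\in \gamma N$ iff $\gamma N = (\gamma N)^{-1}$, i.e. $\gamma^2\in N$, which holds) via $\gamma n \mapsto (\gamma n)^{-1} = n^{-1}\gamma^{-1} = \gamma (\gamma^{-1}n^{-1}\gamma^{-1})$; call the resulting permutation $\beta$ of $N$. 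Thus $X\cap\gamma N$ corresponds to a subset of $N$ invariant under both $\langle t\rangle$-translation and $\beta$, and the number of such is $2^{m}$ where $m$ is the number of $\langle \text{left-}t, \beta\rangle$-orbits on $N$.

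The core estimate is then: either $m \le |N|/2 - |N|/16$, giving conclusion~\eqref{lemma:aux11}, or the orbit structure is so degenerate that we land in one of the special cases. The quantity $|N|/2 - m$ counts (with multiplicity) the ``extra merging'' caused by $t$-translation beyond what inversion already does. Concretely, I would argue that if $o(t)\ge 3$ then each $\langle t\rangle$-coset has size $\ge 3$, already forcing $m \le |N|/3$, which is well below $|N|/2 - |N|/16 = 7|N|/16$ — so outside the stated exceptions we may assume $o(t)\le 2$, and since $t=1$ is case~\eqref{lemma:aux14}, we may assume $o(t)=2$. With $o(t)=2$, $\langle t\rangle$-translation pairs up $N$ into $|N|/2$ pairs $\{n, tn\}$; these pairs merge further under $\beta$. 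I would invoke Lemma~\ref{lemma:icecream} and Lemma~\ref{lemma:gelato} here, applied to $N$ with the automorphism and element extracted from how $\beta$ and the $t$-translation interact: the situation where the two pairings $\{n,tn\}$ and $\{n, n^\beta\}$ coincide for more than $3/4$ of $n\in N$ is exactly the hypothesis that some expression of the form $n n^{\alpha}$ or $n(n^{\alpha})^{-1}$ equals a fixed element for $> 3/4$ of $n$ — and the conclusions~\eqref{lemma:icecream2},~\eqref{lemma:gelato2} of those lemmas, combined with the description of $\beta$ in terms of conjugation by $\gamma$, translate precisely into ``$N$ abelian with inversion'' or ``$\gamma$ centralizes $N$'', which pin down $T$ as $\Dic(N,\gamma^2,\gamma)$ (case~\eqref{lemma:aux13}) or as $C_4\times C_2^\ell$ (case~\eqref{lemma:aux12}), using $o(t)=2$, $t=\gamma^2$. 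When more than $1/4$ of the pairs are merged by $\beta$, a counting argument (each merge drops $m$ by $1$) gives $m \le |N|/2 - |N|/16$ directly — in fact with room to spare, since $1/4$ of $|N|/2$ pairs is $|N|/8$.

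The main obstacle I expect is the careful handling of the interaction between the two group actions on $N$ — left translation by $t$ and the ``twisted inversion'' $\beta$ — and in particular correctly identifying the automorphism $\alpha$ and element $t'$ of $N$ to which Lemmas~\ref{lemma:icecream} and~\ref{lemma:gelato} should be applied, so that their degenerate conclusions really do force $T$ into exactly the three listed shapes and no others. A secondary subtlety is the edge case analysis when $N$ itself has small order or when $T\setminus N$ does or does not contain involutions (affecting whether $\gamma N$ is inverse-closed), but these are finite checks that should not affect the asymptotic bound. Finally, for the last sentence of the lemma: in each of cases~\eqref{lemma:aux12},~\eqref{lemma:aux13},~\eqref{lemma:aux14}, assuming additionally $n^{\alpha_t}\in\{n,n^{-1}\}$ for all $n\in N$, I would verify directly that $(\gamma n)^{\alpha_t}=\gamma t n$ also lies in $\{\gamma n,(\gamma n)^{-1}\}$; this is a short computation using $t=1$ in case~\eqref{lemma:aux14}, and using $t=\gamma^2$ together with the dicyclic (resp.\ $C_4\times C_2^\ell$) relations $n^\gamma=n^{-1}$ (resp.\ the explicit abelian structure) in the other two cases, since $\gamma t n=\gamma\gamma^2 n=\gamma^{-1}\cdot\gamma^2\cdot\gamma^2 n=\ldots$ simplifies to $(\gamma n')^{-1}$ for the appropriate $n'$.
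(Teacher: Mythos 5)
Your overall architecture matches the paper's: split $X$ into $X\cap N$ and $X\cap\gamma N$, dispose of $o(t)\ge 3$ by noting that each $\alpha_t$-cycle on $\gamma N$ has length $o(t)$, and for $o(t)=2$ analyse the orbits of $\langle\alpha_t,\iota\rangle$ on $\gamma N$, with a Liebeck--MacHale-type input (the paper cites \cite{LMac} directly rather than Lemma~\ref{lemma:icecream} or~\ref{lemma:gelato}, but these are interchangeable here) pinning down the degenerate case $N$ abelian, $t=\gamma^2$, $\gamma$ inverting $N$ by conjugation, which splits into cases~\eqref{lemma:aux12} and~\eqref{lemma:aux13} according to whether $N$ is elementary abelian.

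There is, however, a genuine gap in your core estimate ``either $m\le |N|/2-|N|/16$ or the orbit structure is degenerate'': it uses $|N|/2$ as the baseline orbit count on $\gamma N$, whereas the correct baseline is $\mathbf{c}(\gamma N)=\bigl(|N|+|I(\gamma N)|\bigr)/2$. A size-$2$ orbit of $\langle\alpha_t,\iota\rangle$ on $\gamma N$ arises in two distinct ways: either $\alpha_t$ coincides with $\iota$ on the pair $\{\gamma n,\gamma tn\}$ (the paper's set $\mathcal{B}$), or both $\gamma n$ and $\gamma tn$ are involutions, so $\iota$ fixes each of them (the paper's set $\mathcal{A}$). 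Only $\mathcal{B}$ is controlled by Liebeck--MacHale; $\mathcal{A}$ can be all of $\gamma N$ with no degeneracy at all. For instance, take $T=D_8=\langle\rho,\sigma\rangle$, $N=\langle\rho\rangle\cong C_4$, $\gamma=\sigma$, $t=\rho^2$: every pair $\{\sigma\rho^k,\sigma\rho^{k+2}\}$ is a size-$2$ orbit, so $m=|N|/2>|N|/2-|N|/16$, yet none of cases~\eqref{lemma:aux12}--\eqref{lemma:aux14} holds ($t\ne 1$, $\gamma^2=1\ne t$, $T$ nonabelian). Conclusion~\eqref{lemma:aux11} is still true there, but only because the elements of $\mathcal{A}$ are involutions and hence inflate $|I(T)|$ and $\mathbf{c}(T)$; the paper makes this precise via $\tfrac{|T|+|I(N)\cup\mathcal{A}|}{2}\le\mathbf{c}(T)$, which turns the $+|\mathcal{A}|/2$ in the orbit count into a $-|\mathcal{A}|/4$ in the exponent and leaves $|\mathcal{B}|\le 3|N|/4$ as the only condition needing Liebeck--MacHale. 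You flag the involution bookkeeping as a subtlety to be handled, but it is precisely the step without which your dichotomy is false, so the argument is incomplete as written. (A minor additional slip: $\gamma N=T\setminus N$ is always inverse-closed as a set, since $\gamma^2\in N$; you state otherwise at first and then correct yourself.)
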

\begin{proof}
	If $t=1$, then we obtain part~\eqref{lemma:aux14}. Thus, for the rest of the argument, we assume $t\ne 1$.
	
	Observe that $\alpha_t$ fixes $N$ setwise and induces on $T\setminus N$ a permutation which is the product of disjoint cycles each of whose lengths is $o(t)$. For simplicity, we let $\mathcal{S}:=\{X\subseteq T\mid X=X^{-1}, X^{\alpha_t}=X\}$.

	If $o(t)\ge 3$, then 
	$$|\mathcal{S}|\le 2^{\mathbf{c}(N)+\frac{|T\setminus N|}{3}}=
	2^{\mathbf{c}(N)+\frac{|N|}{3}}=
	2^{\frac{|N|+|I(N)|}{2}+\frac{|N|}{3}}\le
	2^{\frac{|N|+|I(T)|}{2}+\frac{|N|}{3}}
	\le 2^{\mathbf{c}(T)-\frac{|N|}{6}}$$
	and hence part~\eqref{lemma:aux11} follows.
	
	The only remaining possibility is $o(t)=2$. Consider $H:=\langle \alpha_t,\iota\rangle$, where $\iota:T\to T$ is the mapping defined by $x^\iota=x^{-1}$ $\forall x\in T$. Clearly, $S\in \mathcal{S}$ if and only if $S$ is $H$-invariant. The orbits of $H$ on $T\setminus N$ have even cardinality because $o(\alpha_t)=o(t)=2$ and $\alpha_t$ has no fixed points on $T\setminus N$. There are only two possibilities for $H$ having an orbit of cardinality $2$ on $T\setminus N$:
	\begin{itemize}
		\item this orbit is $\{\gamma n,\gamma tn\}$ where both $\gamma n$ and $\gamma tn$ are involutions (in this case $\iota$ fixes both 
		$\gamma n$ and $\gamma tn$), 
		\item this orbit is $\{\gamma n ,\gamma tn\}$ and $(\gamma n)^{-1}=\gamma tn$ (in this case $(\gamma n)^{\alpha_t}=(\gamma n)^\iota$). 
	\end{itemize}
	Let $n_0$ be an element in $N$ with $o(\gamma n_0)=o(\gamma tn_0)=2$. As $o(\gamma n_0)=2$, we have $n_0\gamma=\gamma^{-1}n_0^{-1}$ and hence $$1=(\gamma tn_0)^2=\gamma tn_0\gamma t n_0=\gamma t \gamma^{-1}n_0^{-1}tn_0.$$ Therefore $t(\gamma^{-1}n_0^{-1})t=\gamma^{-1}n_0^{-1}$. Since $o(t)=2$, we deduce $(n_0\gamma)^t=n_0\gamma$, that is, $n_0\gamma\in \cent Tt$.
	As $\gamma n_0=(n_0\gamma)^{\gamma^ {-1}}\in {\cent T{t}}^{\gamma^{-1}}=\cent T{t^{\gamma^ {-1}}}$, the elements of the first type are in the set $$\mathcal{A}:=I([T\setminus N] \cap \cent T {t^{\gamma^ {-1}}})=I(\cent{T\setminus N}{t^{\gamma^{-1}}}).$$ Let $n_1$ 
	be an element in $N$ with $(\gamma n_1)^{-1}=\gamma t n_1$. Let $n\in N$ and suppose that $\gamma n_1n\in T\setminus N$ also satisfies $(\gamma n_1n)^{-1}=\gamma tn_1n$. This means $n^{-1}\gamma tn_1=\gamma tn_1 n$, that is, $n^{{(\gamma tn_1)}^{-1}}=n^{-1}$. Therefore, the elements of the second type are in the set $$\mathcal{B}:=\gamma n_1\{n\in N\mid n^{\gamma tn_1}=n^{-1}\}.$$

	Observe that $\mathcal{A}$ or $\mathcal{B}$ might be the empty set: $\mathcal{A}=\emptyset$ when there is no involution in $\cent {T\setminus N}{t^{\gamma^{\color{blue} {-1}}}}$, $\mathcal{B}=\emptyset$ when there is no element $n_1\in N$ with $(\gamma n_1)^{-1}=\gamma tn_1$. Observe also that $\mathcal{A}\cap \mathcal{B}=\emptyset$: indeed, if $\gamma n\in \mathcal{A}\cap\mathcal{B}$, then $(\gamma n)^2=1$ and $(\gamma n)^{-1}=\gamma tn$, that is $t=1$, which is a contradiction.
	
	Since $X\in\mathcal{S}$ if and only $X$ is a union of orbits of $H$, we get
	\begin{eqnarray*}
		|\mathcal{S}|&\le &
		2^{\mathbf{c}(N)+\frac{|\mathcal{A}\cup \mathcal{B}|}{2}+\frac{|T\setminus N|-|\mathcal{A}\cup\mathcal{B}|}{4}}
		=
		2^{\mathbf{c}(N)+\frac{|\mathcal{A}\cup \mathcal{B}|}{4}+\frac{|T\setminus N|}{4}}
		=
		2^{\frac{|N|+|I(N)|}{2}+\frac{|\mathcal{A}\cup \mathcal{B}|}{4}+\frac{|N|}{4}}\\
		&=&
		2^{\frac{|T|+|I(N)|}{2}+\frac{|\mathcal{A}\cup \mathcal{B}|}{4}-\frac{|N|}{4}}=2^{\frac{|T|+|I(N)|}{2}+\frac{|\mathcal{A}|}{4}+\frac{|\mathcal{B}|}{4}-\frac{|N|}{4}}
		=
		2^{\frac{|T|+|I(N)\cup \mathcal{A}|}{2}-\frac{|\mathcal{A}|}{4}+\frac{|\mathcal{B}|}{4}-\frac{|N|}{4}}
		\le 
		2^{\mathbf{c}(T)-\frac{|\mathcal{A}|}{4}+\frac{|\mathcal{B}|}{4}-\frac{|N|}{4}}.
	\end{eqnarray*}
	If $|\mathcal{B}|\le 3|N|/4$, then
	$$|\mathcal{S}|\le 2^{\mathbf{c}(T)+\frac{3|N|}{16}-\frac{|N|}{4}}=
	2^{\mathbf{c}(T)-\frac{|N|}{16}}
	$$
	and part~\eqref{lemma:aux11} follows.
	Suppose now that $|\mathcal{B}|>3|N|/4$, that is, $|\{n\in N\mid n^{\gamma tn_1}=n^{-1}\}|> 3|N|/4$. This means that the action of $\gamma tn_1$ by conjugation on $N$ inverts more than $3/4$ of the elements of $N$. From~\cite{LMac},  $N$ is abelian and the action of $\gamma tn_1$ by conjugation on $N$ inverts each element of $N$. Therefore $\mathcal{B}\supset\gamma N$ and hence $\gamma\in \mathcal{B}$. Therefore $\gamma^{-1}=\gamma t$, that is, $t=\gamma^{2}$ (since $o(t)=2$). When $N$ is an elementary abelian $2$-group, we deduce $T\cong C_4\times C_2^\ell$ for some $\ell\in\mathbb{N}$ and hence part~\eqref{lemma:aux12} holds. When $N$ has exponent greater than $2$, we deduce $T=\Dic(N,\gamma^2,\gamma)$ and hence part~\eqref{lemma:aux13} holds.
\end{proof}

The hypotheses of the next lemma look much like the previous one, with the additional assumption that $N$ is abelian (of exponent greater than $2$), and a different action on the nontrivial coset of $N$. The exceptional cases and the proof are quite different, though.

\begin{lemma}\label{lemma:aux2}
	Let $T$ be a finite group, let $N$ be an abelian subgroup of $T$ having index $2$ and exponent greater than $2$, let $t\in N$, let $\gamma\in T\setminus N$, let $\alpha_t:T\to T$ be any permutation  defined by 
	\begin{eqnarray*}
		n^{\alpha_t} \in N \quad\textrm{and}\quad (\gamma n)^{\alpha_t}=\gamma tn^{-1},\,\forall n\in N.
	\end{eqnarray*}
 Further suppose that either $o(\gamma)=2$, or $(\gamma n)^{\alpha_t}=\gamma n$ whenever $o(\gamma n)=2$.
	Then one of the following holds:
	\begin{enumerate}
		\item\label{lemma:aux21} $|\{X\subseteq T\mid X=X^{-1}, X^{\alpha_t}=X\}|\le 2^{\mathbf{c}(T)-\frac{|N|}{24}}$;
		\item\label{lemma:aux22} $T$ is abelian and $t=\gamma^{-2}$; 
		\item\label{lemma:aux24}$T\cong Q_8\times C_2^\ell$ and $N\cong C_4\times C_2^\ell$ for some $\ell\in \mathbb{N}$;
		\item\label{lemma:aux25}$t=\gamma^2$, $T\cong \langle x,y\mid x^4=y^4=(xy)^4,x^2=y^2\rangle\times C_2^\ell$ and $N\cong C_4\times C_2^{\ell+1}$ for some $\ell\in \mathbb{N}$. (The group with presentation $\langle x,y\mid x^4=y^4=(xy)^4,x^2=y^2\rangle$ has order $16$.)
	\end{enumerate}
	In parts~\eqref{lemma:aux22},~\eqref{lemma:aux24} and~\eqref{lemma:aux25}, if $n^{\alpha_t} \in \{n, n^{-1}\}$ for every $n \in N$, then we have $x^{\alpha_t}\in \{x,x^{-1}\}$ $\forall x\in T$.
\end{lemma}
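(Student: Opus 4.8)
The plan is to follow the template of the proof of Lemma~\ref{lemma:aux1}, the one genuinely new feature being that the group generated by $\alpha_t$ and inversion now acts on the non-trivial coset of $N$ as a \emph{dihedral} group rather than a cyclic one. Write $\iota\colon T\to T$, $x\mapsto x^{-1}$. Since $N$ is abelian of index $2$ in $T$, conjugation by any element of $T\setminus N$ induces the same automorphism $\sigma$ of $N$, and $\sigma^2=1$; put $c:=\gamma^{-2}\in N$. Identifying $T\setminus N$ with $N$ via $\gamma n\leftrightarrow n$, the permutations induced on $T\setminus N$ by $\alpha_t$ and by $\iota$ are the affine involutions $\bar\alpha\colon n\mapsto tn^{-1}$ and $\bar\iota\colon n\mapsto (n^{\sigma})^{-1}c$, and a short computation shows that $\bar\alpha\bar\iota$ has linear part $\sigma$ while $(\bar\alpha\bar\iota)^2$ is translation of $N$ by $w:=(tt^{\sigma})^{-1}\gamma^{-4}$. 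Set $H:=\langle\alpha_t,\iota\rangle$. An inverse-closed $\alpha_t$-invariant subset of $T$ is determined by an arbitrary inverse-closed ($\alpha_t|_N$-invariant) subset of $N$ together with a union of $H$-orbits on $T\setminus N$, so, writing $\mathcal S:=\{X\subseteq T\mid X=X^{-1},\ X^{\alpha_t}=X\}$,
$$|\mathcal S|\le 2^{\mathbf c(N)}\cdot 2^{\#\{H\text{-orbits on }T\setminus N\}}.$$

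First I would treat the case $o(w)\ge 3$: here $\langle w\rangle=\langle(\bar\alpha\bar\iota)^2\rangle$ is normal in the dihedral group $H|_{T\setminus N}$ with all of its orbits of size $o(w)\ge 3$, hence every $H$-orbit on $T\setminus N$ has size at least $3$ and $\#\{H\text{-orbits}\}\le|N|/3$, whence $|\mathcal S|\le 2^{\mathbf c(N)+|N|/3}\le 2^{\mathbf c(T)-|N|/6}\le 2^{\mathbf c(T)-|N|/24}$, which is conclusion~\eqref{lemma:aux21}. From now on $o(w)\le 2$, so $H|_{T\setminus N}$ is dihedral of order at most $8$. Now I would run the same orbit bookkeeping as in Lemma~\ref{lemma:aux1}: the $H$-orbits on $T\setminus N$ of size at most $2$ split, according to whether their elements are involutions of $T$, into a set $\mathcal A$ of involutions of $T\setminus N$ and a set $\mathcal B$ of non-involutions; the three generator fixed-point conditions $n^2=t$, $o(\gamma n)\le 2$ and $\gamma tn^{-1}=(\gamma n)^{-1}$ identify the pieces of $\mathcal B$ with subsets of cosets of the subgroups $\{n\in N\mid n^{\sigma}=n^{-1}\}$ and $\cent N\sigma$ of $N$, the square-roots-of-$t$ part having size at most $|\mathbf I(N)|\le|N|/2$ because $N$ has exponent greater than $2$. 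Using $\mathbf c(T)-\mathbf c(N)=\tfrac{1}{2}(|N|+|\mathbf I(T\setminus N)|)$, absorbing $\mathbf I(N)\cup\mathcal A\subseteq\mathbf I(T)$ into $\mathbf c(T)$, and bounding the number of $H$-orbits by $|N|/3$ plus the contributions of the short orbits, one obtains — exactly as in Lemma~\ref{lemma:aux1} — an estimate $|\mathcal S|\le 2^{\mathbf c(T)-|N|/24+|\mathcal B|/c}$ for an absolute constant $c$; the upshot is that if $|\mathcal B|\le 3|N|/4$ then conclusion~\eqref{lemma:aux21} holds.

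It remains to treat $|\mathcal B|>3|N|/4$. Since the square-roots part of $\mathcal B$ has size at most $|N|/2$, the rest of $\mathcal B$ — lying in a coset of $\cent N\sigma$ or of $\{n\in N\mid n^{\sigma}=n^{-1}\}$ — is large; using that the fixed-point set of an automorphism is a subgroup, and that an automorphism inverting more than $3/4$ of an abelian group is its inversion automorphism (Liebeck--MacHale~\cite{LMac}, as in Lemmas~\ref{lemma:icecream} and~\ref{lemma:gelato}), we get $\sigma=1$ or $\sigma=\iota_N$. If $\sigma=1$ then $N\le\Z T$, so $T$ is abelian, and back-substitution forces $t=\gamma^{-2}$, i.e.\ conclusion~\eqref{lemma:aux22}. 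If $\sigma=\iota_N$ then $T=N\langle\gamma\rangle$ with $\gamma$ inverting $N$ and $\gamma^2\in N$, so $T$ is generalised dihedral or generalised dicyclic over $N$; here the standing hypothesis — either $o(\gamma)=2$, or $\gamma tn^{-1}=\gamma n$ (equivalently $n^2=t$) for every involution $\gamma n$ of $T\setminus N$ — together with $o(\gamma^2)\le 2$ and the surviving relation $n^2=t\gamma^2$ for more than half of the $n\in N$ constrains the $2$-structure of $T$ so tightly that, after splitting off a complemented $C_2^{\ell}$, the only survivors are $T\cong Q_8\times C_2^{\ell}$ with $N\cong C_4\times C_2^{\ell}$ (conclusion~\eqref{lemma:aux24}) and $T\cong\langle x,y\mid x^4=y^4=(xy)^4,\ x^2=y^2\rangle\times C_2^{\ell}$ with $N\cong C_4\times C_2^{\ell+1}$ and $t=\gamma^2$ (conclusion~\eqref{lemma:aux25}). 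In each of~\eqref{lemma:aux22},~\eqref{lemma:aux24},~\eqref{lemma:aux25} the ``moreover'' assertion then follows by the direct check that the pinned-down relation between $t$ and $\gamma^2$ makes $\gamma tn^{-1}$ equal to $\gamma n$ or to $(\gamma n)^{-1}$ for each $n\in N$, so that $n^{\alpha_t}\in\{n,n^{-1}\}$ on $N$ propagates to $x^{\alpha_t}\in\{x,x^{-1}\}$ on all of $T$.

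I expect the $\sigma=\iota_N$ endgame to be the real obstacle: once $\sigma$ is forced to be inversion one must run a careful finite-group-theoretic argument separating $Q_8\times C_2^{\ell}$ and the order-$16$ central product times $C_2^{\ell}$ from each other and from all other generalised-dihedral/dicyclic groups over $N$, verifying that nothing else can occur — and it is exactly there that the hypothesis ``$o(\gamma)=2$ or $\alpha_t$ fixes every coset involution'' does the essential work. By contrast, the orbit bookkeeping of the second paragraph, although fiddly because of the dihedral action and the possible orbits of size $3$, is routine once organised as in Lemma~\ref{lemma:aux1}.
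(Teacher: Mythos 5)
Your opening reduction is sound and is a genuine repackaging of the paper's argument: observing that $\langle\bar\alpha,\bar\iota\rangle$ acts on $T\setminus N$ as a dihedral group whose rotation subgroup is generated by the translation by $w=(tt^{\sigma})^{-1}\gamma^{-4}$, so that $o(w)\ge 3$ forces every orbit to have length at least $3$ and yields conclusion~\eqref{lemma:aux21}, absorbs in one stroke the paper's ``$o(z)\ge 3$'' subcase and the subcases of its first case in which $\mathcal A=\emptyset$. After that, however, there are two genuine gaps. First, the counting step is asserted rather than proved, and the dichotomy you state is not the right one. The per-element saving in the exponent is zero exactly on the union of your $\mathcal B$ \emph{and} the set $\mathcal A_1$ of involutions of $T\setminus N$ that are \emph{fixed} by $\alpha_t$: a fixed involution is a singleton $H$-orbit that costs a full unit of $\mathbf{c}(T)$, and only swapped pairs of involutions produce a saving. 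Since $|\mathcal A_1|$ can be as large as $|N|/2$, the condition ``$|\mathcal B|\le 3|N|/4$'' does not imply conclusion~\eqref{lemma:aux21}: one can have $|\mathcal A_1|+|\mathcal B|=|N|$ with $|\mathcal B|\le |N|/2$ and no saving at all. This is exactly why the paper's estimate~\eqref{covid20} carries the extra term $-|\mathcal A\cap\mathcal B|/2$, and why the hypothesis that $\alpha_t$ fixes every involution of $\gamma N$ is invoked already in the counting (to identify $\mathcal A\cap\mathcal B$ with $\mathbf{I}(T\setminus N)$), not only in the endgame as you suggest.

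Second, the endgame is where the lemma actually lives, and you have deferred it entirely. Showing that the only surviving configurations are~\eqref{lemma:aux22},~\eqref{lemma:aux24} and~\eqref{lemma:aux25} requires proving that the two short-orbit sets are empty or cosets of the specific subgroups $\Omega_2(N)$ and $\cent N\gamma$, extracting the relations $n_0^2=t$, $t=\gamma^{-1}t^{-1}\gamma^{-3}$ and $t=[\gamma,n_1]\gamma^{-2}$, and then running the four-way sub-case analysis when both subgroups have index $2$ — which is what separates $Q_8\times C_2^\ell$ from the order-$16$ group and pins down $t=\gamma^{\pm 2}$. This occupies most of the paper's proof and is not carried out in your proposal beyond ``the $2$-structure is constrained so tightly that the only survivors are\dots''. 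Moreover, your route to ``$\sigma=1$ or $\sigma=\iota_N$'' is too quick: $|\mathcal B|>3|N|/4$ only places a set of size $>|N|/4$ inside a coset of $\cent N\sigma$, giving $|N:\cent N\sigma|\le 3$, which does not force $\sigma=1$; the paper instead derives $N=\cent N\gamma$ (hence conclusion~\eqref{lemma:aux22}) in the sub-case $\mathcal A=\emptyset$, and the $o(\gamma)=2$ branch is handled by an entirely separate argument via $\delta=\alpha_t\iota\alpha_t\iota$ and Lemmas~\ref{lemma:icecream} and~\ref{lemma:gelato}. In short, the plan is reasonable, but both the dichotomy and the identification of the exceptional groups are missing.
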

\begin{proof}
	We let $\iota:T\to T$ the permutation defined by $x^\iota=x^{-1}$ $\forall x\in T$.
	Since $N$ is abelian, for every $n\in N$, we have 
	\begin{align*}
	(\gamma n)^{\alpha_t^2}&=((\gamma n)^{\alpha_t})^{\alpha_t}=(\gamma tn^{-1})^{\alpha_t}=\gamma t(tn^{-1})^{-1}=\gamma tnt^{-1}=\gamma n.
	\end{align*} Thus $\alpha_t$ is a permutation having order $2$. Clearly, $\iota$ has also order $2$. For simplicity, we let $\mathcal{S}:=\{X\subseteq T\mid X=X^{-1}, X^{\alpha_t}=X\}$. In particular, $X\in\mathcal{S}$ if and only if $X$ is $\langle\alpha_t,\iota\rangle$-invariant, that is, $X$ is a union of $\langle \alpha_t,\iota\rangle$-orbits.
	
	Observe that $n^{-1}\gamma^{-1}=\gamma \cdot (\gamma^{-1}n^{-1}\gamma^{-1})$ and $\gamma^{-1}n^{-1}\gamma^{-1}\in N$ because $|T:N|=2$. Therefore 
	\begin{equation}\label{covid19}(n^{-1}\gamma^{-1})^{\alpha_t}=(\gamma\cdot \gamma^{-1}n^{-1}\gamma^{-1})^{\alpha_t}=\gamma t \gamma n\gamma.
	\end{equation}
	
	We divide the proof in two cases.
	
	\smallskip
	
	\noindent\textsc{Case $(\gamma n)^{\alpha_t}=\gamma n$ whenever $o(\gamma n)=2$.}
	
	
	\smallskip
	
	\noindent 
Note that $$c(T)=\frac{|T|}{2}+\frac{|I(T)|}{2}=\frac{|T|}{2}+\frac{|I(N)|}{2}+\frac{|I(T\setminus N)|}{2}=c(N)+\frac{|N|}{2}+\frac{|I(T\setminus N)|}{2}.$$ So $c(N) = c(T)-|N|/2-|I(T\setminus N)|/2$.
	
	Given $n\in N$, the $\langle\iota\rangle$-orbit containing $\gamma n$ is $\{\gamma n,n^{-1}\gamma^{-1}\}$. Now there are only two possibilities for $\alpha_t$ not fusing this $\langle\iota\rangle$-orbit with another $\langle\iota\rangle$-orbit. The first possibility is when $\alpha_t$ fixes both $\gamma n$ and $n^{-1}\gamma^{-1}$; the second possibility is when $(\gamma n)^{\alpha_t}=(\gamma n)^{\iota}$, that is, $\gamma tn^{-1}=n^{-1}\gamma^{-1}$.  Let 
	\begin{align*}
	\mathcal{A}&:=\{n\in N\mid (\gamma n)^{\alpha_t}=\gamma n, (n^{-1}\gamma^{-1})^{\alpha_t}=n^{-1}\gamma^{-1}\},\\
	\mathcal{B}&:=\{n\in N\mid \gamma tn^{-1}=n^{-1}\gamma^{-1}\}.
	\end{align*}
	
	Given $n\in \mathcal{A}$, we have $\gamma tn^{-1}=(\gamma n)^{\alpha_t}=\gamma n$ and, from~\eqref{covid19}, $\gamma t\gamma n\gamma=(n^{-1}\gamma^{-1})^{\alpha_t}=n^{-1}\gamma^{-1}$. The first equality yields $n^2=t$. The second equality yields 
	$$t=\gamma^{-1}n^{-1}\gamma^{-2}n^{-1}\gamma^{-1}=\gamma^{-1}n^{-2}\gamma^{-3}=\gamma^{-1}t^{-1}\gamma^{-3},$$
	where in the second equality we have used  that $\gamma^2\in N$ and that $N$ is abelian. Therefore, if $n\in \mathcal{A}$, then 
	$n^2=t$ and $t=\gamma^{-1}t^{-1}\gamma^{-3}$. Observe that the second condition does not depend on $n$ any longer. This means that we have two possibilities for $\mathcal{A}$; either $\mathcal{A}=\emptyset$, or $\mathcal{A}=n_0\Omega_2(N)$ where $\Omega_2(N):=\{n\in N\mid o(n)\le 2\}$ and where $n_0\in N$ satisfies $n_0^2=t$. Summing up
	\[
	\mathcal{A}=
	\begin{cases}
	\emptyset&\textrm{if there is no }n\in N\textrm{ with }n^2=t, \textrm{or if }t\ne \gamma^{-1}t^{-1}\gamma^{-3},\\
	n_0\Omega_2(N)&\textrm{where }n_0\in N \textrm{ satisfies }n_0^2=t \textrm{ and }t= \gamma^{-1}t^{-1}\gamma^{-3}.
	\end{cases}
	\] 
	
	Given $n\in \mathcal{B}$, we have $t=\gamma^{-1}n^{-1}\gamma^{-1}n=\gamma^{-1}n^{-1}\gamma n\gamma^{-2}=[\gamma,n]\gamma^{-2}$ (using $\gamma^2 \in N$ in the second equality).  This means that we have two possibilities for $\mathcal{B}$; either $\mathcal{B}=\emptyset$, or $\mathcal{B}=n_1\cent N\gamma$ where $n_1\in N$ satisfies $t=[\gamma,n_1]\gamma^{-2}$. Summing up
	\[
	\mathcal{B}=
	\begin{cases}
	\emptyset&\textrm{if there is no }n\in N\textrm{ with }t=[\gamma,n]\gamma^{-2},\\
	n_1\cent N\gamma&\textrm{where }n_1\in N \textrm{ satisfies }t=[\gamma,n_1]\gamma^{-2}.
	\end{cases}
	\] 
	
	
We claim that $\mathcal{A} \cap \mathcal B=\{n \in N: o(\gamma n)=2\}$. Certainly if $o(\gamma n)=2$ then by the case we are in, $(\gamma n)^{\alpha_t}=\gamma n=(\gamma n)^{-1}$ and therefore $n \in \mathcal A \cap \mathcal B$. Conversely, if $n \in \mathcal A \cap \mathcal B$ then $(\gamma n)^{\alpha_t}=\gamma n$ and $(\gamma n)^{\alpha_t}=(\gamma n)^{-1}$, so $o(\gamma n)=2$.  Therefore $|\mathcal A \cap \mathcal B|=|I(T \setminus N)|$.

Using the sets $\mathcal{A}$ and $\mathcal{B}$ we are ready to estimate $|\mathcal{S}|$. Indeed, we have
\begin{eqnarray}\label{covid20}|\mathcal{S}|&\le& 
2^{\mathbf{c}(N)+\frac{|\gamma N\setminus (\gamma\mathcal{A}\cup\gamma\mathcal{B})|}{4}+\frac{|\gamma\mathcal{A}\setminus\gamma(\mathcal A \cap \mathcal B)|}{2}+\frac{|\gamma\mathcal{B}\setminus\gamma(\mathcal A \cap \mathcal B)|}{2}+|\gamma(\mathcal A \cap \mathcal B)|}\\&=&\nonumber
2^{\mathbf{c}(N)+\frac{|\gamma N|}{4}+\frac{|\mathcal{A}|}{4}+\frac{|\mathcal{B}|}{4}}=
2^{\mathbf{c}(T)-\frac{|N|}{2}+\frac{|\gamma N|}{4}+\frac{|\mathcal{A}|}{4}+\frac{|\mathcal{B}|}{4}-\frac{|I(T\setminus N)|}{2}}=
2^{\mathbf{c}(T)-\frac{|N|}{4}+\frac{|\mathcal{A}|}{4}+\frac{|\mathcal{B}|}{4}-\frac{|\mathcal A \cap \mathcal B|}{2}}.
\end{eqnarray}

If  $\mathcal{A}=\mathcal{B}=\emptyset$, then part~\eqref{lemma:aux21} follows immediately. Suppose then $\mathcal{A}$ and $\mathcal{B}$ are not both empty. If $\mathcal{A}=\emptyset$, then part~\eqref{lemma:aux21} follows as long as $N\ne \cent N\gamma$. If $N=\cent N\gamma$, then $[\gamma ,n_1]=1$ and hence $t=\gamma^{-2}$. Thus, we obtain part~\eqref{lemma:aux22}.  If $\mathcal{B}=\emptyset$, then part~\eqref{lemma:aux21} follows as long as $N\ne \Omega_2(N)$.  However, since we are assuming that $N$ has exponent greater than $2$, we cannot have $N=\Omega_2(N)$. Thus we have finished discussing the case $\mathcal{A}=\emptyset$ or $\mathcal{B}=\emptyset$. We now assume $\mathcal{A}\ne\emptyset\ne \mathcal{B}$. 
In particular, $|N:\cent N\gamma|\ge 2$ and $|N:\Omega_2(N)|\ge 2$. If $|N:\cent N\gamma|\ge 3$ or if  $|N:\Omega_2(N)|\ge 3$, then from~\eqref{covid20} we have
$$|\mathcal{S}|\le 2^{\mathbf{c}(T)-\frac{|N|}{4}+\frac{|\mathcal{A}|}{4}+\frac{|\mathcal{B}|}{4}}\le 2^{\mathbf{c}(T)-\frac{|N|}{4}+\frac{|N|}{12}+\frac{|N|}{8}}=2^{\mathbf{c}(T)-\frac{|N|}{24}}$$
and part~\eqref{lemma:aux21} follows.

It remains to deal with the case that $|N:\Omega_2(N)|=2=|N:\cent N\gamma|$, so $\mathcal{A}$ and $\mathcal{B}$ are both cosets of an index $2$ subgroup of $N$. If $\mathcal A \cap \mathcal B \neq \emptyset$ then since both are cosets of index-2 subgroups of $N$, it is straightforward to see that their intersection has cardinality at least $|N|/4$, and part~\eqref{lemma:aux21} follows. If $\mathcal{A}\cap\mathcal{B}=\emptyset$, we obtain that $\mathcal{A}$ and $\mathcal{B}$ are both cosets of the same index $2$ subgroup of $N$. Therefore, $\cent N \gamma=\Omega_2(N)$ and $N \cong C_4 \times C_2^\ell$ for some $\ell \in \mathbb N$. Let us call this index-2 subgroup of $N$, $M$. Therefore, we have either $\mathcal{A}=M$ and $\mathcal{B}=N\setminus M$, or $\mathcal{A}=N\setminus M$ and $\mathcal{B}=M$. In the first possibility, we have $n_0^2=1$, $\mathcal{A}=\Omega_2(N)$, $\gamma^4=1$ and $\gamma^2=[\gamma,n_1]=\gamma^{-1}n_1^{-1}\gamma n_1$. From this it follows $\gamma^{-1}=n_1^{-1}\gamma n_1$. Since $n_1^2=\gamma^2$ is the unique involution that is a square in $N$, we get part~\eqref{lemma:aux24}. In the second possibility, $\gamma^{-2}=t=n_0^2$. If we also have $(\gamma n_0)^2=t$,  then $T=\Dic(N,\gamma^2,\gamma)$ and we obtain again part~\eqref{lemma:aux24}. If $(\gamma n_0)^2\ne t$, then $\langle \gamma,n_0\rangle$ has order $16$ and is isomorphic to the group with presentation $\langle x,y\mid x^4=y^4=(xy)^4=1, x^2=y^2\rangle$ and we obtain part~\eqref{lemma:aux25}.

\smallskip

\noindent\textsc{Case $o(\gamma)=2$.}
For every $n\in N$, from~\eqref{covid19} (and using $o(\gamma)=2$), we have
\begin{align*}
(\gamma n)^{\alpha_t\iota \alpha_t\iota}&=(\gamma tn^{-1})^{\iota\alpha_t\iota}=((tn^{-1})^{-1}(\gamma)^{-1})^{\alpha_t\iota}=(\gamma t\gamma (tn^{-1})\gamma)^{\iota}=(\gamma tt^{\gamma}(n^{-1})^{\gamma})^\iota\\
&=n^{\gamma} (tt^{\gamma})^{-1}\gamma=(tt^{\gamma})^{-1}n^{\gamma} \gamma=(t^{\gamma})^{-1}t^{-1}\gamma n=\gamma (t^{\gamma} t)^{-1}n=\gamma (tt^{\gamma})^{-1}n.
\end{align*}
Moreover, $n^{\alpha_t\iota\alpha_t\iota}\in N$ $\forall n\in N$.
Define $z:=(tt^{\gamma'})^{-1}$ and $\delta:T\to T$ by $$n^\delta=n^{\alpha_t\iota\alpha_t\iota} \textrm{ and }(\gamma n)^\delta=\gamma zn,\,\, \forall n\in N.$$ In particular, $\delta=\alpha_t\iota\alpha_t\iota$.

Recall that $X\in\mathcal{S}$ if and only if $X$ is $\langle \alpha_t,\iota\rangle$-invariant. Since $\delta\in \langle\alpha_t,\iota\rangle$, we deduce that $X$ is also $\langle \iota,\delta\rangle$-invariant.  

\smallskip

\noindent\textsc{Subcase $o(z)\ge 3$.}

\smallskip

\noindent Since the orbits of $\delta$ on $T\setminus N$ have all length $o(z)\ge 3$, we have
$$|\mathcal{S}|\le 2^{\mathbf{c}(N)+\frac{|N|}{3}}=2^{\frac{|N|+|I(N)|}{2}+\frac{|N|}{2}-\frac{|N|}{6}}= 2^{\frac{|T|+|I(N)|}{2}-\frac{|N|}{6}}\le 2^{\mathbf{c}(T)-\frac{|N|}{6}}$$
and part~\eqref{lemma:aux21} follows. 

\smallskip

\noindent\textsc{Subcase $o(z)=2$.}

\smallskip

\noindent For every $n\in N$, we have
$$
(\gamma n)^{\iota\delta\iota\delta}=
(n^{-1}\gamma)^{\delta\iota\delta}=
(\gamma (n^{-1})^{\gamma})^{\delta\iota\delta}
=(\gamma z(n^{-1})^{\gamma})^{\iota\delta}
=(n^{\gamma} z\gamma)^{\delta}
=(\gamma nz^{\gamma})^{\delta}
=(\gamma z^{\gamma} n)^{\delta}
=\gamma zz^{\gamma} n.$$
Define $\delta':T\to T$ by $$n^{\delta'}=n^{\delta} \textrm{ and }(\gamma n)^{\delta'}=\gamma zz^{\gamma} n,\,\, \forall n\in N.$$
If $X\in\mathcal{S}$, then $X$ is $\langle \delta,\iota\rangle$-invariant and hence $X$ is also $\langle \delta,\delta'\rangle$-invariant. Suppose $z^{\gamma}\ne z$. Since the orbits of $\langle \delta,\delta'\rangle$ on $T\setminus N$ have all length $|\langle z,z^{\gamma'}\rangle|\ge 4$, we have
$$|\mathcal{S}|\le 2^{\mathbf{c}(N)+\frac{|N|}{4}}=2^{\frac{|N|+|I(N)|}{2}+\frac{|N|}{2}-\frac{|N|}{4}}=2^{\frac{|T|+|I(N)|}{2}-\frac{|N|}{4}}\le 2^{\mathbf{c}(T)-\frac{|N|}{4}}$$
and part~\eqref{lemma:aux21} follows. 

Suppose $o(z)=2$ and $z^{\gamma}=z$. For every $n\in N$, we have
$$(\gamma n)^{\iota\delta}
=(n^{-1}\gamma)^{\delta}
=(\gamma (n^{-1})^{\gamma})^{\delta}
=\gamma z(n^{-1})^{\gamma}
=z\gamma (n^{-1})^{\gamma}
=zn^{-1}\gamma
=(\gamma zn)^{\iota}
=(\gamma n)^{\delta\iota}.$$
This shows that $\iota\delta=\delta \iota$ in its action on $T\setminus N$ and hence $\langle \iota_{|T\setminus N},\delta_{|T\setminus N}\rangle$ is an elementary abelian $2$-group of order $1$, $2$ or $4$. (Here we are denoting by $\iota_{|T\setminus N}$ and by $\delta_{|T\setminus N}$ the restrictions of $\iota$ and of $\delta$ to $T\setminus N$.) This group cannot have order $1$ because $o(z)= 2$ and hence $\delta_{|T\setminus N}$ is not the identity permutation.

If this group has order $2$, then $\iota_{|T\setminus N}$ must be either $\delta_{|T\setminus N}$ or the identity permutation. Suppose that $\iota_{|T\setminus N}=\delta_{|T\setminus N}$. Then for every $n \in N$ we have $n^{-1}\gamma=\gamma zn$, so $n^{\gamma}=zn^{-1}$ and hence $nn^{\gamma}=z$. But since $z \neq 1$, Lemma~\ref{lemma:icecream} implies that we cannot have $z=nn^{\gamma}$ for every $n \in N$.

So we must have $\iota_{|T\setminus N}$ being the identity permutation, that is, $n^{-1}\gamma=(\gamma n)^\iota=\gamma n$, so $n^{\gamma}=n^{-1}$ $\forall n\in N$. In particular, $\mathbf{c}(\gamma N)=|N|$ and $\mathbf{c}(T)=\mathbf{c}(N)+|N|$. Since the orbits of $\langle \delta\rangle$ on $T\setminus N$ have all length $o(z)=2$, we have $|\mathcal{S}|\le 2^{\mathbf{c}(N)+|N|/2}=2^{\mathbf{c}(T)-|N|/2}$ and part~\eqref{lemma:aux21} follows. 

It remains to consider the case that $\langle \iota_{|T\setminus N},\delta_{|T\setminus N}\rangle$ has order $4$. By the orbit counting lemma, the number of orbits of $\langle\iota\rangle$ on $T\setminus N$ is 
\begin{equation}\label{covid21}\frac{1}{2}(|T\setminus N|+|\mathrm{Fix}_{T\setminus N}(\iota)|)=\frac{1}{2}(|T\setminus N|+|I(T\setminus N)|)=\mathbf{c}(T\setminus N).
\end{equation} Also, by the orbit counting lemma, the number of orbits of $\langle\iota_{|T\setminus N},\delta_{|T\setminus N}\rangle$ on $T\setminus N$ is 
\begin{eqnarray*}
	\frac{1}{4}\left(|N|+|\mathrm{Fix}_{T\setminus N}(\iota)|+
	|\mathrm{Fix}_{T\setminus N}(\delta)|+
	|\mathrm{Fix}_{T\setminus N}(\iota\delta)|\right)&
	=&
	\mathbf{c}(T\setminus N)-\frac{|N|}{4}-\frac{|\mathrm{Fix}_{T\setminus N}(\iota)|}{4}+
	\frac{|\mathrm{Fix}_{T\setminus N}(\delta)|}{4}+\frac{|\mathrm{Fix}_{T\setminus N}(\iota\delta)|}{4}\\
	&=&
	\mathbf{c}(T\setminus N)-\frac{|N|}{4}-\frac{|\mathrm{Fix}_{T\setminus N}(\iota)|}{4}+\frac{|\mathrm{Fix}_{T\setminus N}(\iota\delta)|}{4}\\
	&\le& 
	\mathbf{c}(T\setminus N)-\frac{|N|}{4}+\frac{|\mathrm{Fix}_{T\setminus N}(\iota\delta)|}{4},
\end{eqnarray*}
where in the first equality we have used~\eqref{covid21} and in the second equality we have used the fact that $\delta$ has no fixed points on $T\setminus N$.
Now, $\gamma n\in\mathrm{Fix}_{T\setminus N}(\iota\delta)$ if and only if $\gamma n=(\gamma n)^{\iota\delta}=\gamma z(n^{-1})^{\gamma}$, that is, $z=nn^{\gamma}$. From Lemma~\ref{lemma:icecream}, we deduce $|\mathrm{Fix}_{T\setminus N}(\iota\delta)|\le 3|N|/4$ because $z\ne 1$. Thus
$$|\mathcal{S}|\le 
2^{\mathbf{c}(N)+\mathbf{c}(T\setminus N)-\frac{|N|}{4}+\frac{3|N|}{16}}=
2^{\mathbf{c}(T)-\frac{|N|}{16}}$$
and part~\eqref{lemma:aux21} follows. 

\smallskip

\noindent\textsc{Subcase $o(z)=1$.}

\smallskip

\noindent In this case, $tt^{\gamma}=z=1$ and $t^{\gamma}=t^{-1}$. In this case, for every $n\in N$, we have $$
(\gamma n)^{\iota\alpha_t}
=(\gamma (n^{-1})^{\gamma})^{\alpha_t}
=\gamma tn^{\gamma}
=t^{-1}\gamma n^{\gamma}
=t^{-1} n\gamma
=(\gamma tn^{-1})^{\iota}
=(\gamma n)^{\alpha_t\iota}.$$
This shows that $\iota\alpha_t=\alpha_t\iota$ on $T \setminus N$, and hence (in particular) $\langle \iota_{|T\setminus N},(\alpha_t)_{|T\setminus N}\rangle$ is an elementary abelian $2$-group of order $1$, $2$ or $4$. If $(\alpha_t)_{|T\setminus N}$ is the identity mapping,  then
$\gamma n=(\gamma n)^{\alpha_t}=\gamma t n^{-1},$ for every $n\in N$. In particular, $\gamma t=\gamma t t^{-1}$  which implies $t=1$. This means that for every $n \in N$, $\gamma n=(\gamma n)^{\alpha_t}=\gamma n^{-1}$,
so that $N$ is an elementary abelian $2$-group, contradicting our hypothesis that $N$ has exponent greater than $2$.

If $\iota_{T\setminus N}$ is the identity mapping, then $\mathbf{c}(\gamma N)=|N|$ and hence $\mathbf{c}(T)=\mathbf{c}(N)+|N|$. Observe that $$\mathrm{Fix}_{T\setminus N}(\alpha_t):=\{\gamma n\mid t=n^2\}.$$ Let $n_0^2=t$, an easy computation shows that
$$\mathrm{Fix}_{T\setminus N}(\alpha_t)=\gamma n_0\Omega_2(N), $$
hence $|\mathrm{Fix}_{T\setminus N}(\alpha_t)|=| \Omega_2(N)| \le |N|/2$. This shows that $\langle (\alpha_t)_{|T\setminus N}\rangle$ has at most $|N|/2+(|N|/2)/2=3|N|/4$ orbits on $T\setminus N$. Therefore
$$|\mathcal{S}|\le 2^{\mathbf{c}(N)+\frac{3|N|}{4}}=2^{\mathbf{c}(T)-|N|+\frac{3|N|}{4}}=2^{\mathbf{c}(T)-\frac{|N|}{4}}$$ and part~\eqref{lemma:aux21} follows. So we can assume that $\iota_{T\setminus N}$ is not the identity.

 
Since $\gamma^2=1$, when $\iota_{|T\setminus N}=(\alpha_t)_{|T\setminus N}$, then
	$t^{-1}\gamma=(\gamma t)^{\iota_{T\setminus N}}=(\gamma t)^{\alpha_t}=\gamma,$ so $t=1$. Further, $n^{-1}\gamma=(\gamma n)^{\iota_{T\setminus N}}=(\gamma n)^{\alpha_t}=\gamma n^{-1},$ for every $n\in N$, that is $T$ is abelian, and part~\eqref{lemma:aux22} holds.

It only remains to consider the case that $\langle\iota_{|T\setminus N}, (\alpha_t)_{|T\setminus N}\rangle$ has order $4$. 

By the orbit counting lemma, the number of orbits of $\langle\iota,\alpha_t\rangle$ on $T\setminus N$ is 
\begin{eqnarray}\label{covid19_1}
&&\frac{1}{4}\left(|N|+|\mathrm{Fix}_{T\setminus N}(\iota)|+|\mathrm{Fix}_{T\setminus N}(\alpha_t)|+
|\mathrm{Fix}_{T\setminus N}(\iota\alpha_t)|\right)\\\nonumber
&&=
\mathbf{c}(T\setminus N)-\frac{|N|}{4}-\frac{|\mathrm{Fix}_{T\setminus N}(\iota)|}{4}+\frac{|\mathrm{Fix}_{T\setminus N}(\alpha_t)|}{4}+\frac{|\mathrm{Fix}_{T\setminus N}(\iota\alpha_t)|}{4},
\end{eqnarray}
where the equality between the two members follows by~\eqref{covid21}.
If $|\mathrm{Fix}_{T\setminus N}(\alpha_t)|\le |N|/3$ and $|\mathrm{Fix}_{T\setminus N}(\iota\alpha_t)|\le |N|/2$, or $|\mathrm{Fix}_{T\setminus N}(\alpha_t)|\le |N|/2$ and $|\mathrm{Fix}_{T\setminus N}(\iota\alpha_t)|\le |N|/3$, then we immediately obtain part~\eqref{lemma:aux21}. Therefore we suppose that this does not hold. An easy computation reveals that
\begin{align*}
\mathrm{Fix}_{T\setminus N}(\iota\alpha_t)&:=\{\gamma n\mid t^{-1}=[n,\gamma]\}.
\end{align*}

As $(\alpha_t)_{|T\setminus N}$ and $(\iota\alpha_t)_{|T\setminus N}$ are not the identity mappings, we deduce 
\begin{itemize}
	\item $\mathrm{Fix}_{T\setminus N}(\alpha_t)=\gamma n_0\Omega_2(N)$, $n_0^2=t$ and $|N:\Omega_2(N)|=2$, 
	\item $\mathrm{Fix}_{T\setminus N}(\iota\alpha_t)=\gamma n_1\cent N {\gamma}$, $t^{-1}=[n_1, \gamma]$ and $|N:\cent N{\gamma}|=2$,
		\item $|\mathrm{Fix}_{T\setminus N}(\alpha_t)|=|N|/2=|\mathrm{Fix}_{T\setminus N}(\iota\alpha_t)|$.
\end{itemize} 

If $\Omega_2(N)\ne \cent N{\gamma}$ or if $\mathrm{Fix}_{T\setminus N}(\alpha_t)=\mathrm{Fix}_{T\setminus N}(\iota\alpha_t)$, we have $|\mathrm{Fix}_{T\setminus N}(\iota)|\ge |N|/4$, because $\mathrm{Fix}_{T\setminus N}(\iota)$ contains both $\gamma (\Omega_2(N)\cap \cent N{\gamma})$ and $ \mathrm{Fix}_{T\setminus N}(\alpha_t)\cap \mathrm{Fix}_{T\setminus N}(\iota\alpha_t)$. Hence, from~\eqref{covid19_1}, 
the number of orbits of $\langle\iota,\alpha_t\rangle$ on $T\setminus N$ is at most
\begin{eqnarray*}
	&&
	\mathbf{c}(T\setminus N)-\frac{|N|}{4}-\frac{|N|}{16}+\frac{|N|}{8}+\frac{|N|}{8}=\mathbf{c}(\gamma N)-\frac{|N|}{16}
\end{eqnarray*}
and part~\eqref{lemma:aux21} follows again. Assume, at last, $\Omega_2(N)=\cent N{\gamma}$ and $\mathrm{Fix}_{T\setminus N}(\alpha_t)\ne\mathrm{Fix}_{T\setminus N}(\iota\alpha_t)$. Set $M:=\Omega_2(N)=\cent N{\gamma}$. Then $\mathrm{Fix}_{T\setminus N}(\alpha_t)=\gamma M$ and $\mathrm{Fix}_{T\setminus N}(\iota\alpha_t)=\gamma(N\setminus M)$, or 
$\mathrm{Fix}_{T\setminus N}(\alpha_t)=\gamma(N\setminus M)$ and $\mathrm{Fix}_{T\setminus N}(\iota\alpha_t)=\gamma M$. If $\mathrm{Fix}_{T\setminus N}(\alpha_t)=\gamma M$, then $t=1$ and $1=t^{-1}=[\gamma,n_1]$. Thus $n_1\in \cent N{\gamma}=M$ and hence $\mathrm{Fix}_{T\setminus N}(\iota\alpha_t)=\gamma M$, contradicting $\mathrm{Fix}_{T\setminus N}(\iota\alpha_t)=\gamma(N\setminus M)$. Thus $\mathrm{Fix}_{T\setminus N}(\alpha_t)=\gamma(N\setminus M)$ and $\mathrm{Fix}_{T\setminus N}(\iota\alpha_t)=\gamma M$. As $\mathrm{Fix}_{T\setminus N}(\iota\alpha_t)=\gamma M=\gamma \cent N{\gamma}$, we have $n_1\in \cent N{\gamma}$ and hence $t^{-1}=[\gamma,n_1]=1$. Then $n_0^2=t=1$ and hence $\mathrm{Fix}_{T\setminus N}(\alpha_t)=\gamma \Omega_2(N)=\gamma M$, contradicting $\mathrm{Fix}_{T\setminus N}(\alpha_t)=\gamma(N\setminus M)$.
\end{proof}

The next lemma again has a similar flavour. This time we are assuming that the index-$2$ subgroup $N$ of $T$ is generalised dicyclic, and we need to assume that our permutation fixes each of the cosets of the abelian subgroup $A$ of $N$ setwise. 

\begin{lemma}\label{lemma:aux3}
Let $T$ be a finite group, let $N=\Dic(A,y,x)$ be a generalised dicyclic subgroup of $T$ having index $2$, let $t\in N$, let $\gamma\in T\setminus N$, 
let $\alpha_t:T\to T$ be any permutation  defined by 
\begin{eqnarray*}
a^{\alpha_t}\in A, (xa)^{\alpha_t} \in xA, \forall a \in A,\quad\textrm{and}\quad (\gamma n)^{\alpha_t}=\gamma tn^{\bar{\iota}_A},\,\forall n\in N.
\end{eqnarray*}
Recall that $\bar{\iota}_A$ is given in Definition~$\ref{defeq:2}$. Then one of the following holds:
\begin{enumerate}
\item\label{lemma:aux31}$|\{S\subseteq T\mid X=X^{-1}, X^{\alpha_t}=X\}|\le 2^{\mathbf{c}(\gamma N)-\frac{|N|}{24}}$,
\item\label{lemma:aux33}$\gamma^2=y=t$ and $a^\gamma=a^{-1}$ $\forall a\in A$,
\item\label{lemma:aux34}$t=1$, $\langle \gamma, A\rangle$ is abelian, and $T=\Dic(\langle \gamma, A \rangle, y, x)$.
\end{enumerate}
In parts~\eqref{lemma:aux33} and ~\eqref{lemma:aux34}, if $n^{\alpha_t} \in \{n, n^{-1}\}$ for every $n \in N$, then 
we have $z^{\alpha_t}\in \{z,z^{-1}\}$ $\forall x\in T$.
\end{lemma}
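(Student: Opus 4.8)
The plan is to mimic the strategy of Lemmas~\ref{lemma:aux1} and~\ref{lemma:aux2}: we set $\iota\colon T\to T$ the inversion map, let $\mathcal S:=\{X\subseteq T\mid X=X^{-1},\ X^{\alpha_t}=X\}$, and observe that $X\in\mathcal S$ iff $X$ is a union of orbits of the group $H:=\langle\alpha_t,\iota\rangle$ acting on $T$. Since $\alpha_t$ fixes $N$ setwise, every $H$-orbit meeting $\gamma N$ lies in $\gamma N$; the point of the estimate is that $\mathbf c(\gamma N)=|N|/2+|I(\gamma N)|/2$ is the main term, and we want to gain an extra $|N|/24$ by showing that $H$ has few short orbits on $\gamma N$. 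First I would compute $\alpha_t^2$ on $\gamma N$: using $(\gamma n)^{\alpha_t}=\gamma t\,n^{\bar\iota_A}$ and the fact that $\bar\iota_A$ is an involution with $t^{\bar\iota_A}=t$ when $t\in A$ (and more generally tracking $t^{\bar\iota_A}$), one sees $(\gamma n)^{\alpha_t^2}=\gamma\,t\,t^{\bar\iota_A}\,n$, so the relevant ``shift'' element is $w:=t\,t^{\bar\iota_A}$, and $\alpha_t$ restricted to $\gamma N$ is a product of cycles of length $o(w)$ or $2o(w)$ depending on whether $t\in A$. The routine case split is then on $o(w)$: if $o(w)\ge 3$, or if $\alpha_t$ already has all orbits of length $\ge 3$ on $\gamma N$, the bound $|\mathcal S|\le 2^{\mathbf c(N)+|N|/3}\le 2^{\mathbf c(\gamma N)\,+\,\mathbf c(N)-|N|/6}$ gives~\eqref{lemma:aux31} with room to spare; so the real work is when $o(w)\le 2$.

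Next, for $o(w)\le 2$ I would identify the two ways an $\langle\iota\rangle$-orbit $\{\gamma n,(\gamma n)^{-1}\}$ on $\gamma N$ can fail to be fused by $\alpha_t$ with another $\langle\iota\rangle$-orbit, exactly as in Lemma~\ref{lemma:aux2}: either $\alpha_t$ fixes both $\gamma n$ and $(\gamma n)^{-1}$, giving a set $\mathcal A\subseteq N$, or $(\gamma n)^{\alpha_t}=(\gamma n)^{-1}$, giving a set $\mathcal B\subseteq N$. Writing $(\gamma n)^{-1}=n^{-1}\gamma^{-1}=\gamma\cdot(\gamma^{-1}n^{-1}\gamma^{-1})$ and plugging into the defining formula for $\alpha_t$, each of $\mathcal A$ and $\mathcal B$ becomes a coset (or empty) of a subgroup of $N$: $\mathcal A$ is a coset of the solution set of $n\,t\,n^{\bar\iota_A}=$ (something independent of $n$) — which I expect to be a coset of $\{n\in N: n\,n^{\bar\iota_A}=1\}$, equivalently of the fixed subgroup of $\bar\iota_A$ twisted appropriately — and $\mathcal B$ is a coset of $\mathbf C_N(\gamma)$ or of a closely related centraliser. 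The counting inequality then reads
\[
|\mathcal S|\le 2^{\mathbf c(N)+\mathbf c(\gamma N)-\frac{|N|}{4}+\frac{|\mathcal A|}{4}+\frac{|\mathcal B|}{4}-\frac{|\mathcal A\cap\mathcal B|}{2}},
\]
and if $|\mathcal A|,|\mathcal B|\le 2|N|/3$ we are done; so the only surviving configuration is $\mathcal A$ and $\mathcal B$ cosets of index-$2$ subgroups of $N$ with $\mathcal A\cap\mathcal B$ of size $<|N|/4$, forcing those two index-$2$ subgroups to coincide and $\mathcal A,\mathcal B$ to be the two cosets of a single subgroup $M<N$ of index $2$.

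Finally, in that last configuration I would extract the structural conclusions~\eqref{lemma:aux33} and~\eqref{lemma:aux34}. From $|\mathcal B|=|N|/2$ we get $|N:\mathbf C_N(\gamma)|=2$ with $\gamma$ inverting (the relevant half of) $A$; combined with $|\mathcal A|=|N|/2$, which pins down $t\,t^{\bar\iota_A}$ and whether $t\in A$, a short computation with the relations $x^2=y$, $a^x=a^{-1}$ in $N=\Dic(A,y,x)$ and with $\gamma^2\in N$ should force either $t=1$ and $\langle\gamma,A\rangle$ abelian with $T=\Dic(\langle\gamma,A\rangle,y,x)$ — case~\eqref{lemma:aux34} — or $\gamma^2=y=t$ and $a^\gamma=a^{-1}$ for all $a\in A$ — case~\eqref{lemma:aux33}. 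For the very last sentence of the statement, assuming in addition $n^{\alpha_t}\in\{n,n^{-1}\}$ for all $n\in N$, one checks directly in cases~\eqref{lemma:aux33} and~\eqref{lemma:aux34} that the formula $(\gamma n)^{\alpha_t}=\gamma t\,n^{\bar\iota_A}$ together with the abelian/dicyclic structure forces $(\gamma n)^{\alpha_t}\in\{\gamma n,(\gamma n)^{-1}\}$ as well, so $z^{\alpha_t}\in\{z,z^{-1}\}$ for all $z\in T$. The main obstacle I anticipate is the bookkeeping in identifying $\mathcal A$ and $\mathcal B$ as cosets when $t\notin A$: then $\alpha_t$ on $\gamma N$ need not be an involution, the orbit lengths double, and one must be careful that the $\langle\iota\rangle$-orbit-fusion analysis still produces exactly two ``bad'' coset-families rather than more; handling the $t\notin A$ subcase cleanly (perhaps by first reducing to $t\in A$ via replacing $\gamma$ by $\gamma x^{\pm1}$ or by passing to $\alpha_t^2$) is where the argument needs the most care.
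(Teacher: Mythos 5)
Your overall strategy---treat $\gamma N$ as a single block, analyse which $\langle\iota\rangle$-orbits fail to be fused by $\alpha_t$, and extract two ``bad'' coset families $\mathcal A,\mathcal B$ as in Lemma~\ref{lemma:aux2}---is genuinely different from what the paper does, and it founders precisely at the point you yourself flag. The difficulty is that $\bar{\iota}_A$ acts differently on the two halves of $N$: it fixes $A$ pointwise but sends $xa$ to $x^{-1}a$. Consequently each of your defining conditions splits into two \emph{distinct} conditions, one on $A$ and one on $xA$. For instance, $(\gamma n)^{\alpha_t}=\gamma n$ reads $t=1$ for $n\in A$ but $t=x^2$ for $n\in xA$, so $\mathcal A$ is contained entirely in one coset of $A$; and $\mathcal B\cap A$ and $\mathcal B\cap xA$ are (if nonempty) cosets of two possibly different subgroups of $A$, so $\mathcal B$ is not a single coset of an index-$2$ subgroup of $N$ and can in principle have cardinality up to $|N|$. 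Your counting inequality and the entire endgame (``forcing those two index-$2$ subgroups to coincide'') rest on the two-coset picture, which is therefore not available; the structural extraction of conclusions~\eqref{lemma:aux33} and~\eqref{lemma:aux34} is also left as ``a short computation should force,'' which is exactly where the content lies. A secondary issue: your shift element is $w=tt^{\bar\iota_A}$, which equals $1$ whenever $t\in xA$ even though $o(t)=4$ there, so the case split on $o(w)$ does not by itself reduce you to $t$ of small order in $A$.

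The idea you gesture at in your last sentence (``replacing $\gamma$ by $\gamma x^{\pm1}$'') is in fact the paper's route, but carried out differently: after showing $o(t)\ge3$ gives the bound (so $o(t)\le 2$, whence $t\in A$ since every element of $xA$ has order $4$) and disposing of the case $\gamma^2\notin A$, one observes that $\alpha_t$ restricts to permutations of the two overgroups $\langle\gamma,A\rangle$ and $\langle\gamma x,A\rangle$ of $A$, acting by the pure translations $\gamma a\mapsto\gamma ta$ and $\gamma xa\mapsto\gamma x(x^2t)a$. This puts both restrictions squarely in the hypotheses of Lemma~\ref{lemma:aux1} (with parameters $(\gamma,t)$ and $(\gamma x,x^2t)$ respectively), and the exceptional conclusions~\eqref{lemma:aux33} and~\eqref{lemma:aux34} drop out of the four possible combinations of the non-generic outcomes of those two applications. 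If you want to salvage your direct approach you would need to run the $\mathcal A/\mathcal B$ analysis separately on $\gamma A$ and $\gamma xA$, which essentially reconstructs the paper's double application of Lemma~\ref{lemma:aux1}.
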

\begin{proof}
We let $\iota:T\to T$ the permutation defined by $z^\iota=z^{-1}$ $\forall z\in T$.
For simplicity, we let $\mathcal{S}:=\{X\subseteq T\mid X=X^{-1}, X^{\alpha_t}=X\}$.
Observe that, for every $a\in A$, we have $a^{\alpha_t}\in A$ and 
\begin{equation}\label{covid23}(\gamma a)^{\alpha_t}=\gamma t a^{\bar{\iota}_A}=\gamma t a.\end{equation}

Suppose $o(t)\ge 3$. Then the orbits of $\langle \alpha_t\rangle$ on $\gamma A$ all have length $o(t)\ge 3$ and hence
$$|\mathcal{S}|\le 2^{\mathbf{c}(T\setminus (\gamma A\cup \gamma^{-1}A)
)+\frac{|\gamma A|}{3}}\le 2^{\mathbf{c}(T)-\frac{|A|}{2}+\frac{|A|}{3}}=2^{\mathbf{c}(T)-\frac{|A|}{6}}=2^{\mathbf{c}(T)-\frac{|N|}{12}}$$
and part~\eqref{lemma:aux31} follows in this case. In particular, for the rest of the proof we may suppose that $o(t)\le 2$. Since $N$ is generalised dicyclic and $t\in N$, we obtain $t\in A$. Now, for every $a\in A$, we have $(\gamma a)^{\alpha_t}=\gamma t a\in \gamma A$ and hence $\gamma A$ is $\alpha_t$-invariant. Therefore $\alpha_t$ has $|A|/o(t)$ cycles on $\gamma A$. This also means that $\gamma x A$ is $\alpha_t$-invariant.

Suppose that $\gamma^2\notin A$, that is, $\gamma A\ne \gamma^{-1}A$. Then $T/A$ is a cyclic group and $N=\langle \gamma^2,A\rangle$.  If $o(t)\ne 1$, then  
$$|\mathcal{S}|\le 2^{\mathbf{c}(T\setminus (\gamma A\cup \gamma^{-1}A))+\frac{|A|}{2}}=2^{\mathbf{c}(T)-|A|+\frac{|A|}{2}}=2^{\mathbf{c}(T)-\frac{|A|}{2}}=2^{\mathbf{c}(T)-\frac{|N|}{4}}$$
and part~\eqref{lemma:aux31} follows in this case. Suppose then $t=1$. In this case $\alpha_t$ fixes $\gamma A$ pointwise. 
 For every $a\in A$, we have
\begin{equation}\label{covid22}(\gamma^{-1}a)^{\alpha_t}=(\gamma (\gamma^{-2}a))^{\alpha_t}=\gamma (\gamma^{-2}a)^{\bar{\iota}_A}=\gamma \gamma^{2}a=\gamma^{3}a.
\end{equation}
As $\langle \gamma^2,A\rangle=N=\Dic(A,y,x)$ and as all elements in $N\setminus A$ have order $4$, we deduce $o(\gamma^2)=4$ and $o(\gamma)=8$. In particular, $\gamma^3\ne \gamma^{-1}$ and from~\eqref{covid22}
 we deduce that $\alpha_t$ has no fixed points on $\gamma^{-1}A$. Hence $\alpha_t$ has at most $|A|/2$ cycles on $\gamma^{-1}A$. Therefore 
$$|\mathcal{S}|\le 2^{\mathbf{c}(T\setminus (\gamma A\cup \gamma^{-1}A))+\frac{|A|}{2}}=2^{\mathbf{c}(T)-|A|+\frac{|A|}{2}}=2^{\mathbf{c}(T)-\frac{|A|}{2}}=2^{\mathbf{c}(T)-\frac{|N|}{4}}$$
and part~\eqref{lemma:aux31} follows in this case. 

Henceforth we may assume that $\gamma^2\in A$. Then $\langle \gamma,A\rangle$ is a group having a subgroup $A$ of index $2$. Furthermore, since both $N=\langle x, A \rangle$ and $\langle \gamma,A\rangle$ are index-$2$ subgroups of $T$, we must have $(\gamma x)^2 \in N \cap \langle \gamma,A\rangle=A$. Also, since $\gamma$ and $x$ both normalise $A$, so does $\gamma x$. So $\langle \gamma x,A \rangle$ is a group having a subgroup of index $2$ and $\alpha_t$ restricts to a permutation of $\langle \gamma x,A \rangle$. Since $t \in A$ and $o(t) \le 2$ we see that $x$ and $t$ commute, so for every $a \in A$ we have 
\begin{equation}\label{covid24}(\gamma x a)^{\alpha_t}=\gamma t(xa)^{\bar{\iota}_A}=\gamma tx^{-1}a=
\gamma x^{-1}t a=\gamma x(x^{2}t)a.
\end{equation}
So we can apply Lemma~\ref{lemma:aux1} to the group $\langle \gamma x, A\rangle$ and the permutation $(\alpha_t)_{|\langle \gamma x, A\rangle}$ with $\gamma x$ taking the role of the ``$\gamma$" in that lemma, and $x^2t$ taking the role of ``$t$."

If part~\eqref{lemma:aux11} in Lemma~\ref{lemma:aux1} holds, then
$$|\mathcal{S}|\le 2^{\mathbf{c}(T\setminus\langle\gamma x,A\rangle)+\mathbf{c}(\langle\gamma x,A\rangle)-\frac{|A|}{16}}=2^{\mathbf{c}(T)-\frac{|N|}{32}}$$
and conclusion~\eqref{lemma:aux31} holds.

If part~\eqref{lemma:aux12} in Lemma~\ref{lemma:aux1} holds, then $A$ is an elementary abelian $2$-group, but this contradicts our definition of a generalised dicyclic group together with our hypothesis that $N$ is such a group.

So either part~\eqref{lemma:aux13} in Lemma~\ref{lemma:aux1} holds, so that $o(x^2t)=2$, $x^2t=(\gamma x)^2$, and $\langle \gamma x, A\rangle=\Dic(A,(\gamma x)^2, \gamma x)$; or part~\eqref{lemma:aux14} holds, so that $x^2t=1$, meaning $x^2=t$. We postpone further consideration of these cases briefly.

We can also apply Lemma~\ref{lemma:aux1} to the group $\langle \gamma, A\rangle$ and the permutation $\alpha_t$. In this case $\gamma$ takes the role of ``$\gamma$" in the lemma, and $t$ takes the role of ``$t$".

If part~\eqref{lemma:aux11} in Lemma~\ref{lemma:aux1} holds, then
$$|\mathcal{S}|\le 2^{\mathbf{c}(T\setminus\langle\gamma,A\rangle)+\mathbf{c}(\langle\gamma,A\rangle)-\frac{|A|}{16}}=2^{\mathbf{c}(T)-\frac{|N|}{32}}$$
and conclusion~\eqref{lemma:aux31} holds. 

If part~\eqref{lemma:aux12} in Lemma~\ref{lemma:aux1} holds, then $A$ is an elementary abelian $2$-group, again a contradiction.

So either part~\eqref{lemma:aux13} in Lemma~\ref{lemma:aux1} holds, so that $o(t)=2$, $t=\gamma^2$, and $\langle \gamma, A \rangle =\Dic(A, t,\gamma)$; or part~\eqref{lemma:aux14} of Lemma~\ref{lemma:aux1} holds, so that $t=1$. 

We have now applied Lemma~\ref{lemma:aux1} to two different subgroups of $T$, and have completed the proof except in the cases where parts~\eqref{lemma:aux13} or~\eqref{lemma:aux14} arise from both applications. We now consider these final four possible outcomes individually.

It is not possible that part~\eqref{lemma:aux14} holds in both applications, since this would imply that $t=1$ and $x^2=t$, contradicting $o(x)=4$ from the definition of a generalised dicyclic group.

If part~\eqref{lemma:aux13} holds in both applications, then $\langle \gamma x, A\rangle=\Dic(A,(\gamma x)^2, \gamma x)$ implies that $a^{\gamma x}=a^x=a^{-1}$, so $a^\gamma=a$  for every $a \in A$. But $\langle \gamma, A \rangle =\Dic(A, t,\gamma)$ implies that $a^\gamma=a^{-1}$ for every $a \in A$. Taken together, these imply that $A$ is an elementary abelian $2$-group, again a contradiction.

If part~\eqref{lemma:aux13} holds in the first application and part~\eqref{lemma:aux14} holds in the second, then we have $t=1$, ($o(x^2t)=2$), $x^2t=(\gamma x)^2$, and $\langle \gamma x, A\rangle=\Dic(A,(\gamma x)^2, \gamma x)$. Since $\langle \gamma x, A\rangle=\Dic(A,(\gamma x)^2, \gamma x)$, we see that $a^{\gamma x}=a^x=a^{-1}$, so $a^\gamma=a$  for every $a \in A$, and $\langle \gamma, A \rangle$ is abelian. Since $x^2t=x^2=(\gamma x)^2$, we have $\gamma^x=\gamma^{-1}$, so $T=\Dic(\langle \gamma, A\rangle, y, x)$. This is conclusion~\eqref{lemma:aux34}.

Finally, if part~\eqref{lemma:aux14} holds in the first application and part~\eqref{lemma:aux13} holds in the second, then we have $y=x^2=t$, $o(t)=2$, $t=\gamma^2$, and $\langle \gamma, A \rangle =\Dic(A, t,\gamma)$. This is conclusion~\eqref{lemma:aux33}.

\end{proof}

With these preliminary results in hand, we are ready to prove bounds on the number of connection sets that admit various types of graph automorphisms. Recall Notation~\ref{Subdivision}. We already have bounds on $|\mathcal S_N^1|$ and on $|\mathcal U_N|$. Our goal in this section is to bound $|\mathcal T_N|$ when $|N|$ is relatively large. In order to do this, we need to further subdivide $\mathcal T_N$.

\begin{notation}\label{notationstep1}
{\rm For what follows, $R$ is a group that is neither generalised dicyclic, nor abelian of exponent greater than $2$. We let $N$ be normal subgroup of $R$ and we let 
\begin{align*}
\mathcal{T}_N^1&:=\{S\in \mathcal{S}_N\setminus\mathcal{S}_N^1\mid&&  \exists x\in R\textrm{ and }\exists f\in \norm{\Aut(\Cay(R,S))}{N}\textrm{ with }1^f=1\textrm{ and }(xN)^f\notin \{xN,x^{-1}N\}\},\\
\mathcal{T}_N^2&:=\{S\in \mathcal{S}_N\setminus \mathcal{S}_N^1\setminus\mathcal T_N^1\mid&&\exists f\in \norm{\Aut(\Cay(R,S))}N\setminus\cent{\Aut(\Cay(R,S))}N \textrm{ with } 1^f=1 \textrm{ and }\\
& && N \textrm{ is neither abelian of exponent greater than 2 nor generalised dicyclic, or}\\
& && N\textrm{ is abelian of exponent greater than 2 and }n^{f}\ne n^{-1} \textrm{ for some } n\in N, \textrm{ or}\\
& &&N=\Dic(A,y,x)\not\cong Q_8\times C_2^\ell\textrm{  and }n^f\ne n^{\bar{\iota}_A} \textrm{ for some }n\in N, \textrm{ or}\\
& &&N\cong Q_8\times C_2^\ell \textrm{ and } n^f\notin \{n^{\bar{\iota}_i},n^{\bar{\iota}_j},n^{\bar{\iota}_k}\} \textrm{ for some }n\in N\},\\
\mathcal{T}_N^3&:=\{S\in\mathcal{S}_N\setminus\mathcal S_N^1 \setminus\bigcup_{\ell=1}^2\mathcal{T}_N^\ell\mid&& 
\exists x\in R\textrm{ and }\exists f\in \norm{\Aut(\Cay(R,S))}{N}\textrm{ with }1^f=1, (xN)^f\ne xN \textrm{ and }\\
& &&\textrm{either }N \textrm{ is non-abelian or there exists }n\in N \textrm{ with }(xn)^f\ne (xn)^{-1}\},\\
\mathcal{T}_N^4&:=\{S\in \mathcal{S}_N\setminus \mathcal S_N^1\setminus\bigcup_{\ell=1}^3\mathcal{T}_N^\ell\mid&&  \exists x\in R\textrm{ and }\exists f\in \norm{\Aut(\Cay(R,S))}{N}\textrm{ with }1^f=1\textrm{ and }x^f\notin \{x,x^{-1}\}\}.
\end{align*}}
It should be clear from this definition that $$\mathcal T_N=\bigcup_{\ell=1}^4 \mathcal T_N^\ell.$$
\end{notation}

We will bound the cardinality of each of these sets. Most of the bounds we find will only be vanishingly small relative to $2^{\mathbf c(R)}$ if $|N|$ is relatively large compared to $|R|$. Specifically, they will all work if $|N| \ge 9\log_2|R|$. In order to create the best possible bound, however, we will want to balance $|N|$ against $|R/N|$, so we will use these bounds only when $|N| \ge \sqrt{|R|}$.

The first bound is only useful if $|N|/2$ dominates $2\log_2|R|$. In particular, it will be useful if $|N| \ge 5\log_2|R|$.

\begin{proposition}\label{step2}
We have $|\mathcal{T}_N^1|\le 2^{\mathbf{c}(R)-\frac{|N|}{2}+2\log_2|R|-\log_2|N|+(\log_2|N|)^2+2}$.
\end{proposition}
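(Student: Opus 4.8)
The plan is to fix $S \in \mathcal{T}_N^1$ together with the vertex-$1$-fixing automorphism $f \in \norm{\Aut(\Cay(R,S))}{N}$ and element $x \in R$ witnessing $(xN)^f \notin \{xN, x^{-1}N\}$, and to count the possibilities. First I would package the ``combinatorial data'' attached to $f$: since $f$ normalises $N$ and fixes $1$, it induces a permutation $\bar f$ of the set of $N$-cosets $R/N$; write $\psi$ for the action of $f$ on $R/N$. The key point is that $S$ is determined on a large ``independent'' portion of $R$ by the constraint $S^f = S$ together with $S = S^{-1}$. Concretely, I would look at the union $W = xN \cup (xN)^f$, or more precisely the $\langle \psi, \iota\rangle$-orbit of $xN$ on $R/N$ where $\iota$ is inversion on cosets; because $(xN)^f \notin \{xN, x^{-1}N\}$, this orbit has size at least $3$, hence the corresponding union of cosets in $R$ has size at least $3|N|$ but, crucially, the pairing imposed by $f$ together with inversion links cosets together so that $S$ restricted to this union is constrained: roughly, once $S$ is chosen on one of every $\langle \psi,\iota\rangle$-orbit-representative coset, the values on the rest of the orbit are forced. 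This saves a factor of at least $|N|$ (one coset's worth of free choices) beyond the generic count $2^{\mathbf c(R)}$, and I would organise the bookkeeping exactly as in the proof of Lemma~\ref{lemma:aux1}, splitting $R$ into the $\langle\psi,\iota\rangle$-invariant pieces and counting inverse-closed $f$-invariant subsets on each.

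The second half is to account for the number of choices of the auxiliary data $(f, x)$, or rather enough of it to pin down the constraint. Here I would argue that we do not need $f$ itself but only its induced permutation $\psi$ on $R/N$ (to know which cosets get glued) and, within each glued block of cosets, the bijection $f$ induces between them. The number of choices for $\psi$ is at most $|R/N|! \le |R/N|^{|R/N|} \le 2^{(\log_2|R|)^2}$ — but that is far too wasteful; instead I expect the intended route is: $\psi$ is a permutation of the $|R/N|$ cosets that is nontrivial, and we only need it up to the data of one orbit, so something like $|R|^{2}$ choices for the pair $(x, x^f)$ determining the relevant orbit, contributing the $2\log_2|R|$ term; then $f$ restricted to the relevant union of cosets is a bijection respecting the group structure on each $N$-coset, i.e.\ essentially determined by an element of $\Aut(N)$-many choices per coset-pair, and using Remark~\ref{rem : 1} this is at most $2^{(\log_2|N|)^2}$; the $-\log_2|N|$ and $+2$ are lower-order adjustments coming from the precise orbit-counting (e.g.\ $\mathbf c$ versus raw cardinality, and the at-least-$3$ versus exactly-$3$ orbit sizes, as in Lemma~\ref{lemma:aux1}). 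Multiplying the count $2^{\mathbf c(R) - |N|/2 + (\text{small})}$ of connection sets per datum by the number $2^{2\log_2|R| - \log_2|N| + (\log_2|N|)^2 + (\text{small})}$ of data gives the claimed bound.

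I would carry out the steps in this order: (1) set up $\bar f = \psi$ on $R/N$ and the auxiliary inversion $\iota$ on cosets, observe $S$ is $\langle f, \iota \rangle$-invariant; (2) isolate the $\langle\psi,\iota\rangle$-orbit $\mathcal O$ of $xN$ on $R/N$, noting $|\mathcal O| \ge 3$; (3) do the orbit-counting bound on the number of inverse-closed $f$-invariant subsets of $R$, getting a factor of $2^{-|N|/2}$ or better from the cosets in $\mathcal O$, exactly mirroring the computation in Lemma~\ref{lemma:aux1}\eqref{lemma:aux11}; (4) bound the number of relevant choices of auxiliary data $(\psi$-orbit, $f$-on-that-orbit$)$ by $2^{2\log_2|R| - \log_2|N| + (\log_2|N|)^2 + 2}$ using Remark~\ref{rem : 1}; (5) combine. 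The main obstacle is step (3)–(4): making the orbit-counting genuinely give $|N|/2$ rather than, say, $|N|/4$, which requires being careful that the $\langle \psi, \iota\rangle$-orbit on cosets of size $\ge 3$ really does force $S$ on two of every three cosets (not just ``links'' them loosely) — one has to check that the combined action of $f$ and $\iota$ on the union $\bigcup_{yN \in \mathcal O} yN$ has all orbits of size $\ge 2$ and that a positive proportion forces the $|N|/2$ saving; this is the same subtlety handled in the $\mathcal A, \mathcal B$ analysis of Lemma~\ref{lemma:aux1}, and the $\pm$ logarithmic fudge terms in the statement are precisely what one accumulates from doing that bookkeeping honestly.
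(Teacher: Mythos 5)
Your plan is essentially the paper's own argument: parametrise the restriction $f_{|xN}:xN\to yN$ via the formula $(xn)^f=ytn^{\iota_f}$ (so by coset data, an element of $N$, and an automorphism of $N$ bounded by Remark~\ref{rem : 1}), then observe that $f$-invariance plus inverse-closure of $S$ forces $S$ on at least one full coset's worth of $\mathbf c$-value, saving at least $|N|/2$. Two bookkeeping points in your write-up need correction, though neither destroys the approach. First, your claim that the $\langle\psi,\iota\rangle$-orbit of $xN$ on $R/N$ has size at least $3$ is false: when $o(xN)=o(yN)=2$ and $f$ interchanges $xN$ and $yN$, the orbit is $\{xN,yN\}$ of size $2$. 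This is in fact the extremal configuration (the paper's case where both cosets are inverse-closed), and there the saving is exactly $\mathbf c(yN)\ge |N|/2$ rather than the ``at least $|N|$'' your size-$\ge 3$ heuristic suggests; your step (3) target of $2^{-|N|/2}$ is still met, but the case analysis must explicitly include this size-$2$ possibility rather than exclude it. Second, counting the auxiliary data by the pair $(x,x^f)\in R\times R$ gives $|R|^2$ choices, i.e.\ $+2\log_2|R|$ in the exponent, which misses the $-\log_2|N|$ in the stated bound; the $-\log_2|N|$ is not an orbit-counting adjustment but comes precisely from parametrising instead by the two cosets $xN,yN$ and the element $t\in N$ with $x^f=yt$, giving $|R/N|^2\cdot|N|=|R|^2/|N|$ choices (times $|\Aut(N)|\le 2^{(\log_2|N|)^2}$). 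With those two repairs, and the final union over the four parity cases of $o(xN),o(yN)$ supplying the $+2$, your argument reproduces the paper's proof.
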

\begin{proof}
Let $S\in\mathcal{T}_N^1$ and set $G_S:=\norm{\Aut(\Cay(R,S))}{N}$. Say, $(xN)^f=yN$, for some $xN,yN\in R/N$ with $yN\notin\{xN,x^{-1}N\}$ and for some $f\in G_S$ with $1^f=1$. Now, $x^f=yt$, for some $t\in N$. Observe that
\begin{equation}\label{virusvirus}(xn)^f=x^{nf}=x^{f(f^{-1}nf)}=ytn^{\iota_f},
\end{equation}
where we are denoting by $\iota_f:N\to N$ the automorphism induced by the conjugation via $f$ on $N$. Observe that we have at most $|\Aut(N)|\le 2^{(\log_2|N|)^2}$ choices for the automorphism $\iota_f$. Therefore, as $t\in N$, given $xN$ and $yN$, we deduce from~\eqref{virusvirus} that we have at most $|N|2^{(\log_2|N|)^2}$ choices for the permutation $f_{|xN}:xN\to yN$ restricted to $xN$.

We consider various possibilities: 
\begin{itemize}
\item[(i)] $o(xN)=o(yN)=2$, or
\item[(ii)] $o(xN)>2$ and $o(yN)>2$, or 
\item[(iii)] $o(xN)=2$ and $o(yN)>2$, or 
\item[(iv)] $o(xN)>2$ and $o(yN)=2$.
\end{itemize} 
We consider these cases in turn: we let $\mathcal{B}_i,\mathcal{B}_{ii},\mathcal{B}_{iii},\mathcal{B}_{iv}$ be the subsets of $\mathcal{S}_N^2$ satisfying, respectively, (i),~(ii),~(iii) or~(iv). In the first case, the number of inverse-closed subsets of $R\setminus (xN\cup yN)$ is $2^{\mathbf{c}(R)-\mathbf{c}(xN)-\mathbf{c}(yN)}$ and the number of inverse-closed $f$-invariant subsets $T$ of $xN\cup yN$ is at most $2^{\mathbf{c}(xN)}$, because once $T\cap xN$ has been chosen the set $T\cap yN$ must equal $(T\cap xN)^f$. Therefore
\begin{eqnarray*}
|\mathcal{B}_i|&\le& 
|N|2^{(\log_2|N|)^2}|R/N|^2 2^{\mathbf{c}(R)-\mathbf{c}(xN)-\mathbf{c}(yN)}\cdot 2^{\mathbf{c}(xN)}\\
&=& 
2^{\mathbf{c}(R)-\mathbf{c}(yN)+2\log_2|R|-\log_2|N|+(\log_2|N|)^2}
\le 2^{\mathbf{c}(R)-\frac{|N|}{2}+2\log_2|R|-\log_2|N|+(\log_2|N|)^2}.
\end{eqnarray*}

 In the second case, the number of inverse-closed subsets of $R\setminus (xN\cup yN\cup x^{-1}N\cup y^{-1}N)$ is $2^{\mathbf{c}(R)-2|N|}$ and the number of inverse-closed $f$-invariant subsets $T$ of $xN\cup yN\cup x^{-1}N\cup y^{-1}N$ is at most $2^{|N|}$, because once  $T\cap xN$ has been chosen we must have $T\cap x^{-1}N=(T\cap xN)^{-1}$, $T\cap yN=(T\cap xN)^f$ and $T\cap y^{-1}=((T\cap xN)^f)^{-1}$. Therefore
\begin{eqnarray*}
|\mathcal{B}_{ii}|&\le& |N|2^{(\log_2|N|)^2}|R/N|^2 2^{\mathbf{c}(R)-2|N|}\cdot 2^{|N|}= 
2^{\mathbf{c}(R)-|N|+2\log_2|R|-\log_2|N|+(\log_2|N|)^2}.
\end{eqnarray*}

In the third case, the number of inverse-closed subsets of $R\setminus (xN\cup yN\cup y^{-1}N)$ is $2^{\mathbf{c}(R)-\mathbf{c}(xN)-|N|}$ and the number of inverse-closed $f$-invariant subsets of $xN\cup yN\cup y^{-1}N$ is at most $2^{|N|}$, because once we choose a subset of $xN$ all the others are uniquely determined. Therefore
\begin{eqnarray*}
|\mathcal{B}_{iii}|&\le& |N|2^{(\log_2|N|)^2}|R/N|^22^{\mathbf{c}(R)-\mathbf{c}(xN)-|N|}\cdot 2^{|N|}\le 2^{\mathbf{c}(R)-\frac{|N|}{2}+2\log_2|R|-\log_2|N|+(\log_2|N|)^2}.
\end{eqnarray*}
The fourth case is similar to the third case and we have
$|\mathcal{B}_{iv}|\le 2^{\mathbf{c}(R)-\frac{|N|}{2}+2\log_2|R|-\log_2|N|+(\log_2|N|)^2}.$

The proof now follows by adding the contribution of the four sets $\mathcal{B}_i$, $\mathcal{B}_{ii}$, $\mathcal{B}_{iii}$ and $\mathcal{B}_{iv}$. 
\end{proof}

Our second bound is useful whenever $|N|$ grows with $|R|$.

\begin{proposition}\label{step3}
We have $|\mathcal{T}_N^2|\le 2^{\mathbf{c}(R)-\frac{|N|}{96}+(\log_2|N|)^2}$.
\end{proposition}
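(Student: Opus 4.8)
The plan is, for each $S\in\mathcal{T}_N^2$, to fix once and for all a witnessing permutation $f$ — that is, $f\in\norm{\Aut(\Cay(R,S))}{N}\setminus\cent{\Aut(\Cay(R,S))}{N}$ with $1^f=1$ and satisfying whichever of the four conditions in Notation~\ref{notationstep1} applies to $N$ — and then to reduce the enumeration to counting inverse-closed subsets of $N$ that are invariant under a suitable automorphism of $N$.

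First I would unwind the structural meaning of $f$. Since $f$ normalises $N$ (viewed, as throughout the paper, as the group of right translations of $R$) and fixes the vertex $1$, the $N$-orbit of $1$, which is exactly the subset $N\subseteq R$, is $f$-invariant, and on it $f$ acts as the automorphism $\iota_f$ of $N$ induced by conjugation; moreover $\iota_f\neq 1$ because $f\notin\cent{\Aut(\Cay(R,S))}{N}$, and for $n\in N$ the point-image $n^f$ and the conjugation-image $n^{\iota_f}$ coincide, so the displayed conditions in Notation~\ref{notationstep1} are genuinely conditions on $\iota_f$. Because $f$ fixes $1$ it fixes the neighbourhood $S$ of $1$ setwise, hence $S\cap N$ is an inverse-closed, $\iota_f$-invariant subset of $N$, while $S\cap(R\setminus N)$ is an arbitrary inverse-closed subset of the inverse-closed set $R\setminus N$; since $\mathbf I(R)$ is the disjoint union of $\mathbf I(N)$ and $\mathbf I(R\setminus N)$ we have $\mathbf{c}(R\setminus N)=\mathbf{c}(R)-\mathbf{c}(N)$, so Lemma~\ref{lemma111} gives $2^{\mathbf{c}(R)-\mathbf{c}(N)}$ choices for $S\cap(R\setminus N)$, and $S$ is the disjoint union of its two pieces.

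The heart of the argument is to apply Theorem~\ref{l:aut} to the non-identity automorphism $\iota_f$ of $N$ and to observe that the four alternatives in the definition of $\mathcal{T}_N^2$ are precisely designed to forbid conclusions~\eqref{eq:aut01} and~\eqref{eq:aut02} of that theorem. Conclusion~\eqref{eq:aut02} would make $N$ abelian of exponent greater than $2$ with $\iota_f$ the inversion map; this is incompatible with the first bullet (and, since a generalised dicyclic group is never abelian, with the last two bullets as well) and is ruled out by the second bullet. Conclusion~\eqref{eq:aut01} would make $N$ generalised dicyclic over the abelian index-$2$ subgroup $\cent{N}{\iota_f}$ with $\iota_f=\bar{\iota}_{\cent{N}{\iota_f}}$; by the uniqueness statements recorded in Definition~\ref{defeq:2} this forces $\iota_f=\bar{\iota}_A$ when $N=\Dic(A,y,x)\not\cong Q_8\times C_2^\ell$ and $\iota_f\in\{\bar{\iota}_i,\bar{\iota}_j,\bar{\iota}_k\}$ when $N\cong Q_8\times C_2^\ell$, contradicting the third and fourth bullets respectively, and it is incompatible with the first and second bullets since it requires $N$ to be a non-abelian generalised dicyclic group. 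Hence conclusion~\eqref{eq:aut00} holds, so the number of $\iota_f$-invariant inverse-closed subsets of $N$ is at most $2^{\mathbf{c}(N)-|N|/96}$.

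It then remains to assemble the count: the map sending $S$ to the triple $(\iota_f,\,S\cap N,\,S\cap(R\setminus N))$ is injective, its first coordinate ranges over a subset of $\Aut(N)$ of size at most $|\Aut(N)|\le 2^{(\log_2|N|)^2}$ by Remark~\ref{rem : 1}, its second over at most $2^{\mathbf{c}(N)-|N|/96}$ sets by the previous paragraph, and its third over $2^{\mathbf{c}(R)-\mathbf{c}(N)}$ sets; multiplying gives $|\mathcal{T}_N^2|\le 2^{(\log_2|N|)^2}\cdot 2^{\mathbf{c}(N)-|N|/96}\cdot 2^{\mathbf{c}(R)-\mathbf{c}(N)}=2^{\mathbf{c}(R)-|N|/96+(\log_2|N|)^2}$. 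I expect the only genuinely delicate part to be the case analysis in the third paragraph — matching the four clauses defining $\mathcal{T}_N^2$ against the exceptional conclusions of Theorem~\ref{l:aut}, in particular invoking the fact that the generalised dicyclic structure of $N$, and hence the automorphism $\bar{\iota}_A$, is unique unless $N\cong Q_8\times C_2^\ell$; the remaining steps are routine bookkeeping with $\mathbf{c}(\cdot)$ and a union bound.
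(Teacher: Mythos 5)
Your proposal is correct and follows essentially the same route as the paper's proof: decompose $S$ into $S\cap N$ and $S\cap(R\setminus N)$, note that the four defining clauses of $\mathcal{T}_N^2$ exclude conclusions~\eqref{eq:aut01} and~\eqref{eq:aut02} of Theorem~\ref{l:aut} for $\iota_f$, and multiply the resulting bound $2^{\mathbf{c}(N)-|N|/96}$ by $|\Aut(N)|\le 2^{(\log_2|N|)^2}$ and by $2^{\mathbf{c}(R)-\mathbf{c}(N)}$. The paper states the case analysis more tersely, but the argument is the same.
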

\begin{proof}
Given $S\in\mathcal{T}_N^2$, we let $G_S:=\norm {\Aut(\Cay(R,S))}N$. Given $f\in (G_S)_1$, we let $\iota_f:N\to N$ denote the automorphism induced by the action of conjugation of $f$ on $N$.  Let $f\in (G_S)_1\setminus\cent{(G_S)_1}N$ witnessing that $S\in\mathcal{T}_N^2$,  that is,  
\begin{itemize}
\item $N$ is neither an abelian group of exponent greater than $2$ nor a generalised dicyclic group, or
\item $N$ is an abelian group of exponent greater than $2$ and $\iota_f\ne \iota$ (where $\iota:N\to N$ is defined by $x^\iota=x^{-1}$, for every $x\in N$), or
\item $N=\Dic(A,x,y)\not\cong Q_8\times C_2^\ell$ and $\iota_f\ne \bar{\iota}_{A}$ (where $\bar{\iota}_{A}$ is given in Definition~\ref{defeq:2}), or
\item $N\cong Q_8\times C_2^\ell$ and $\iota_f\notin\{\bar{\iota}_i,\bar{\iota}_j,\bar{\iota}_k\}$ (where $\bar{\iota}_{i},\bar{\iota}_j,\bar{\iota}_k$ are given in Definition~\ref{defeq:2}).
\end{itemize}

In each of these cases, by Theorem~\ref{l:aut} applied to $N$, we deduce that the number of $f$-invariant inverse-closed subsets of $N$ is at most $2^{\mathbf{c}(N)-|N|/96}$. In particular, 
$$|\mathcal{T}_N^2|\le 2^{\mathbf{c}(R\setminus N)}\cdot 2^{\mathbf{c}(N)-\frac{|N|}{96}}|\Aut(N)|\le 2^{\mathbf{c}(R)-|N|/96+(\log|N|)^2},$$ 
where the first factor accounts for the number of inverse-closed subsets of $R\setminus N$, the second factor accounts for the number of inverse-closed $f$-invariant subsets of $N$ and the third factor accounts for the number of choices of $\iota_f$.
\end{proof}

For our third bound to be useful, we need $|N|/8$ to dominate $\log_2|R|$. In particular, it will be useful if $|N| \ge 9\log_2|R|$.

\begin{proposition}\label{step4}
We have $|\mathcal{T}_N^3|\le 2^{\mathbf{c}(R)-\frac{|N|}{8}+\log_2|R|+(\log_2|N|)^2}$.
\end{proposition}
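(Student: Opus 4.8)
The plan is to mimic the structure of Propositions~\ref{step2} and~\ref{step3}, but now exploiting the machinery of Lemmas~\ref{lemma:aux1},~\ref{lemma:aux2} and~\ref{lemma:aux3}. Fix $S \in \mathcal{T}_N^3$ and set $G_S := \norm{\Aut(\Cay(R,S))}{N}$, so there are $x \in R$ and $f \in (G_S)_1$ with $(xN)^f \ne xN$, and either $N$ is non-abelian or $(xn)^f \ne (xn)^{-1}$ for some $n \in N$. Since $S \notin \mathcal{T}_N^1$, we must have $(xN)^f = x^{-1}N$ (it cannot be $xN$ by choice of $f$, and it cannot be anything outside $\{xN, x^{-1}N\}$ since $S \notin \mathcal{T}_N^1$). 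Likewise $S \notin \mathcal{T}_N^2$ forces $\iota_f$ (the automorphism of $N$ induced by conjugation by $f$) to be exactly the ``expected'' inverting-type automorphism: $\iota_f = \iota$ if $N$ is abelian of exponent $>2$, $\iota_f = \bar\iota_A$ if $N = \Dic(A,y,x) \not\cong Q_8 \times C_2^\ell$, one of $\bar\iota_i, \bar\iota_j, \bar\iota_k$ if $N \cong Q_8 \times C_2^\ell$, and (since $S \notin \mathcal{S}_N^1 \cup \mathcal{T}_N^2$ and using Proposition~\ref{propo:aut}/Theorem~\ref{l:aut}) $\iota_f = 1$ if $N$ is neither of these. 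The key point is that in every case $f$ acts on each coset of $N$ (or of $A$, in the dicyclic case) in exactly the way prescribed in the hypotheses of one of Lemmas~\ref{lemma:aux1}--\ref{lemma:aux3}, applied to the group $T := \langle N, x' \rangle$ where $x'$ is a suitable representative so that $T/N$ (or $T/A$) has order $2$ and $x \in T \setminus N$.

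More precisely: because $(xN)^f = x^{-1}N$, we get $x^f = x^{-1}t$ for some $t \in N$, and then $(xn)^f = x^f (f^{-1}nf) = x^{-1}t\,n^{\iota_f}$ exactly as in~\eqref{virusvirus}. Setting $T := \langle N, x \rangle$ and $\gamma := x^{-1}$ (so $\gamma \in T \setminus N$), one checks that $(\gamma n)^{\alpha} = \gamma t' n^{\iota_f}$ for an appropriate $t' \in N$ depending on $t$ and $x$; this is precisely the form of $\alpha_t$ in Lemma~\ref{lemma:aux1} (case $\iota_f = 1$), Lemma~\ref{lemma:aux2} (case $N$ abelian of exponent $>2$, $\iota_f = \iota$), or Lemma~\ref{lemma:aux3} (case $N$ generalised dicyclic, $\iota_f = \bar\iota_A$), with the permutation on $N$ itself being $n \mapsto n^{\iota_f} \in \{n, n^{-1}\}$ only on the relevant part. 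The number of $f$-invariant inverse-closed subsets of $T$ is then bounded by the conclusion of the relevant lemma; we combine this with $2^{\mathbf{c}(R \setminus T)}$ for the free choices outside $T$, and with the counting factors for the number of choices of $x$ (at most $|R|$, contributing $\log_2|R|$), the coset $xN$ and representative $t$ (absorbed, since $\gamma$ and $t'$ are determined by $x$ and $f$ up to $N$), and $\iota_f$ (at most $|\Aut(N)| \le 2^{(\log_2|N|)^2}$). Each lemma gives a saving of roughly $|N|/c$ with $c \in \{16, 24, 32\}$; the worst of these, together with the case $N \not\cong \Dic \cdot$, abelian where $\iota_f = 1$ forces us into Lemma~\ref{lemma:aux1} with $t' = 1$ — but here we must separately argue that the exceptional conclusions of the lemmas cannot all occur, which is where the hypothesis ``$N$ non-abelian or $(xn)^f \ne (xn)^{-1}$ for some $n$'' is used: it rules out exactly the degenerate conclusions (parts~\eqref{lemma:aux14},~\eqref{lemma:aux22},~\eqref{lemma:aux34}, and the benign ``$x^{\alpha}\in\{x,x^{-1}\}$ for all $x$'' extensions) of those lemmas.

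The main obstacle I expect is the bookkeeping around the exceptional cases of Lemmas~\ref{lemma:aux1}--\ref{lemma:aux3}. When one of those lemmas delivers an exceptional conclusion rather than the clean bound $2^{\mathbf{c}(T)-|N|/c}$, we need to show that conclusion is incompatible with $S \in \mathcal{T}_N^3$ — i.e. with the combination ``$S \notin \mathcal{T}_N^1 \cup \mathcal{T}_N^2$'' (which pins down $\iota_f$ and the coset action) and ``$N$ non-abelian or $(xn)^f \ne (xn)^{-1}$ for some $n$''. In the conclusions where $T$ turns out to be generalised dicyclic over some abelian subgroup containing $\gamma$, or abelian, one has to trace back: either this makes $R$ itself abelian of exponent $>2$ or generalised dicyclic (contradicting Notation~\ref{notationstep1}), or it makes $f$ act on all of $T$ by $x \mapsto x^{\pm 1}$, which combined with the coset-level statement $(xN)^f = x^{-1}N$ contradicts the defining property of $\mathcal{T}_N^3$ that $(xn)^f \ne (xn)^{-1}$ for some $n$ (or the non-abelianness clause). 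The final bound $2^{\mathbf{c}(R)-|N|/8+\log_2|R|+(\log_2|N|)^2}$ is comfortably weaker than $1/16$ or $1/24$ or $1/32$ of $|N|$ after the $\mathbf{c}(R)-\mathbf{c}(T) = \mathbf{c}(R\setminus T)$ accounting (since $|T| = 2|N|$, a saving of $|N|/16$ from $\mathbf{c}(T)$ translates to a saving of $|N|/16 \ge |N|/8 \cdot \tfrac12$... actually one should check $|T|/32 = |N|/16 \ge |N|/8$ fails, so the exponent must come from the *largest* index, namely Lemma~\ref{lemma:aux3}'s $|N|/24$ giving $2|N|/24 = |N|/12$, still short of $|N|/8$) — so in fact I would expect the stated $|N|/8$ to be the worst case $|N|/48 \cdot$ something; the careful tallying of which lemma governs which case, and confirming the arithmetic yields at least $|N|/8$, is the routine-but-delicate part to get right.
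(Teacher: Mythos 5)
Your approach does not match the structure of the set $\mathcal{T}_N^3$ and would fail at the point where you invoke Lemmas~\ref{lemma:aux1}--\ref{lemma:aux3}. For $S\in\mathcal{T}_N^3$ the witnessing pair $(x,f)$ satisfies $(xN)^f\ne xN$, and since $S\notin\mathcal{T}_N^1$ this forces $(xN)^f=x^{-1}N$ with $x^{-1}N\ne xN$, i.e.\ $x^2\notin N$ and $o(xN)>2$. Consequently $N$ does \emph{not} have index $2$ in $\langle N,x\rangle$ (nor in any subgroup containing $x$), and $f$ does not stabilise the coset $xN$ --- it maps it to the distinct coset $x^{-1}N$. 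The hypotheses of Lemmas~\ref{lemma:aux1},~\ref{lemma:aux2} and~\ref{lemma:aux3} (an index-$2$ subgroup and a permutation preserving the nontrivial coset) are therefore never satisfied here; those lemmas are the right tool for $\mathcal{T}_N^4$ (Proposition~\ref{step5}), where $(\gamma N)^f=\gamma N=\gamma^{-1}N$, not for $\mathcal{T}_N^3$. You also noticed yourself that the savings those lemmas provide ($|N|/16$, $|N|/24$, $|N|/32$ on a set of size $2|N|$) fall short of the claimed $|N|/8$; that arithmetic tension is not resolvable within your framework.

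The paper's proof instead works directly on the pair of cosets $xN\cup x^{-1}N$. Writing $x^f=x^{-1}t$, so that $(xn)^f=x^{-1}tn^{\iota_f}$, one composes $f_{|xN}$ with the inversion map $x^{-1}N\to xN$ to obtain a permutation $\beta$ of $xN$ under which $S\cap xN$ must be invariant (inverse-closure then determines $S\cap x^{-1}N$). A fixed point of $\beta$ corresponds to a solution of $n^{-1}(n^{-1})^{\iota_{fx}}=t^{\iota_x}$, so Lemma~\ref{lemma:icecream} bounds the number of fixed points by $3|N|/4$: the exceptional case of that lemma would force $N$ abelian, $t=1$ and $(xn)^f=(xn)^{-1}$ for every $n\in N$, contradicting $S\in\mathcal{T}_N^3$. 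Hence $\beta$ has at most $7|N|/8$ orbits, giving $2^{7|N|/8}$ choices for $S\cap xN$ against the $2^{|N|}=2^{\mathbf{c}(xN\cup x^{-1}N)}$ unconstrained ones, and the factor $|N|\cdot|\Aut(N)|\cdot|R/N|\le 2^{\log_2|R|+(\log_2|N|)^2}$ accounts for the choices of $t$, $\iota_f$ and the coset. This is where the exponent $-|N|/8$ comes from; it is not obtainable by the route you propose.
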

\begin{proof}
Given $S\in\mathcal{T}_N^3$, we let $G_S:=\norm {\Aut(\Cay(R,S))}N$. Given any element $\kappa\in G_S$, we let $\iota_\kappa:N\to N$ denote the automorphism induced by the action of conjugation of $\kappa$ on $N$. Let $x\in R$ and let $f\in (G_S)_1\setminus\cent{(G_S)_1}N$ with $o(xN)>2$ and assume either
\begin{itemize}
\item $N$ is non-abelian, or
\item $N$ is  abelian  and there exists $n\in N$ with $(xn)^f\ne (xn)^{-1}$. 
\end{itemize}
As $S\notin \mathcal{T}_N^1$, we have $(xN)^f\in \{xN,x^{-1}N\}$ and hence $(xN)^f=x^{-1}N$. Thus $x^f=x^{-1}t$, for some $t\in N$. Observe that
\begin{equation}\label{virus1virus1}(xn)^f=x^{nf}=x^{f(f^{-1}nf)}=x^{-1}tn^{\iota_f}.
\end{equation}
From~\eqref{virus1virus1}, we deduce that we have at most $|\Aut(N)||N|\le 2^{(\log_2|N|)^2+\log_2|N|}$ choices for the restriction $f_{|xN}:xN\to x^{-1}N$ of $f$ to $xN$. Let $\beta:xN\to xN$ be the permutation obtained by composing first $f_{|xN}$ and then $\iota:x^{-1}N\to xN$, where $\iota$ is defined by $(x^{-1}n)^\iota=(x^{-1}n)^{-1}=n^{-1}x$ $\forall n\in N$. Thus, from~\eqref{virus1virus1}, we have
$$(x n)^\beta=((xn)^f)^{\iota}=(x^{-1}tn^{\iota_f})^{-1}=(n^{-1})^{\iota_f}t^{-1}x=x(n^{-1})^{\iota_{fx}}(t^{-1})^{\iota_x}.$$
Since $S$ is inverse-closed and $f$-invariant, we deduce that $S\cap xN$ is $\beta$-invariant.

Let $\beta':N\to N$ the permutation defined by $n^{\beta'}=(n^{-1})^{\iota_{fx}}(t^{-1})^{\iota_x}$ $\forall n\in N$. An easy computation reveals that $n\in \mathrm{Fix}_N(\beta')$ if and only if $n^{-1}(n^{-1})^{\iota_{fx}}=t^{\iota_x}$. In particular, we are in the position to apply Lemma~\ref{lemma:icecream} (with $\alpha=\iota_{fx}$ and with the element $t$ there replaced by $t^{\iota_x}$ here ). From Lemma~\ref{lemma:icecream}, we have two possibilities:
\begin{itemize}
\item $|\mathrm{Fix}_{N}(\beta')|\le 3|N|/4$, or
\item $N$ is abelian, $t=1$ and $n^{\iota_{fx}}=n^{-1}$ $\forall n\in N$.
\end{itemize}
If the second possibility holds, then $N$ is abelian, $\iota_f=\iota_{x^{-1}}\iota$ and from~\eqref{virus1virus1} we get $(xn)^f=x^{-1}(n^{\iota_{x^{-1}}})^{-1}=x^{-1}xn^{-1}x^{-1}=(xn)^{-1}$ for every $n \in N$; however, this contradicts the fact that $S\in\mathcal{T}_N^3$. Therefore, $|\mathrm{Fix}_N(\beta')|\le 3|N|/4$.

The definition of $\beta'$ and the previous paragraph yield that $\beta$ has at most
$$\frac{3|N|}{4}+\frac{|N|-\frac{3|N|}{4}}{2}=\frac{7|N|}{8}$$
orbits. Since $S\cap xN$ is $\beta$-invariant, the number of choices for $S\cap xN$ is at most $2^{7|N|/8}$. By taking in account the contributions of $\iota_f$, $xN$ and $t$, we obtain
$$|\mathcal{T}_N^3|\le 2^{(\log_2|N|)^2}|N||R/N|2^{\mathbf{c}(R\setminus (xN\cup x^{-1}N))}2^{\frac{7|N|}{8}}=2^{\mathbf{c}(R)-\frac{|N|}{8}+\log_2|R|+(\log_2|N|)^2}.\qedhere$$
\end{proof}

Our fifth bound is again useful whenever $|N|$ grows with $|R|$.

\begin{proposition}\label{step5}
We have 
 $|\mathcal{T}_N^4|\le 2^{\mathbf{c}(R)-\frac{|N|}{24}+\log_2|R|+2}.$

\end{proposition}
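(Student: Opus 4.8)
The plan is to fix $S\in\mathcal{T}_N^4$ together with a witness $x\in R$ and $f\in\norm{\Aut(\Cay(R,S))}{N}$ with $1^f=1$ and $x^f\notin\{x,x^{-1}\}$, and first extract structural information from the fact that $S\notin\mathcal{S}_N^1\cup\mathcal{T}_N^1\cup\mathcal{T}_N^2\cup\mathcal{T}_N^3$. Since $S\notin\mathcal{T}_N^1$ we have $(xN)^f\in\{xN,x^{-1}N\}$, and if $(xN)^f=x^{-1}N\ne xN$ then $S\notin\mathcal{T}_N^3$ forces $N$ abelian with $(xn)^f=(xn)^{-1}$ for every $n\in N$, so $x^f=x^{-1}$, a contradiction. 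Hence $(xN)^f=xN$, i.e. $x^f=xn_0$ with $n_0\in N\setminus\{1\}$. Writing $\iota_f\in\Aut(N)$ for the automorphism induced by conjugation by $f$, one has $f|_N=\iota_f$ and $(xn)^f=xn_0\,n^{\iota_f}$ for all $n\in N$, and $S\notin\mathcal{T}_N^2$ forces $\iota_f$ to be one of: $\mathrm{id}$ (so $f$ centralises $N$); the inversion $\iota$, with $N$ abelian of exponent $>2$; or $\bar{\iota}_A$, with $N=\Dic(A,y,x')$ generalised dicyclic. In every case $n^{\iota_f}\in\{n,n^{-1}\}$ for all $n\in N$, so in particular $x\notin N$ (otherwise $x^f=x^{\iota_f}\in\{x,x^{-1}\}$).

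Now I would split on whether $x^2\in N$. Suppose first $x^2\notin N$; then $xN\ne x^{-1}N$, the set $xN\cup x^{-1}N$ is inverse-closed with $\mathbf{c}(xN\cup x^{-1}N)=|N|$ (no involutions), and $f$ fixes $xN$ setwise. I would count the cycles of $f|_{xN}\colon xn\mapsto xn_0 n^{\iota_f}$. If $\iota_f=\mathrm{id}$ every cycle has length $o(n_0)\ge2$, giving at most $|N|/2$ cycles. If $\iota_f$ is an involution, then $(f|_{xN})^2$ is the translation $xn\mapsto x(n_0 n_0^{\iota_f})n$: either $n_0 n_0^{\iota_f}\ne1$, whence $f|_{xN}$ again has at most $|N|/2$ cycles, or $n_0 n_0^{\iota_f}=1$ and $f|_{xN}$ is an involution whose fixed set $\{xn : n(n^{\iota_f})^{-1}=n_0\}$ has size at most $3|N|/4$ by Lemma~\ref{lemma:gelato} (for $\iota_f=\iota$ the equation reads $n^2=n_0$, with at most $|\Omega_2(N)|\le|N|/2$ solutions), so $f|_{xN}$ has at most $7|N|/8$ cycles. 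Since $S\cap xN$ is $f|_{xN}$-invariant and $S\cap x^{-1}N=(S\cap xN)^{-1}$, the number of $S$ compatible with a given such $f|_{xN}$ is at most $2^{7|N|/8}\cdot 2^{\mathbf{c}(R)-|N|}=2^{\mathbf{c}(R)-|N|/8}$.

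Suppose instead $x^2\in N$, and put $T=\langle x,N\rangle$, so that $N$ has index $2$ in $T$ and $f$ restricts to a permutation of $T$ with $f|_N=\iota_f$ and $(xn)^f=xn_0 n^{\iota_f}$. According as $\iota_f$ equals $\mathrm{id}$, $\iota$, or $\bar{\iota}_A$, the permutation $f|_T$ has precisely the form treated in Lemma~\ref{lemma:aux1}, Lemma~\ref{lemma:aux2}, or Lemma~\ref{lemma:aux3}, with $x$ in the role of ``$\gamma$'' and $n_0$ in the role of ``$t$'' (in the dicyclic case $f|_N=\bar{\iota}_A$ respects the decomposition $N=A\cup x'A$, as that lemma requires); for Lemma~\ref{lemma:aux2}, whose statement carries an extra hypothesis, I would use $\gamma=x$ when $xN$ has no involution and otherwise re-express $f|_T$ using an involution of $xN$ as ``$\gamma$''. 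Since $n^{f|_N}=n^{\iota_f}\in\{n,n^{-1}\}$ for all $n\in N$, each exceptional conclusion of the relevant lemma would, through its ``moreover'' clause, force $x^f\in\{x,x^{-1}\}$, contradicting our choice of $x$; hence the first alternative of the lemma holds, bounding the number of inverse-closed $f|_T$-invariant subsets of $T$ by $2^{\mathbf{c}(T)-|N|/24}$ (and by $2^{\mathbf{c}(T)-|N|/16}$ in the Lemma~\ref{lemma:aux1} case). Combined with the at most $2^{\mathbf{c}(R\setminus T)}$ choices for $S\setminus T$ and $\mathbf{c}(R)=\mathbf{c}(T)+\mathbf{c}(R\setminus T)$, this gives at most $2^{\mathbf{c}(R)-|N|/24}$ sets $S$ compatible with a given such $f|_T$.

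Finally I would assemble the count. For a fixed coset $xN$ (at most $|R/N|$ of them) and a fixed representative of it, the restriction $f|_{xN}$ (or $f|_T$ when $x^2\in N$) is determined by $\iota_f$, for which there are at most $4$ possibilities ($\mathrm{id}$, $\iota$, and $\bar{\iota}_i,\bar{\iota}_j,\bar{\iota}_k$ in the $Q_8\times C_2^\ell$ case), together with the value $x^f\in xN$ ($|N|$ possibilities); so there are at most $4|R|$ relevant restrictions, each contributing at most $2^{\mathbf{c}(R)-|N|/24}$ admissible $S$ by the two preceding paragraphs. Therefore $|\mathcal{T}_N^4|\le 4|R|\cdot 2^{\mathbf{c}(R)-|N|/24}=2^{\mathbf{c}(R)-|N|/24+\log_2|R|+2}$. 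I expect the main obstacles to be: extracting ``$(xN)^f=xN$'' and the trichotomy for $\iota_f$ cleanly from the definitions of $\mathcal{T}_N^1$, $\mathcal{T}_N^2$ and $\mathcal{T}_N^3$; checking the extra hypothesis of Lemma~\ref{lemma:aux2} by a suitable choice of coset representative; and controlling the number of ``relevant restrictions'' so that the additive error is exactly $\log_2|R|+2$.
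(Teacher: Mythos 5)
Your proposal is correct and follows essentially the same route as the paper's proof: reduce to $(xN)^f=xN$ using the exclusions $\mathcal{T}_N^1$ and $\mathcal{T}_N^3$, extract the trichotomy for $\iota_f$ from the exclusion of $\mathcal{T}_N^2$, apply Lemmas~\ref{lemma:aux1}, \ref{lemma:aux2} and \ref{lemma:aux3} when $x^2\in N$ (with the exceptional conclusions eliminated by their ``moreover'' clauses since $x^f\notin\{x,x^{-1}\}$), and use Lemma~\ref{lemma:gelato} to control fixed points when $xN\ne x^{-1}N$, before summing over at most $4|R|$ possible restrictions. Your two small deviations --- counting cycles of $f_{|xN}$ alone in the case $xN\ne x^{-1}N$ (yielding a saving of $|N|/8$ rather than the paper's $9|N|/16$, still ample) and verifying the extra hypothesis of Lemma~\ref{lemma:aux2} by taking an involution as coset representative inside $xN$ rather than choosing the witness $\gamma$ globally to be an involution --- are both valid and do not change the substance of the argument.
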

\begin{proof}
Given $S\in\mathcal{T}_N^4$, we let $G_S:=\norm {\Aut(\Cay(R,S))}N$. Given any element $\kappa\in G_S$, we let $\iota_\kappa:N\to N$ denote the automorphism induced by the action of conjugation of $\kappa$ on $N$. Let $\gamma\in R$ and let $f\in (G_S)_1$ with  $\gamma^f\notin\{\gamma,\gamma^{-1}\}$.
Furthermore, if possible we will choose $\gamma$ so that $o(\gamma)=2$. Therefore we may assume that if $o(\gamma)\neq 2$, then $(\gamma')^f=\gamma'$ for every $\gamma' \in R$ with $o(\gamma')=2$. (This will be important when we apply Lemma~\ref{lemma:aux2}.)

 We now consider various possibilities depending on the behaviour of $\gamma N$, but first, we state the fact that the set $S$ does not lie in $\mathcal{T}_N^2$ in a manner tailored to our current needs:

\smallskip

\noindent\textsc{Case A} $(G_S)_1=\cent {(G_S)_1}N$, or

\noindent\textsc{Case B} $N$ is  abelian of exponent greater than $2$ and, for every $f\in (G_S)_1\setminus \cent {(G_S)_1}N$ we have $n^f=n^{-1}$ $\forall n\in N$, so $|(G_S)_1:\cent {(G_S)_1}N|=2$, or 

\noindent\textsc{Case C} $N=\Dic(A,y,x)\not\cong Q_8\times C_2^\ell$, for every $f\in (G_S)_1\setminus \cent {(G_S)_1}N$, $A=\cent N f$  and the automorphism $\iota_f$ induced by $f$ on $N$ is $\bar{\iota}_{A}$, or

\noindent\textsc{Case D} $N=Q_8\times C_2^\ell$, $|(G_S)_1:\cent {(G_S)_1}N|\in \{2,4\}$, for every $f\in (G_S)_1\setminus \cent {(G_S)_1}N$,  the automorphism $\iota_f$ induced by $f$ on $N$ is one of $\bar{\iota}_{i}$, $\bar{\iota}_{j}$, $\bar{\iota}_{k}$.

\smallskip

In particular, in cases B, C, and D, $n^{\iota_f}\in \{n,n^{-1}\}$ $\forall n\in N$.

\smallskip

Suppose that $\gamma\in N$. Since $1^f=1$ and since $f$  normalises $N$, we have $\gamma^f=\gamma^{\iota_f}\in \{\gamma,\gamma^{-1}\}$. For the rest of the proof, we may suppose that $\gamma\notin N$. Since $S\notin\mathcal{T}_N^1$, we have $(\gamma N)^f\in \{\gamma N,\gamma^{-1}N\}$. 

Suppose $(\gamma N)^f\ne \gamma N$. Since $S\notin\mathcal{T}_N^3$, we have $(\gamma n)^f=(\gamma n)^{-1}$ $\forall n\in N$ and hence, in particular, $\gamma^f=\gamma^{-1}$. Therefore, for the rest of the proof, we may suppose that $(\gamma N)^f=\gamma N$.

Since $\gamma^f\in \gamma N$, there exists $t\in N$ with $\gamma^f=\gamma t$. Now,
\begin{equation}\label{virus119}
(\gamma n)^f=\gamma^{nf}=\gamma^{f\cdot f^{-1}n f}=(\gamma t)^{n^{\iota_f}}=\gamma tn^{\iota_f},\quad \forall n\in N.
\end{equation}

Suppose now that $\gamma N\neq \gamma^{-1}N$. Then $(\gamma n)^{-1} \in \gamma^{-1}N \neq \gamma N$ for every $n \in N$. Since $(\gamma N)^f =\gamma N$, we cannot have $(\gamma n)^{-1}=(\gamma n)^f$. Thus the orbits of $f$ fuse orbits of the inverse map on $\gamma N \cup \gamma^{-1} N$ unless $f$ has any fixed points on $\gamma N$; that is, unless (using $(\gamma n)^f=\gamma n$ in~\eqref{virus119}) there exists some $n \in N$ with
\begin{equation}\label{fixed-points}
t=n (n^{\iota_f})^{-1}.
\end{equation}
Note that~\eqref{virus119} with $n=1$ together with $\gamma^f \neq \gamma$ implies that $t \neq 1$. So applying Lemma~\ref{lemma:gelato} to $N$ with $n^{\alpha}=n^{\iota_f}$ implies that the number of fixed points of $f$ in $\gamma N$ is at most $3|N|/4$. Therefore the action of $f$ on $\gamma N$ together with the action of the inverse map on $\gamma N \cup \gamma^{-1}N$ results in at least $|N|/4$ orbits of  length at least $4$ and all other orbits having length at least $2$. So when $f_{|\gamma N}$ is given, the number of choices for $S \cap (\gamma N \cup \gamma^{-1} N)$ is at most $2^{(3|N|/4)/2+(|N|/4)/4}=2^{7|N|/16}$. Therefore 
 $$|\mathcal{T}_N^4|\le 3|N||R/N|2^{\mathbf c(R)-\mathbf c(\gamma N \cup \gamma^{-1} N)}2^{7|N|/16} \le 2^{2+\log_2|R|} 2^{\mathbf c(R)-|N|+7|N|/16}=2^{\mathbf c(R)-9|N|/16+\log_2 |R|+2}$$
(where $3|N|$ is the number of choices for the restriction $f_{\gamma N}:\gamma N\to \gamma N$ of $f$ to $\gamma N$, and $|R/N|$ is the number of choices for $\gamma N\in R/N$).

For the remainder of the proof we may assume that $\gamma N=\gamma^{-1} N$, meaning that $N$ is an index-$2$ subgroup of $\langle \gamma, N\rangle$.

Suppose that $f\in \cent {G_S}N$. Then,~\eqref{virus119} becomes $n^f=n$ and $(\gamma n)^f=\gamma tn$, $\forall n\in N$. When $f_{|\gamma N}$ is given, from Lemma~\ref{lemma:aux1}, we deduce that the number of choices for $S\cap \langle \gamma, N\rangle$ is at most $2^{\mathbf{c}(\langle \gamma,N\rangle)-\frac{|N|}{16}}$ (recall that the other cases cannot arise since $\gamma^f \notin \{\gamma, \gamma^{-1}\}$). Therefore
 $$|\mathcal{T}_N^4|\le|N||R/N|2^{\mathbf{c}(R)-\mathbf{c}(\langle\gamma,N\rangle)}2^{\mathbf{c}(\langle\gamma,N\rangle)-\frac{|N|}{16}}\le 2^{\mathbf{c}(R)-\frac{|N|}{16}+\log_2|R|}.$$
	(where $|N|$ is the number of choices for the restriction $f_{\gamma N}:\gamma N\to \gamma N$ of $f$ to $\gamma N$, and $|R/N|$ is the number of choices for $\gamma N\in R/N$).
Therefore, for the rest of the proof we may suppose that $f\notin\cent {G_S}N$. In particular, only Case~B,~C or~D may arise.

Suppose that Case~B holds. Then,~\eqref{virus119} becomes $n^f=n^{-1}$ and $(\gamma n)^f=\gamma tn^{-1}$, $\forall n\in N$, so $n^{\iota_f}=n^{-1}$ for every $n \in N$.  As already observed at the beginning, if $\gamma$ cannot be chosen with $o(\gamma)=2,$ then for every $\gamma n\in \gamma N$ with $o(\gamma n)=2,$ we have $(\gamma n)^f=\gamma n.$ So we may apply Lemma~\ref{lemma:aux2} with $f_{|\langle\gamma,N\rangle}$ taking the role of $\alpha_t$.

When $f_{|\gamma N}$ is given, from Lemma~\ref{lemma:aux2}, we deduce that the number of choices for $S\cap \langle \gamma, N\rangle$ is at most $2^{\mathbf{c}(\langle \gamma,N\rangle)-\frac{|N|}{24}}$ (again, the other cases cannot arise since $\gamma^f \notin \{\gamma, \gamma^{-1}\}$). Therefore

 $$|\mathcal{T}_N^4|\le |N||R/N|2^{\mathbf{c}(R)-\mathbf{c}(\langle\gamma,N\rangle)}2^{\mathbf{c}(\langle\gamma,N\rangle)-\frac{|N|}{24}}\le 2^{\mathbf{c}(R)-\frac{|N|}{24}+\log_2|R|}$$
 (again, $|N|$ is the number of choices for the restriction $f_{\gamma N}:\gamma N\to \gamma N$ of $f$ to $\gamma N$, and $|R/N|$ is the number of choices for $\gamma N\in R/N$).

Cases~C and~D can be dealt with simultaneously. Here,~\eqref{virus119} becomes $n^f=n^{\bar{\iota}_A}$ and $(\gamma n)^f=\gamma tn^{\bar{\iota}_A}$, $\forall n\in N$. When $f_{|\gamma N}$ is given, from Lemma~\ref{lemma:aux3}, we deduce that the number of choices for $S\cap \langle\gamma, N\rangle$ is at most $2^{\mathbf{c}(\langle \gamma,N\rangle)-\frac{|N|}{24}}$ (again, the other cases cannot arise since $\gamma^f \notin \{\gamma, \gamma^{-1}\}$). Therefore

 $$|\mathcal{T}_N^4|\le 3|N||R/N|2^{\mathbf{c}(R)-\mathbf{c}(\langle\gamma,N\rangle)}2^{\mathbf{c}(\langle\gamma,N\rangle)-\frac{|N|}{24}}\le 2^{\mathbf{c}(R)-\frac{|N|}{24}+\log_2|R|+2}$$
(where $3|N|$ is the number of choices for the restriction $f_{\gamma N}:\gamma N\to \gamma N$ of $f$ to $\gamma N$, and $|R/N|$ is the number of choices for $\gamma N\in R/N$).

\end{proof}

Combining these results, we are able to bound $|\mathcal T_N|$.


\begin{proof}[Proof of Theorem~$\ref{main1}$]
Since the initial statement excludes $\mathcal S_N^1$, its proof follows by adding the bounds produced in Propositions~\ref{step2},~\ref{step3},~\ref{step4} and~\ref{step5} for $|\mathcal T_N^i|$, for each $1 \le i \le 4$. If we drop the condition $R=\norm  {\Aut(\Cay(R,S))}R$, then we must also add the bound produced in 
Proposition~\ref{propo:aut} for $\mathcal{S}_N^1$ (which has no effect on the bound we have given). Using Proposition~\ref{propo:aut} requires us to exclude groups that are either abelian of exponent greater than $2$, or generalised dicyclic. 
\end{proof}

\section{Groups with a ``small" normal subgroup}\label{sec:Nsmall}

We begin this section of our paper with a counting result that we will need. The flavour of this result is quite distinct from most of the rest of the paper, and we have placed it in advance of the introduction of the notation and situational information that we will be using for the rest of this section.

\begin{lemma}\label{lemmanew}
Let $X$ be a set and let $f$ and $g$ be permutations of $X$. Then either
\begin{enumerate}
\item\label{enumerate1} $|\{S\subseteq X\mid |S\cap S^f|=|S\cap S^g|\}|\le \frac{3}{4}\cdot 2^{|X|}$, or
\item\label{enumerate2} there exists a subset $I\subseteq X$  such that
\begin{itemize}
\item $I$ is $f$- and $g$-invariant (that is, $I^f=I$ and $I^g=I$),
\item $f_{|I}=g_{|I}$, 
\item $f_{|X\setminus I}=(g^{-1})_{|X\setminus I}$.
\end{itemize}
\end{enumerate}
\end{lemma}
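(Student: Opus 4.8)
The plan is to work with the integer‑valued function $D(S):=|S\cap S^f|-|S\cap S^g|$ on the subsets of the (finite) set $X$, and to recognise it as a polynomial of low degree in the indicator coordinates of $S$. For any permutation $h$ of $X$ one has $|S\cap S^h|=\sum_{x\in X}\mathbf 1_S(x)\mathbf 1_S(x^h)$, so writing $s_x:=\mathbf 1_S(x)$ we obtain
\[
D(S)=\sum_{x\in X}s_xs_{x^f}-\sum_{x\in X}s_xs_{x^g}.
\]
Using $s_x^2=s_x$ on $\{0,1\}$ (which only matters at fixed points of $f$ or $g$), this presents $D$ as a multilinear polynomial of degree at most $2$, whose coefficient of $s_as_b$ (for $a\neq b$) is $\#\{x:\{x,x^f\}=\{a,b\}\}-\#\{x:\{x,x^g\}=\{a,b\}\}$ and whose coefficient of $s_a$ is $\mathbf 1[a^f=a]-\mathbf 1[a^g=a]$.

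The next ingredient is an elementary anticoncentration fact which I would state and prove separately: a nonzero multilinear polynomial $P$ over $\mathbb Q$ in $n$ variables and of degree $d$ satisfies $|\{a\in\{0,1\}^n:P(a)\neq 0\}|\ge 2^{n-d}$. This follows by induction on $n$ (the base case $n=0$ being immediate) after writing $P=x_nQ+R$ with $Q,R$ free of $x_n$ and $\deg Q\le d-1$, distinguishing the cases $Q\equiv 0$, $Q+R\equiv 0$, ($R\equiv 0\neq Q+R$), and ($R\not\equiv 0\not\equiv Q+R$), and in each applying the inductive hypothesis to whichever of $Q$, $R$, $Q+R$ are nonzero. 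Applied to $D$ with $d\le 2$: if $D$ is not the zero polynomial, then $|\{S:D(S)=0\}|\le 2^{|X|}-2^{|X|-2}=\tfrac34\cdot 2^{|X|}$, which is alternative~\eqref{enumerate1}.

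It remains to analyse the case where $D$ is the zero polynomial. By the coefficient formulas above this says exactly that $f$ and $g$ have the same fixed points and that for every unordered pair $\{a,b\}$ the numbers of $x$ with $\{x,x^f\}=\{a,b\}$ and with $\{x,x^g\}=\{a,b\}$ agree; equivalently, the edge‑multisets $\{\{x,x^f\}:x\in X\}$ and $\{\{x,x^g\}:x\in X\}$ coincide. The connected components of this common multigraph are precisely the orbits of $f$, and also precisely the orbits of $g$, so $f$ and $g$ have the same orbits; moreover, restricting the edge‑multiset to a single orbit $C$ gives the same answer whether computed from $f$ or from $g$. On an orbit with $|C|\le 2$ this forces $f|_C=g|_C$; on an orbit with $|C|\ge 3$ both $f|_C$ and $g|_C$ are $|C|$‑cycles with the same set of consecutive pairs, and since such a cycle is recovered, up to inversion, from the set $\{\{z,z^\sigma\}:z\}$, we get $g|_C\in\{f|_C,(f|_C)^{-1}\}$. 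Taking $I$ to be the union of the orbits on which $g|_C=f|_C$ then yields a set that is a union of orbits of both $f$ and $g$, hence invariant under both, with $f|_I=g|_I$, and with $f|_{X\setminus I}=(g^{-1})|_{X\setminus I}$ because $X\setminus I$ is a union of orbits on which $g$ agrees with $f^{-1}$. This is alternative~\eqref{enumerate2}.

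I do not expect a serious obstacle here; the only points requiring care are the bookkeeping around the reduction $s_x^2\mapsto s_x$ (so that the degree and the coefficients of $D$ are computed correctly, the reduction only affecting fixed points) and a clean proof of the combinatorial fact that a cyclic permutation of a finite set of size at least $3$ is determined, up to replacing it by its inverse, by the collection of pairs $\{z,z^\sigma\}$.
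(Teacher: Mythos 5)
Your proof is correct and follows essentially the same route as the paper: both express $|S\cap S^f|-|S\cap S^g|$ as a quadratic form in the indicator variables of $S$, obtain the $\tfrac34\cdot 2^{|X|}$ bound from the fact that such an expression, if not identically zero, is nonzero on at least a quarter of $\{0,1\}^{|X|}$, and in the degenerate case extract the set $I=\{x\in X\mid x^f=x^g\}$. The only cosmetic differences are that you package the counting step as a general anticoncentration lemma for multilinear polynomials (the paper does the degree-one and degree-two cases by hand by freezing all but one or two coordinates), and you identify $I$ via orbits and cycle graphs where the paper reads it off as the set of zero rows of the antisymmetric matrix $F-G$.
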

\begin{proof}
We denote by $F$ and by $G$ the permutation matrices of $f$ and $g$, respectively. Therefore, $F$ and $G$ are $|X|\times |X|$-matrices with $\{0,1\}$ entries, with rows and columns indexed by the set $X$ and such that
\begin{align*}
F_{x,y}=
\begin{cases}
1&\textrm{if }x^f=y,\\
0&\textrm{otherwise},
\end{cases}
\qquad
G_{x,y}=
\begin{cases}
1&\textrm{if }x^g=y,\\
0&\textrm{otherwise}.
\end{cases}
\end{align*}
Let $A:=F-G$. For any $S\subseteq X$, let $\delta_S\in \mathbb{Z}^X$ be the ``indicator'' vector of the set $S$, that is,  
\begin{align*}
(\delta_S)_x:=
\begin{cases}
1&\textrm{if }x\in S,\\
0&\textrm{otherwise}.
\end{cases}
\end{align*}
Finally, let $\langle\cdot,\cdot\rangle:\mathbb{Q}^X\times \mathbb{Q}^X\to\mathbb{Q}$ be the standard scalar product and let $(e_x)_{x\in X}$ be the canonical basis of $\mathbb{Q}^X$.

With the  notation above, for every subset $S$ of $X$, we have $$|S\cap S^f|=\langle \delta_S,F\delta_S\rangle\,\,\textrm{ and }\,\,|S\cap S^g|=\langle \delta_S,G\delta_S\rangle.$$ 
Therefore,
$$\{S\subseteq X\mid |S\cap S^f|=|S\cap S^g|\}=
\{S\subseteq X\mid \langle \delta_S,F\delta_S\rangle=\langle \delta_S,G\delta_S\rangle\}
=\{S\subseteq X\mid \langle \delta_S,A\delta_S\rangle=0\}.$$
For simplicity, we write  $\Delta:\{0,1\}^X\to \mathbb{Q}$ for the mapping defined by $\delta\mapsto\Delta(\delta)=\langle\delta,A\delta\rangle$, for every $\delta\in \{0,1\}^X$.

Suppose first that, there exist $i,j\in X$ with $i\ne j$ and $A_{i,j}+A_{j,i}\ne 0$. Fix $\delta_x\in \{0,1\}$ arbitrarily for every $x\in X\setminus \{i,j\}$, and let $\eta:=\sum_{x\in X\setminus\{i,j\}}\delta_x e_x$. By restricting $\Delta$, we define the function $\Delta': \{0,1\}\times \{0,1\}\to \mathbb{Q}$  by setting
\begin{eqnarray*}
(\delta_i,\delta_j)&\mapsto &\Delta'(\delta_i,\delta_j):=\Delta(\eta+\delta_ie_i+\delta_j e_j)=\langle \eta+\delta_ie_i+\delta_j e_j,A(\eta+\delta_ie_i+\delta_j e_j)\rangle\\
&&=\langle \eta,A\eta\rangle+\delta_i\langle \eta,Ae_i\rangle+\delta_j\langle \eta,Ae_j\rangle+
\delta_i\langle e_i,A\eta\rangle+\delta_j\langle e_j,A\eta\rangle\\
&&+\delta_i^2\langle e_i,Ae_i\rangle
+\delta_j^2\langle e_j,Ae_j\rangle
+\delta_i\delta_j\langle e_i,Ae_j\rangle
+\delta_i\delta_j\langle e_j,Ae_i\rangle.
\end{eqnarray*}
A computation yields 
$$\Delta'(0,0)+\Delta'(1,1)-\Delta'(1,0)-\Delta'(0,1)=A_{i,j}+A_{j,i}\ne 0.$$
In particular, at least one out of the four choices $(\delta_i,\delta_j)\in \{(0,0), (0,1), (1,0), (1,1)\}$ gives rise to a non-zero value for $\Delta(\eta+\delta_ie_i+\delta_je_j)$. Therefore, 
for every choice of  $\delta_x\in \{0,1\}$ with $x\in X\setminus \{i,j\}$, we have at most three more choices for $\delta_i,\delta_j\in\{0,1\}$, for constructing a vector $\delta\in \{0,1\}^X$ with $\Delta(\delta)=0$. Therefore,
$$\{S\subseteq X\mid \langle \delta_S,A\delta_S\rangle=0\}\le 2^{|X|-2}\cdot 3=\frac{3}{4}\cdot 2^{|X|}$$
and~\eqref{enumerate1} holds.

Suppose that, for every $i,j\in X$ with $i\ne j$, we have $A_{i,j}+A_{j,i}=0$. In this case, $$\delta:=\sum_{x\in X}\delta_xe_x\mapsto \Delta(\delta)=\sum_{x\in X}A_{x,x}\delta_x.$$
If $A_{i,i}\ne 0$ for some $i\in X$, then we may use the same argument as in the previous paragraph by fixing $\delta_x\in \{0,1\}$ arbitrarily for every $x\in X\setminus \{i\}$, and by considering the restriction of
$\Delta$ as a function $\Delta'(\delta_i)$ of $\delta_i\in \{0,1\}$ only. In this case, we see that one of the two choices for $\delta_i$ gives rise to  a vector $\delta\in \{0,1\}^X$ with $\Delta(\delta)=0$. Therefore, 
$$\{S\subseteq X\mid \langle \delta_S,A\delta_S=0\rangle\}\le 2^{|X|-1}\cdot 1\le \frac{3}{4}2^{|X|}$$
and~\eqref{enumerate1} holds.

Suppose now that, for every $i,j\in X$ with $i\ne j$, we have $A_{i,j}+A_{j,i}=0$ and $A_{i,i}=0$, that is, $A$ is antisymmetric. Let $I$ be the set of rows of $A=F-G$ that are zero. From the fact that $A$ is antisymmetric and from the definition of $A$, we see that $I$ is $f$- and $g$-invariant, $f_{|I}=g_{|I}$ and $f_{|X\setminus J}=g^{-1}_{|X\setminus J}$. In particular,~\eqref{enumerate2} holds. 
\end{proof}

Incidentally, we observe that, if~\eqref{enumerate2} holds in Lemma~\ref{lemmanew}, then $|S\cap S^f|=|S\cap S^g|$, for every subset $S$ of $X$. We find this quite interesting on its own. For instance, $f:=(1\,2\,3\,4\,5)(6\,7\,8)(9\,10\,11\,12)$ and $g:=(1\,5\,4\,3\,2)(6\,7\,8)(9\,12\,11\,10)$ have the property that $|S\cap S^f|=|S\cap S^g|$, for every subset $S$ of $\{1,\ldots,12\}$.

\medskip

\subsection{Specific notation}\label{notation specific}
Henceforth, let $R$ be a finite group of order $r$ acting regularly on itself via the right regular representation: here, we identify the elements of $R$ as permutation in $\Sym(R)$. Let $N$ denote a non-identity proper normal subgroup of $R$. We let $b:=|R:N|$ and we let $\gamma_1,\ldots,\gamma_b$ be coset representatives of $N$ in $R$. Moreover, we choose $\gamma_1:=1$ to be the identity in $R$. Observe that  $R/N$ defines a group structure on $\{1,\ldots,b\}$ by setting $ij=k$ for every $i,j,k\in \{1,\ldots,b\}$  with $\gamma_i N\gamma_j N=\gamma_k N$. 

Write $v_0:=1$ where $v_0$ has to be understood as a point in the set $R$.
For each $i\in \{1,\ldots,b\}$, set $\mathcal{O}_i:={v_0}^{\gamma_iN}=\gamma_i N=N\gamma_i$. Observe that the $\mathcal{O}_i$s are the orbits of $N$ on $R$, the group $N$ acts regularly on $\mathcal{O}_i$ and $|\mathcal{O}_i|=|N|$.

For an inverse-closed subset $S$ of $R$, we let $\Cay(R,S)$ be the Cayley graph of $R$ with connection set $S$, and we denote by $F_S$ the largest subgroup of $\Aut(\Cay(R,S))$ under which each orbit of $N$ is invariant. In symbols we have
$$F_S:=\{g\in\Aut(\Cay(R,S))\mid \mathcal{O}_i^g=\mathcal{O}_i,\textrm{ for each }i\in \{1,\ldots,b\}\}.$$
(The subscript $S$ in $F_S$ will make some of the later notation cumbersome to use, but it constantly emphasizes that the definition of ``$F$'' depends on $S$.) Similarly, we define
$$B_S:=F_S\cap \norm {\Aut(\Cay(R,S))}N.$$


As above, let $S$ be an inverse-closed subset of $R$. For a vertex $u$ of $\Cay(R,S)$ in $\mathcal{O}_i$, 
\begin{center}
let $\sigma(S,u,j)$ denote  the neighbours of $v_0$ and  $u$ lying in $\mathcal{O}_j$.
\end{center} 
See Figure~\ref{fig:23}. It is clear that
$$\sigma(S,u,j)=S\cap S^{g_u}\cap \mathcal{O}_j=(S\cap \mathcal{O}_j)\cap S^{g_u}=S_j\cap S^{g_u},$$
where $g_u\in R$ with $v_0^{g_u}=u$. Since $u\in\mathcal{O}_i$, we have $u=v_0^{\gamma_ik_u}$, for some $k_u\in N$. In particular, $g_u=\gamma_i k_u$.
Let $s\in S$ with $s^{g_u}\in S_j$. Then $s^{g_u}\in \mathcal{O}_j=v_0^{\gamma_jN}=v_0^{N\gamma_j}$ and $s^{g_u\gamma_j^{-1}}\in v_0^{N}=\mathcal{O}_1$. Since $g_u$ maps the element $v_0$ of $\mathcal{O}_1$ to the element $u$ of $\mathcal{O}_i$, we see that $g_u\in \gamma_iN$ and $s\in \mathcal{O}_1^{\gamma_jg_u^{-1}}=v_0^{N\gamma_j\gamma_i^{-1}}=v_0^{\gamma_j\gamma_i^{-1}N}=\mathcal{O}_{ji^{-1}}$. This shows 
\begin{equation}\label{eq6-----}
\sigma(S,u,j)=S_j\cap S_{ji^{-1}}^{g_u}=S_j\cap S_{ji^{-1}}^{\gamma_i k_u}.
\end{equation}

For two distinct vertices $u,v\in \mathcal{O}_i$ and $j\in \{1,\ldots,b\}$, let $$\Psi(\{u,v\},j):=\{S\subseteq R\mid S=S^{-1} \textrm{ and }|\sigma(S,u,j)|= |\sigma(S,v,j)|\}.$$

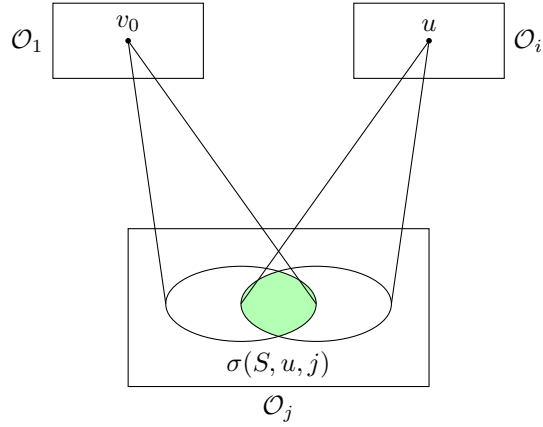
\begin{figure}
\begin{tikzpicture}
\draw (-3,0) rectangle (-1,1);
\draw (1,0) rectangle (3,1);
\draw [fill] (-2,.5) circle [radius=.03];
\draw [fill] (2,.5) circle [radius=.03];
\node [above] at (-2,.5) {$v_0$};
\node [above] at (2,.5) {$u$};
\node [left] at (-3,.5) {$\mathcal{O}_1$};
\node [right] at (3,.5) {$\mathcal{O}_i$};
\node [below] at (0,-4.1) {$\mathcal{O}_j$};
\node [below] at (0,-3.5){$\sigma(S,u,j)$};
\draw (-2,-2) rectangle (2,-4.1);
\begin{scope}
\clip {(-.5,-3) ellipse (1cm and .5cm)};
\fill[green, opacity=0.3] {(.5,-3) ellipse (1cm and .5cm)};
\end{scope}
\draw (-.5,-3) ellipse (1cm and .5cm);
\draw (.5,-3) ellipse (1cm and .5cm);
\draw (-2,.5) -- (-1.5,-3);
\draw (-2,.5) -- (.5,-3);
\draw (2,.5) -- (-.5,-3);
\draw (2,.5) -- (1.5,-3);
\end{tikzpicture}
\caption{The definition of $\sigma(S,u,j)$}\label{fig:23}
\end{figure}

In the results that follow, we use the notation that we have established here. Our aim with the next few results is to show that $|\Psi(\{u,v\},j)|$ is at most $\frac{3}{4}\cdot 2^{\mathbf c(R)}$. This will subsequently be used to bound the number of graphs admitting automorphisms that fix the vertex $1$ and also fix each $\mathcal O_i$ setwise while mapping $u$ to $v$. We generally end up with some other possibilities that we gradually eliminate by introducing additional assumptions.

\begin{proposition}\label{propo:1}Let $i\in \{2,\ldots,b\}$, let $u$ and $v$ be two distinct vertices in $\mathcal{O}_i$ and let $j\in \{1,\ldots,b\}\setminus\{1,i\}$. Then, one of the following holds:
\begin{enumerate}
\item\label{propo:eq1}$|\Psi(\{u,v\},j)|\le \frac{3}{4}\cdot 2^{\mathbf{c}(R)}$,
\item\label{propo:eq2}$j^2=i$, $\gamma_i=\gamma_j^2\bar{y}$ for some $\bar{y}\in N$, $k_u=\bar{y}^{-1}\gamma_j^{-1}\bar{y}k_v\gamma_j$, $k_v=\bar{y}^{-1}\gamma_j^{-1}\bar{y}k_u\gamma_j$ and $\gamma_ik_v,\gamma_ik_u$ centralize $N$, 
\item\label{propo:eq3}$o(ji^{-1})>2$, $o(j)=2$, $o(i)$ is even, $o(\gamma_j)=4$, $\gamma_j^2=k_v^{-1}k_u=k_u^{-1}k_v$, $N$ is abelian  and  $y^{\gamma_j}= y^{-1}$ for every $y\in N$,
\item\label{propo:eq3dash}$o(ji^{-1})=2$, $o(j)>2$, $o(i)$ is even, $o(\gamma_{ji^{-1}})=4$, $\gamma_{ji^{-1}}^2=k_v^{-1}k_u=k_u^{-1}k_v$, $N$ is abelian  and  $y^{\gamma_{ji^{-1}}}= y^{-1}$ for every $y\in N$,
\item\label{propo:eq4}$o(ji^{-1})=o(j)=2$ 
\end{enumerate}
\end{proposition}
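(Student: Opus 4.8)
The plan is to reduce this statement to an application of Lemma~\ref{lemmanew} with a carefully chosen pair of permutations of a suitable subset of $R$, and then to translate the two alternatives of that lemma back into group-theoretic information.

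\textbf{Setting up the permutations.} First I would fix $i\in\{2,\ldots,b\}$, distinct $u,v\in\mathcal O_i$, and $j\in\{1,\ldots,b\}\setminus\{1,i\}$. Writing $g_u=\gamma_ik_u$ and $g_v=\gamma_ik_v$ with $k_u,k_v\in N$, formula~\eqref{eq6-----} gives $\sigma(S,u,j)=S_j\cap S_{ji^{-1}}^{\gamma_ik_u}$ and likewise for $v$. The key observation is that since $j\notin\{1,i\}$, the index $ji^{-1}\notin\{j, i^{-1}\}\ldots$ well, I need to be careful: $ji^{-1}=j$ would force $i=1$, excluded, so $ji^{-1}\neq j$; also $ji^{-1}=1$ would force $i=j$, excluded. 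So $j$ and $ji^{-1}$ are distinct indices, both different from $1$. I would split into the case $ji^{-1}\notin\{j^{-1}\}$ versus $ji^{-1}=j^{-1}$ (equivalently $j^2=i$), since these govern whether the cosets $\mathcal O_j, \mathcal O_{ji^{-1}}$ and their inverses interact. In the generic case, let $X$ be the relevant union of cosets (roughly $\mathcal O_j\cup\mathcal O_{ji^{-1}}$ together with inverses), identify an inverse-closed subset $S$ of $R$ with a pair: its restriction to $X$ and its restriction to $R\setminus X$, and note $|\Psi(\{u,v\},j)|\le 2^{\mathbf c(R\setminus X)}\cdot|\{T\subseteq X : T=T^{-1},\ |T\cap T^{f}|=|T\cap T^{g}|\}|$ where $f,g$ are the permutations of $X$ encoding the ``intersect with a translate'' operations — concretely, on $\mathcal O_j$, $f$ sends $S_j\ni s\mapsto$ the condition that $s^{\gamma_ik_u}\in S_{ji^{-1}}$, so $f$ is really the bijection $\mathcal O_j\to\mathcal O_{ji^{-1}}$ given by right multiplication by $\gamma_ik_u$ composed appropriately, and $g$ the analogous map via $\gamma_ik_v$. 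One must phrase things so that $|S\cap S^f|$ literally computes $|\sigma(S,u,j)|$ for inverse-closed $S$; this is the bookkeeping heart of the argument.

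\textbf{Applying Lemma~\ref{lemmanew} and reading off the cases.} Once $f,g$ are in place on $X$ (after restricting to inverse-closed $S$, I would run Lemma~\ref{lemmanew} on the space of $\iota$-invariant subsets, or more cleanly on an index set parametrising inverse-closed subsets of $X$), alternative~\eqref{enumerate1} of the lemma gives $|\Psi(\{u,v\},j)|\le \frac34\cdot 2^{\mathbf c(R)}$, which is conclusion~\eqref{propo:eq1}. So I may assume alternative~\eqref{enumerate2}: there is an $f$- and $g$-invariant set $I$ with $f_{|I}=g_{|I}$ and $f_{|X\setminus I}=(g^{-1})_{|X\setminus I}$. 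Now $f_{|I}=g_{|I}$ forces, on the $I$-part, an equation of the shape ``conjugation/translation by $\gamma_ik_u$ agrees with that by $\gamma_ik_v$'', while $f_{|X\setminus I}=(g^{-1})_{|X\setminus I}$ forces ``translation by $\gamma_ik_u$ equals the inverse of translation by $\gamma_ik_v$''. Chasing these two relations through the coset structure produces, depending on the relationship between $j$, $ji^{-1}$, and their inverses: (a) if $j^2=i$ the two cosets $\mathcal O_j,\mathcal O_{ji^{-1}}=\mathcal O_{j^{-1}}$ are swapped by inversion, and the relations collapse to $\gamma_i=\gamma_j^2\bar y$, the stated conjugacy relations between $k_u,k_v$, and the centralising condition — this is conclusion~\eqref{propo:eq2}; (b) if $o(ji^{-1})>2=o(j)$ or the symmetric situation, one extracts that $N$ must be abelian with $\gamma_j$ (resp.\ $\gamma_{ji^{-1}}$) inverting $N$, of order $4$, squaring to $k_v^{-1}k_u$ — conclusions~\eqref{propo:eq3} and~\eqref{propo:eq3dash}, with the parity of $o(i)$ coming from $j^2=i$ or $(ji^{-1})^2=i$ in disguise; (c) the remaining degenerate overlap $o(ji^{-1})=o(j)=2$ is conclusion~\eqref{propo:eq4}, where inversion acts trivially enough on the relevant cosets that no contradiction is forced. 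I would organise these as a short case analysis on the orders $o(j), o(ji^{-1})$ and whether $\{j,ji^{-1}\}$ is inversion-closed.

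\textbf{Main obstacle.} The genuinely delicate part is the \emph{exact} identification of the permutations $f$ and $g$ on $X$ so that Lemma~\ref{lemmanew}(2)'s three bullet points translate precisely into the element-wise relations in conclusions~\eqref{propo:eq2}--\eqref{propo:eq4}, including getting the conjugates (e.g.\ $k_u=\bar y^{-1}\gamma_j^{-1}\bar y k_v\gamma_j$) and the centralising conditions on the nose rather than up to some ambiguity. In particular, handling the inversion-closure of the coset pair $\{\mathcal O_j,\mathcal O_{ji^{-1}}\}$ correctly — whether $\mathcal O_j^{-1}=\mathcal O_{ji^{-1}}$, i.e.\ $j^2=i$, versus $\mathcal O_j^{-1}=\mathcal O_{j^{-1}}$ being a third coset — is what forks the argument into the listed cases, and one has to be scrupulous that $X$ is chosen inverse-closed and that the count $2^{\mathbf c(R\setminus X)}\cdot(\text{stuff on }X)$ really reconstitutes $2^{\mathbf c(R)}$ with the factor $\tfrac34$ surviving. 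Everything else is routine coset arithmetic once that dictionary is fixed.
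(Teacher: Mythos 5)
Your plan coincides with the paper's proof only in the case $j^2=i$: there the paper does exactly what you describe, turning the condition into $|S_j\cap S_j^f|=|S_j\cap S_j^g|$ for two permutations $f=k_u^{-1}\gamma_i^{-1}\iota$, $g=k_v^{-1}\gamma_i^{-1}\iota$ of the single coset $\mathcal{O}_j$ (here $S_j$ is a genuinely arbitrary subset, since $o(j)>2$ and inverse-closure only determines $S_{j^{-1}}=S_j^{-1}$), applying Lemma~\ref{lemmanew}, showing $I=\emptyset$, and chasing $f=g^{-1}$ to conclusion~\eqref{propo:eq2}. Your scheme would also succeed in the fully generic case $j^2\ne i$, $o(j)>2$, $o(ji^{-1})>2$: your $f$ and $g$ on $X=\mathcal{O}_j\cup\mathcal{O}_{ji^{-1}}$ are involutions, so alternative~\eqref{enumerate2} of Lemma~\ref{lemmanew} collapses to $f=g$, i.e.\ $k_u=k_v$, a contradiction, leaving only conclusion~\eqref{propo:eq1}.

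The genuine gap is the case $o(j)=2$, $o(ji^{-1})>2$ (and its mirror), which is exactly where conclusions~\eqref{propo:eq3} and~\eqref{propo:eq3dash} come from. Lemma~\ref{lemmanew} bounds the number of \emph{arbitrary} subsets of $X$ satisfying the equality by $\frac34\cdot 2^{|X|}$, but when $o(j)=2$ the trace of an inverse-closed $S$ on $\mathcal{O}_j$ is constrained to be inverse-closed, a subfamily of size $2^{\mathbf{c}(\mathcal{O}_j)}\ll 2^{|N|}$, and a $3/4$ bound on all subsets of $X$ says nothing about the fraction of this subfamily that survives. Nor can you repair this by re-parametrising by $\iota$-orbits: the translate $S_j^{k_u^{-1}\gamma_i^{-1}}$ sends a $2$-element $\iota$-orbit of $\mathcal{O}_j$ to a $2$-element subset of $\mathcal{O}_{ji^{-1}}$ whose intersection with $S_{ji^{-1}}$ is counted pointwise, so the relevant form is no longer a difference of two permutation matrices and Lemma~\ref{lemmanew} does not apply. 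The paper handles this case by a completely different argument: it splits on whether $S_j^{k_u^{-1}}=S_j^{k_v^{-1}}$, bounds the number of such $S_j$ via the orbits of $\langle\iota,\delta\rangle$ on $\gamma_jN$ (where $\delta$ is right translation by $k_v^{-1}k_u$), and for all other $S_j$ shows that at most half of the choices of $S_{ji^{-1}}$ satisfy even the mod-$2$ relaxation of the equality; conclusions~\eqref{propo:eq3} and~\eqref{propo:eq3dash} arise precisely when $\iota=\delta$ on $\gamma_jN$, which forces $\gamma_j^2=k_v^{-1}k_u$ and conjugation by $\gamma_j$ inverting $N$. Finally, the evenness of $o(i)$ there needs its own small group-theoretic argument ($i^2j=ji^{-2}$ has order $2$ while $o(ij)>2$); it is not ``$j^2=i$ in disguise'', since $o(j)=2$ in that case.
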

\begin{proof}
We divide the proof in various cases.

\smallskip

\noindent\textsc{Case  } $j^2= i$.

\smallskip

\noindent Observe that, if $S\subseteq R$ is inverse-closed, then $S_{j^{-1}}=S_j^{-1}$. As $ji^{-1}=j^{-1}$, from~\eqref{eq6-----}, we obtain  
\begin{eqnarray}\label{eq6+++++xx}
|\sigma(S,u,j)|=
|S_{ji^{-1}}\cap S_j^{k_u^{-1}\gamma_i^{-1}}|=|S_{j^{-1}}\cap S_j^{k_u^{-1}\gamma_i^{-1}}|,\\\nonumber
|\sigma(S,v,j)|=|S_{ji^{-1}}\cap S_j^{k_v^{-1}\gamma_i^{-1}}|=|S_{j^{-1}}\cap S_j^{k_v^{-1}\gamma_i^{-1}}|.
\end{eqnarray}
Let $\iota:N\gamma_j^{-1}\to N\gamma_j$ be the mapping defined by $x\mapsto x^{\iota}=x^{-1}$ for every $x\in N\gamma_j^{-1}$ and set 
\begin{center}$f:=k_u^{-1}\gamma_i^{-1}\iota:N\gamma_j\to N\gamma_j$ and $g:=k_v^{-1}\gamma_{i}^{-1}\iota:N\gamma_j\to N\gamma_j$
\end{center} as permutations of $N\gamma_j$. Now,~\eqref{eq6+++++xx} yields
\begin{eqnarray}\label{eq6+++++x}
|\sigma(S,u,j)|=
|S_{j}^\iota\cap S_j^{k_u^{-1}\gamma_i^{-1}}|=|S_{j}\cap S_j^{k_u^{-1}\gamma_i^{-1}\iota}|=|S_{j}\cap S_j^{f}|,\\\nonumber
|\sigma(S,v,j)|=
|S_{j}^\iota\cap S_j^{k_v^{-1}\gamma_i^{-1}}|=|S_{j}\cap S_j^{k_v^{-1}\gamma_i^{-1}\iota}|=|S_{j}\cap S_j^{g}|.
\end{eqnarray}
From~\eqref{eq6+++++x}, we see that we are in the position to apply Lemma~\ref{lemmanew} with $X:=\mathcal{O}_j$. If Lemma~\ref{lemmanew}~\eqref{enumerate1} holds, then 
the number of  subsets $S_j\subseteq\mathcal{O}_j$ satisfying~\eqref{eq6+++++x} is at most $\frac{3}{4}\cdot 2^{|N|}$. Therefore
$$|\Psi(\{u,v\},j)|\le\frac{3}{4}2^{|N|}\cdot 2^{\mathbf{c}(R)-|N|},$$
observe that $2^{\mathbf{c}(R)-|N|}$ counts the number of inverse-closed subsets of $R\setminus (\gamma_jN\cup \gamma_j^{-1}N)$. Thus~\eqref{propo:eq1} is proved in this case.

Therefore, we may suppose that Lemma~\ref{lemmanew}~\eqref{enumerate2} holds. Therefore, there exists an $f$- and $g$-invariant subset $I$ of $N\gamma_j$ such that $f_{|I}=g_{|I}$ and $f_{|N\gamma_j\setminus I}=(g^{-1})_{|N\gamma_j\setminus I}$. If $I\ne\emptyset$, then there exists $x\in I$ and hence
$$x^{k_u^{-1}\gamma_i^{-1}\iota}=x^f=x^g=x^{k_v^{-1}\gamma_i^{-1}\iota}.$$
Simplifying $\iota$ and $\gamma_i^{-1}$, we obtain $xk_u^{-1}=xk_v^{-1}$.
This yields $k_u=k_v$, contradicting the fact that $u\ne v$. Therefore $I=\emptyset$ and hence $f=g^{-1}$.

This means that, for every $x\in N\gamma_j$, we have
\begin{align}\label{thisequation}
x&=x^{fg}=x^{k_u^{-1}\gamma_i^{-1}\iota k_v^{-1}\gamma_i^{-1}\iota}=(xk_u^{-1})^{\gamma_i^{-1}\iota k_v^{-1}\gamma_i^{-1}\iota}
=(xk_u^{-1}\gamma_i^{-1})^{\iota k_v^{-1}\gamma_i^{-1}\iota}=(\gamma_i k_u x^{-1})^{k_v^{-1}\gamma_i^{-1}\iota}\\\nonumber
&=(\gamma_i k_u x^{-1}k_v^{-1})^{\gamma_i^{-1}\iota}=(\gamma_i k_u x^{-1}k_v^{-1}\gamma_i^{-1})^\iota=
\gamma_i k_v x k_u^{-1}\gamma_i^{-1}.
\end{align}
As $j^2=i$, there exists $\bar{y}\in N$ with 
\begin{equation}\label{thisthisequation}\gamma_i=\gamma_j^2\bar{y}.
\end{equation}
When $x=\gamma_j$,~\eqref{thisequation} gives $$\gamma_i^{-1}\gamma_j\gamma_i=k_v \gamma_j k_u^{-1}.$$Using~\eqref{thisthisequation}, we obtain $\gamma_i^{-1}\gamma_j\gamma_i=\bar{y}^{-1}\gamma_j\bar{y}$. Therefore
\begin{equation}\label{thisthisthisequation}
k_u=\bar{y}^{-1}\gamma_j^{-1}\bar{y}k_v\gamma_j.
\end{equation} From~\eqref{thisequation},~\eqref{thisthisequation} and~\eqref{thisthisthisequation}, we obtain
$$x=\gamma_i k_v x\gamma_j^{-1}k_v^{-1}\bar{y}^{-1}\gamma_j^{-1},\qquad\forall x\in N\gamma_j.$$
By writing $x=y\gamma_j$ with $y\in N$, we deduce
$$y=(\gamma_ik_v)y(\gamma_i k_v)^{-1},\qquad \forall y\in N.$$
Since $y$ is an arbitrary element of $N$, we get that $\gamma_i k_v$ centralizes $N$. From this and from~\eqref{thisthisequation} and~\eqref{thisthisthisequation} we see that~\eqref{propo:eq2} holds.~$_\blacksquare$ 

\smallskip

For the rest of the proof, we suppose $j^2\ne i$. From~\eqref{eq6-----}, we obtain  
\begin{equation}\label{eq6+++++}
|\sigma(S,u,j)|=|S_{ji^{-1}}\cap S_j^{k_u^{-1}\gamma_i^{-1}}|\quad\textrm{and}\quad |\sigma(S,v,j)|=|S_{ji^{-1}}\cap S_j^{k_v^{-1}\gamma_i^{-1}}|.
\end{equation}
From~\eqref{eq6+++++}, we see that the condition ``$|\sigma(S,u,j)|=|\sigma(S,v,j)|$" imposes no constraint on $S_x$, for $x\notin \{j,ji^{-1},j^{-1},(ji^{-1})^{-1}\}$. Observe that
$$\{j,j^{-1}\}\ne \{ji^{-1},(ji^{-1})^{-1}\},$$
because we are assuming $j^2\ne i$. As usual, there is one implicit condition on the set $S$: it is inverse-closed. This suggests a natural decomposition of $S$.  Write $R_{j,i}:=\gamma_jN\cup \gamma_j^{-1}N\cup\gamma_{ji^{-1}}N\cup \gamma_{ji^{-1}}^{-1}N$ and $R_{j,i}^c:=R\setminus R_{j,i}$. We have
\begin{equation}\label{eqcases1}
\mathbf{c}(R_{j,i})=
\begin{cases}
2|N|&\textrm{if }o(j)>2 \textrm{ and }o(ji^{-1})>2,\\
|N|+\mathbf{c}(\gamma_jN)&\textrm{if }o(j)=2 \textrm{ and }o(ji^{-1})>2,\\
|N|+\mathbf{c}(\gamma_{ji^{-1}}N)&\textrm{if }o(j)>2 \textrm{ and }o(ji^{-1})=2,\\
\mathbf{c}(\gamma_jN)+\mathbf{c}(\gamma_{ji^{-1}}N)&\textrm{if }o(j)=o(ji^{-1})=2.\\
\end{cases}
\end{equation}
Observe that $R_{j,i}$ and $R_{j,i}^c$ are inverse-closed; moreover, we may write $S:=S_{j,i}\cup S_{j,i}^c$, where $S_{j,i}\subseteq R_{j,i}$ and $S_{j,i}^c\subseteq R_{j,i}^c$. 

Using this decomposition of the inverse-closed subsets, we get 
$$|\Psi(\{u,v\},j)|=A\cdot 2^{B},$$
where $2^B$ is the number of inverse-closed subsets $S_{j,i}^c\subseteq R_{j,i}^c$ and $A$ is the number of inverse-closed subsets $S_{j,i}\subseteq R_{j,i}$ such that 
$|S_{ji^{-1}}\cap S_j^{k_u^{-1}\gamma_i^{-1}}|=|S_{ji^{-1}}\cap S_j^{k_v^{-1}\gamma_i^{-1}}|$ with $S:=S_{j,i}\cup S_{j,i}^c$. 
We deduce
\begin{equation}\label{eqcases}
B=\mathbf{c}(R)-\mathbf{c}(R_{j,i}).
\end{equation}

\smallskip

\noindent\textsc{Case }$o(ji^{-1})>2$.

\smallskip

\noindent When $o(j)>2$, let $t_1$ be the number of subsets $S_j$ of $\mathcal{O}_j$ with $S_j^{k_u^{-1}}=S_j^{k_v^{-1}}$. When $o(j)=2$, let $t_1$ be the number of inverse-closed subsets $S_j$ of $\mathcal{O}_j$ with $S_j^{k_u^{-1}}=S_j^{k_v^{-1}}$. In both cases, let $$t_2=2^{\mathbf{c}(\gamma_jN\cup\gamma_j^{-1}N)}-t_1.$$

Observe that for every subset $S\subseteq R$ with $S_j^{k_u^{-1}}=S_j^{k_v^{-1}}$, we have $S\in \Psi(\{u,v\},j)$ because $S_j^{k_u^{-1}\gamma_i^{-1}}=S_j^{k_v^{-1}\gamma_i^{-1}}$ and hence $|S_{ji^{-1}}\cap S_j^{k_u^{-1}\gamma_i^{-1}}|=|S_{ji^{-1}}\cap S_j^{k_v^{-1}\gamma_i^{-1}}|$. (In other words, when $S_j^{k_u^{-1}}=S_j^{k_v^{-1}}$, we have no constraint on $S_{ji^{-1}}$.) If  $S_j^{k_u^{-1}}=S_j^{k_v^{-1}}$, then $S_j=S_j^{k_v^{-1}k_u}$ and hence $S_j$ is a union of $\langle k_v^{-1}k_u\rangle$-orbits. As $N$ acts regularly on $\mathcal{O}_j$, we have 
\begin{equation}\label{eqx}t_1\leq 2^{\frac{|N|}{o(k_v^{-1}k_u)}}.
\end{equation} 

Next let $S\in \Psi(\{u,v\},j)$  and suppose $S_j$ is a subset of $\mathcal{O}_j$ with $S_j^{k_u^{-1}}\neq S_j^{k_v^{-1}}$. Here to estimate the number of inverse-closed subsets $S$ of $R$ with $|S_{ji^{-1}}\cap S_j^{k_u^{-1}\gamma_i^{-1}}|=|S_{ji^{-1}}\cap S_j^{k_v^{-1}\gamma_i^{-1}}|$, we estimate the number of subsets satisfying the weaker (but easier to handle) condition $$|S_{ji^{-1}}\cap S_j^{k_u^{-1}\gamma_i^{-1}}|\equiv |S_{ji^{-1}}\cap S_j^{k_v^{-1}\gamma_i^{-1}}|\mod 2.$$ Now $S_j^{k_u^{-1}\gamma_i^{-1}}$ and $S_j^{k_v^{-1}\gamma_i^{-1}}$ are two distinct subsets of $\mathcal{O}_{ji^{-1}}$ of the same size $a$, say. Let $b$ be the size of $S_j^{k_u^{-1}\gamma_i^{-1}}\cap S_j^{k_v^{-1}\gamma_i^{-1}}$. Observe that $a-b>0$ because $S_j^{k_u^{-1}}\ne S_j^{k_v^{-1}}$. A subset $S_{ji^{-1}}$ of $\mathcal{O}_{ji^{-1}}$ with $|S_{ji^{-1}}\cap S_j^{k_u^{-1}\gamma_i^{-1}}|\equiv |S_{ji^{-1}}\cap S_j^{k_v^{-1}\gamma_i^{-1}}|\mod 2$ can be written as $X\cup Y$, where $X$ is an arbitrary subset of $\mathcal{O}_{ji^{-1}}\setminus (S_j^{k_v^{-1}\gamma_i^{-1}}\setminus S_j^{k_u^{-1}\gamma_i^{-1}})$ and $Y$ is a subset of $S_j^{k_v^{-1}\gamma_i^{-1}}\setminus S_j^{k_u^{-1}\gamma_i^{-1}}$ of size having parity uniquely determined by the parity of $|X|$. Therefore we have $2^{|N|-(a-b)}2^{(a-b)-1}=2^{|N|-1}$ choices for $S_{ji^{-1}}$. Altogether we have 
\begin{eqnarray*}
A&\le & t_1\cdot 2^{|N|}+t_2\cdot 2^{|N|-1}=t_12^{|N|}+(2^{\mathbf{c}(\gamma_jN\cup \gamma_j^{-1}N)}-t_1)2^{|N|-1}=2^{|N|+\mathbf{c}(\gamma_jN\cup\gamma_j^{-1}N)-1}+t_12^{|N|-1}
\end{eqnarray*}
As $o(ji^{-1})>2$, from~\eqref{eqcases1}, we have $|N|+\mathbf{c}(\gamma_jN\cup\gamma_j^{-1}N)=\mathbf{c}(R_{j,i})$ and hence, from~\eqref{eqx} (noting that if $o(j)>2$ then $\mathbf{c}(\gamma_jN\cup\gamma_j^{-1}N)=|N|$, and otherwise $\gamma_jN\cup\gamma_j^{-1}N=\gamma_jN$), we get
\begin{eqnarray}\label{hopefullylaststar}
A&\le &2^{\mathbf{c}(R_{j,i})-1}+t_12^{|N|-1}\leq 2^{\mathbf{c}(R_{j,i})-1}+2^{|N|+\frac{|N|}{o(k_v^{-1}k_u)}-1}\\\nonumber
&=&2^{\mathbf{c}(R_{j,i})}\left(\frac{1}{2}+\frac{1}{2^{1+\mathbf{c}(R_{j,i})-|N|-\frac{|N|}{o(k_v^{-1}k_u)}}}\right)=
2^{\mathbf{c}(R_{j,i})}\left(\frac{1}{2}+\frac{1}{2^{1+\mathbf{c}(\gamma_jN\cup\gamma_j^{-1}N)-\frac{|N|}{o(k_v^{-1}k_u)}}}\right)
.
\end{eqnarray}
When $\mathbf{c}(\gamma_jN\cup\gamma_j^{-1}N)> |N|/o(k_v^{-1}k_u)$,~\eqref{hopefullylaststar} yields $$A\le 2^{\mathbf{c}(R_{j,i})}\cdot\left(\frac{1}{2}+\frac{1}{2^2}\right)=\frac{3}{4}\cdot 2^{\mathbf{c}(R_{j,i})}$$
and hence~\eqref{propo:eq1} holds in this case. Assume $\mathbf{c}(\gamma_jN\cup\gamma_j^{-1}N)\le |N|/o(k_v^{-1}k_u)$, that is, $$
\frac{|N|}{o(k_v^{-1}k_u)}\ge\begin{cases}
\frac{|N\gamma_j|+|N\gamma_j\cap I(R)|}{2}&\textrm{when }o(j)=2,\\
|N|&\textrm{when }o(j)>2.
\end{cases}$$ As $k_v^{-1}k_u\ne 1$, we have $o(k_v^{-1}k_u)\ge 2$ and hence $o(j)=2$. Thus
$$\frac{|N|}{o(k_v^{-1}k_u)}\ge \frac{|N\gamma_j|+|N\gamma_j\cap I(R)|}{2}.$$
Since the left-hand side is at most $|N|/2$ and since the right-hand side is at least $|N|/2$, this implies $o(k_v^{-1}k_u)= 2$ and 
$$0\ge\frac{|N\gamma_j\cap I(R)|}{2}.$$
Therefore $N\gamma_j\cap I(R)=\emptyset$, $N\gamma_j$ contains no involutions and $\mathbf{c}(\gamma_jN)=|N|/2$.
Under these strong conditions we refine the upper bound in~\eqref{hopefullylaststar} by first improving our upper bound in~\eqref{eqx}. 

As $o(j)=2$, $N\gamma_j$ is inverse-closed. Recall that $t_1$ is the number of inverse-closed subsets $S_j\subseteq N\gamma_j$ with $S_j^{k_v^{-1}k_u}=S_j$. Consider the permutation $\iota:\gamma_j N\to \gamma_j N$ defined by mapping $$\gamma_jy\mapsto (\gamma_j y)^{-1}=y^{-1}\gamma_j^{-1},$$ for each $y\in N$, and consider the permutation $\delta:\gamma_j N\to \gamma_j N$ defined by mapping $$\gamma_jy\mapsto \gamma_j y k_v^{-1}k_u,$$ for each $y\in N$. Observe that $\iota$ and $\delta$ are involutions with no fixed points: 
$\iota$ has no fixed points because $\gamma_j N$ contains no involutions and $\delta$ is an involution because $o(k_v^{-1}k_u)=2$. In this new setting, $$t_1=2^o,$$ where $o$ is the number of orbits of $\langle \iota,\delta\rangle\le\mathrm{Sym}(\gamma_jN)$. Each orbit of $\langle \iota,\delta\rangle$ has even length, because $\iota$ has order $2$ and has no fixed points. Suppose $\langle\iota,\delta\rangle$ has at least one orbit of length greater then $2$. Then $o\le |N|/2-1$ (the upper bound is achieved when $\langle\iota,\delta\rangle$ has $|N|/2-2$ orbits of length $2$ and one of length $4$). Thus, in this case, $$t_1\le 2^{\frac{|N|}{2}-1}.$$ Using this slight improvement on $x$ and $\mathbf{c}(\gamma_j N)=|N|/2$, we obtain 
\begin{eqnarray*}
A&\le & t_1\cdot 2^{|N|}+t_2\cdot 2^{|N|-1}=t_12^{|N|}+(2^{\frac{|N|}{2}}-t_1)2^{|N|-1}=2^{\frac{3|N|}{2}-1}+t_12^{|N|-1}\\
&\leq&2^{\frac{3|N|}{2}-1}+2^{\frac{3|N|}{2}-2}=\frac{3}{4}\cdot 2^{\frac{3|N|}{2}}.
\end{eqnarray*}
As $\mathbf{c}(R_{j,i})=|N|+\mathbf{c}(\gamma_jN)=3|N|/2$ (see~\eqref{eqcases1}), we obtain
\begin{eqnarray}\label{hopefullylaststar1}
A&\leq& \frac{3}{4}\cdot 2^{\mathbf{c}(R_{j,i})}.
\end{eqnarray}
In particular, from~\eqref{eqcases} and~\eqref{hopefullylaststar1}, we see that~\eqref{propo:eq1} holds. 

It remains to suppose that each orbit of $\langle \iota,\delta\rangle$ has length $2$; this means  $\iota=\delta$, that is, $$(\gamma_j y)^\iota=(\gamma_j y)^\delta,\quad \forall y\in N.$$ In other words, $y^{-1}\gamma_j^{-1}=\gamma_j y k_v^{-1}k_u$, for every $y\in N$. Set $z:=k_v^{-1}k_u$. Applying this equality with $y=1$, we get $\gamma_j^{-1}=\gamma_j z$ and hence $\gamma_j^2=z$ because $z$ has order $2$. Thus we have $y^{-1}\gamma_j^{-1}=\gamma_j y\gamma_j^{-2}$ and hence $\gamma_j y\gamma_j^{-1}=y^{-1}$. This shows that the element $\gamma_j$ acts by conjugation on $N$ inverting each of its elements. Therefore, $N$ is abelian. 

To complete this case, we need to show that $o(i)$ is even. Observe that since $o(j)=2$ we have $j=(i^{-1})(ij)=((i^{-1})(ij))^{-1}=(ij)^{-1}i$. Therefore, $i^2j=(i)(ij)=(ij)^{-1}i^{-1}=ji^{-2}$ has order 2. Since $o(ij)=o(ji^{-1})>2$, we cannot have $i \in \langle i^2 \rangle$, so $o(i)$ must be even.  In particular,~\eqref{propo:eq3} holds.~$_\blacksquare$

\smallskip

\noindent\textsc{Case }$o(ji^{-1})=2$ and $o(j)>2$.

\smallskip

\noindent This case can be reduced to the case above. Set $u':=v_0^{g_u^{-1}}$ and observe that $g_u^{-1}=k_u^{-1}\gamma_i^{-1}$ and hence $u'\in \mathcal{O}_{i^{-1}}$. From~\eqref{eq6-----}, we have $$|\sigma(S,u,j)|=|S_j\cap S_{ji^{-1}}^{g_u}|=|S_j^{g_u^{-1}}\cap S_{ji^{-1}}|=|S_{ji^{-1}}\cap S_j^{g_u^{-1}}|=|\sigma(S,u',ji^{-1})|.$$
Similarly, $|\sigma(S,v,j)|=|\sigma(S,v',ji^{-1})|$, where $v':=v_0^{g_v^{-1}}$. In particular, $|\sigma(S,u,j)|=|\sigma(S,v,j)|$ if and only if $|\sigma(S,u',ji^{-1})|=|\sigma(S,v',ji^{-1})|$. Thus $|\Psi(\{u,v\},j)|=|\Psi(\{u',v'\},ji^{-1})|$. As $o(j)>2$ and $o(ji^{-1})=2$, this case follows by applying the previous case to $\Psi(\{u',v'\},ji^{-1})$. We obtain that either~\eqref{propo:eq1} or~\eqref{propo:eq3dash} holds. 

\smallskip

\noindent\textsc{Case }$o(ji^{-1})=o(j)=2$. This is the only remaining option.

\end{proof}



For three distinct vertices $u,v,w\in \mathcal{O}_i$ and $j\in \{1,\ldots,b\}$, let $$\Psi(\{u,v,w\},j):=\{S\subseteq R\mid S=S^{-1} \textrm{ and }|\sigma(S,u,j)|= |\sigma(S,v,j)|=|\sigma(S,w,j)|\}.$$

\begin{proposition}\label{ioddprop}
Let $i\in \{2,\ldots,b\}$, let $u, v$, and possibly $w$ be distinct vertices in $\mathcal{O}_i$ and let $j\in \{1,\ldots,b\}\setminus\{1,i\}$. Then unless $o(j)=o(ji^{-1})=2$, we can conclude that:
\begin{itemize}
\item if $o(i)$ is odd, then $|\Psi(\{u,v\},j)|\le \frac{3}{4}\cdot 2^{\mathbf{c}(R)}$ or $j^2=i$; and
\item if $w$ exists, then $|\Psi(\{u,v,w\},j)|\le \frac{3}{4}\cdot 2^{\mathbf{c}(R)}$.
\end{itemize}
\end{proposition}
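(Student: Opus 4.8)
The plan is to derive both bullets directly from Proposition~\ref{propo:1}, using only the elementary inclusion $\Psi(\{u,v,w\},j)\subseteq \Psi(\{u,v\},j)\cap\Psi(\{u,w\},j)$, so that it is enough to show that one of the two pairs $\{u,v\}$, $\{u,w\}$ falls into alternative~\eqref{propo:eq1} of Proposition~\ref{propo:1}. All of the vertex pairs considered satisfy the hypotheses of Proposition~\ref{propo:1} with the same $i$ and $j$, so the proposition applies to each of them.

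First I would prove the statement about $|\Psi(\{u,v\},j)|$ under the assumption that $o(i)$ is odd. Apply Proposition~\ref{propo:1} to the pair $\{u,v\}$. By hypothesis we are not in the case $o(j)=o(ji^{-1})=2$, so alternative~\eqref{propo:eq4} does not occur; and alternatives~\eqref{propo:eq3} and~\eqref{propo:eq3dash} both assert that $o(i)$ is even, so they are excluded by the parity of $o(i)$. Hence either~\eqref{propo:eq1} holds, giving $|\Psi(\{u,v\},j)|\le \tfrac34\cdot 2^{\mathbf{c}(R)}$, or~\eqref{propo:eq2} holds, giving $j^2=i$. This is precisely the first bullet.

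Next I would handle the case where $w$ exists. Apply Proposition~\ref{propo:1} to each of the pairs $\{u,v\}$ and $\{u,w\}$. If alternative~\eqref{propo:eq1} holds for either pair, then by the inclusion above $|\Psi(\{u,v,w\},j)|\le \tfrac34\cdot 2^{\mathbf{c}(R)}$ and we are done. So assume neither pair satisfies~\eqref{propo:eq1}; since $o(j)=o(ji^{-1})=2$ is excluded, alternative~\eqref{propo:eq4} is also unavailable, and therefore each of the two pairs satisfies one of~\eqref{propo:eq2},~\eqref{propo:eq3},~\eqref{propo:eq3dash}. The key observation is that these three alternatives are pairwise incompatible, and which of them can occur is determined by $i$ and $j$ alone: \eqref{propo:eq2} forces $j^2=i$, hence (as $i\ne 1$) both $o(j)\ge 3$ and $o(ji^{-1})=o(j^{-1})\ge 3$; \eqref{propo:eq3} forces $o(j)=2$ and $o(ji^{-1})>2$; and \eqref{propo:eq3dash} forces $o(j)>2$ and $o(ji^{-1})=2$. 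Since the quantities $o(j)$, $o(ji^{-1})$ and the truth value of ``$j^2=i$'' do not depend on the chosen pair of vertices, both pairs lie in the same alternative. In each of the three alternatives the stated relation pins down the ``second'' coset parameter in terms of $k_u$ and data depending only on $i$ and $j$: in~\eqref{propo:eq2} one has $k_u=\bar{y}^{-1}\gamma_j^{-1}\bar{y}\,k_v\gamma_j$ with $\bar{y}=\gamma_j^{-2}\gamma_i$ (the same $\bar{y}$ in both applications); in~\eqref{propo:eq3} one has $k_u^{-1}k_v=\gamma_j^{2}$; and in~\eqref{propo:eq3dash} one has $k_u^{-1}k_v=\gamma_{ji^{-1}}^{2}$. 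Applying the corresponding relation to the pair $\{u,w\}$ yields the identical identity with $k_w$ in place of $k_v$; comparing the two gives $k_v=k_w$, hence $v=w$, contradicting the distinctness of $u,v,w$. Thus this ``bad'' situation cannot arise, so one of $\{u,v\}$, $\{u,w\}$ satisfies~\eqref{propo:eq1}, which proves the second bullet.

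The only real work is the bookkeeping in the third paragraph: checking that alternatives~\eqref{propo:eq2}--\eqref{propo:eq3dash} of Proposition~\ref{propo:1} are mutually exclusive and depend only on $i,j$, and that the auxiliary elements $\bar{y}=\gamma_j^{-2}\gamma_i$ and $\gamma_{ji^{-1}}$ are the same in both invocations of the proposition. There is no new group-theoretic or combinatorial input beyond Proposition~\ref{propo:1}.
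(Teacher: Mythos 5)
Your proof is correct and follows essentially the same route as the paper's: both deduce the result from Proposition~\ref{propo:1} by excluding alternatives~\eqref{propo:eq3}, \eqref{propo:eq3dash} and~\eqref{propo:eq4} when $o(i)$ is odd, and, for the triple, by showing that if no pair falls into alternative~\eqref{propo:eq1} then the surviving alternatives force two of $k_u,k_v,k_w$ to coincide, contradicting distinctness. The only cosmetic differences are that you work with the two pairs $\{u,v\}$ and $\{u,w\}$ where the paper uses all three pairs, and that you make the mutual exclusivity of alternatives~\eqref{propo:eq2}--\eqref{propo:eq3dash} (as a function of $i,j$ alone) slightly more explicit.
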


\begin{proof}
Assume that we do not have $o(j)=o(ji^{-1})=2$.

We apply Proposition~\ref{propo:1} to $\{u,v\}$. If $o(i)$ is odd, we see immediately that Proposition~\ref{propo:1} parts~\eqref{propo:eq3}, \eqref{propo:eq3dash}, and~\eqref{propo:eq4} cannot arise. Parts~\eqref{propo:eq1} and~\eqref{propo:eq2} are the conclusions we desire.

We also apply Proposition~\ref{propo:1} for the pairs $\{v,w\}$ and $\{w,u\}$. If Proposition~\ref{propo:1} part~\eqref{propo:eq1} holds for one (or more) of the three pairs, then the result immediately follows. Therefore, we suppose that none of the pairs $\{v,w\}$, $\{v,u\}$ and $\{w,u\}$ satisfies Proposition~\ref{propo:1} part~\eqref{propo:eq1}.

Assume that there exists a pair satisfying Proposition~\ref{propo:1} part~\eqref{propo:eq2}. Then $j^2=i$. It follows that $o(j)>2$ and $o(ji^{-1})>2$. In particular, each pair satisfies Proposition~\ref{propo:1} part~\eqref{propo:eq2}. However, by applying Proposition~\ref{propo:1} part~\eqref{propo:eq2} to the pairs $\{u,v\}$ and $\{w,v\}$, we get
$$k_u=\bar{y}^{-1}\gamma_j^{-1}\bar{y}k_v\gamma_j=k_w,$$
contradicting the fact that $u\ne w$. Therefore, none of the pairs $\{v,w\}$, $\{v,u\}$ and $\{w,u\}$ satisfies Proposition~\ref{propo:1} part~\eqref{propo:eq2}.

Now, it is readily seen that, if one of the pairs satisfies Proposition~\ref{propo:1} part~\eqref{propo:eq3} (respectively, part~\eqref{propo:eq3dash}), then all pairs satisfy Proposition~\ref{propo:1} part~\eqref{propo:eq3} (respectively, part~\eqref{propo:eq3dash}). In particular, we deduce
$$k_v^{-1}k_w=\gamma_j^2=k_v^{-1}k_u,$$
contradicting the fact that $u\ne w$. (The argument when the pairs satisfy Proposition~\ref{propo:1} part~\eqref{propo:eq3dash} is similar.)
\end{proof}

For two distinct vertices $u,v\in \mathcal{O}_i$, let $$\Psi(\{u,v\}):=\bigcap_{j\in \{1,\ldots,b\}\setminus\{1,i\}}\Psi(\{u,v\},j).$$

Similarly, for three distinct vertices $u,v,w\in \mathcal{O}_i$ and $j\in \{1,\ldots,b\}\setminus\{1,i\}$, let $$\Psi(\{u,v,w\}):=\bigcap_{j\in \{1,\ldots,b\}\setminus\{1,i\}}\Psi(\{u,v,w\},j).$$

Our next result further refines these possibilities.

\begin{proposition}\label{ioddboundprop}
Let $i\in \{2,\ldots,b\}$, and let $u, v,$ and possibly $w$ be distinct vertices in $\mathcal{O}_i$. 
\begin{itemize}
\item If $o(i)$ is odd, then $|\Psi(\{u,v\})|\le 2^{\mathbf{c}(R)-0.02\cdot\frac{|R|}{|N|}}$.
\item If $w$ exists and $R/N$ is not an elementary abelian $2$-group, then $|\Psi(\{u,v,w\})|\le 2^{\mathbf{c}(R)-0.02\cdot\frac{|R|}{|N|}}$.
\end{itemize}
\end{proposition}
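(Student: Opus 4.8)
The plan is to sum the bounds coming from Proposition~\ref{propo:1} (respectively Proposition~\ref{ioddprop}) over all the relevant indices $j$, using that for most values of $j$ we land in conclusion~\eqref{propo:eq1} of Proposition~\ref{propo:1}, which contributes a factor $\tfrac34$. First I would observe that $\Psi(\{u,v\})=\bigcap_j \Psi(\{u,v\},j)$, so the number of $j$'s for which we can guarantee the ``good'' bound $|\Psi(\{u,v\},j)|\le \tfrac34\cdot 2^{\mathbf c(R)}$ matters multiplicatively: if there are $m$ pairwise ``independent'' such indices then $|\Psi(\{u,v\})|\le (\tfrac34)^m\,2^{\mathbf c(R)}$. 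The subtlety is that distinct $j$'s impose constraints on overlapping sets of cosets $\gamma_j N\cup\gamma_j^{-1}N\cup\gamma_{ji^{-1}}N\cup\gamma_{ji^{-1}}^{-1}N$, so to multiply the savings cleanly I would restrict attention to a subset $J$ of indices for which these coset-blocks $R_{j,i}$ are pairwise disjoint (or at least with controlled overlap); then on each block independently the $\tfrac34$ factor applies, and the product telescopes.

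Next I would bound $|J|$ from below. Excluding $j\in\{1,i\}$, and excluding the (at most two) problematic values $j$ with $o(j)=o(ji^{-1})=2$, and $j=j_0$ with $j_0^2=i$ when $o(i)$ is odd (Proposition~\ref{ioddprop}), almost all of $R/N$ survives. Each block $R_{j,i}$ meets at most four cosets of $N$; grouping the $b=|R:N|$ cosets into blocks this way yields at least roughly $b/8$ independent indices — more carefully, after throwing away the $O(1)$ exceptional cosets and pairing $j$ with $j^{-1}$, $ji^{-1}$, $(ji^{-1})^{-1}$, one gets $|J|\ge (b-c)/8$ for a small constant $c$. For the three-point case, Proposition~\ref{ioddprop} tells us that whenever $o(j)=o(ji^{-1})=2$ fails we already have the good bound for $\Psi(\{u,v,w\},j)$, with no $j^2=i$ exception; the set of $j$ with $o(j)=o(ji^{-1})=2$ is exactly the set where $jN$ and $ji^{-1}N$ are both involutions in $R/N$, and if $R/N$ is not elementary abelian of exponent $2$ this set cannot be all of $R/N\setminus\{1,i\}$, so again a positive proportion of indices is good. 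Combining, $|\Psi|\le (\tfrac34)^{|J|}2^{\mathbf c(R)}\le 2^{\mathbf c(R)-|J|\log_2(4/3)}$, and since $|J|\ge (b-c)/8$ with $b=|R|/|N|$, the exponent is at most $\mathbf c(R)-\tfrac{\log_2(4/3)}{8}\cdot\tfrac{|R|}{|N|}+O(1)$; as $\tfrac{\log_2(4/3)}{8}\approx 0.0519$, absorbing the additive constant (valid once $|R|/|N|$ is not tiny, and handling the finitely many small quotients directly) gives the claimed $0.02\cdot\frac{|R|}{|N|}$ with room to spare.

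The main obstacle I anticipate is making the ``independent blocks'' argument rigorous: the sets $R_{j,i}$ for different $j$ genuinely can share cosets, and the constraint defining $\Psi(\{u,v\},j)$ is on pairs of cosets $(\gamma_jN,\gamma_{ji^{-1}}N)$, so one has to choose $J$ so that the bipartite-type constraint graph on cosets decomposes into components each of which is handled by a single $j$, and then verify that on a component carrying exactly one active constraint the count of inverse-closed subsets is genuinely at most $\tfrac34$ times the unconstrained count — this is precisely what Proposition~\ref{propo:1}\eqref{propo:eq1} gives, but one must be careful that the ``free'' cosets $R_{j,i}^c$ in that proposition are exactly the cosets outside the chosen component. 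A clean way to organize this is to pick $J$ greedily: repeatedly select a still-allowed index $j$, apply Proposition~\ref{propo:1} to get a factor $\tfrac34$ on the block $R_{j,i}$, then delete from consideration every index $j'$ whose block $R_{j',i}$ meets $R_{j,i}$ (at most a bounded number, since $R_{j,i}$ has $\le 4$ cosets and each coset lies in $R_{j',i}$ for $O(1)$ values of $j'$); this yields $|J|=\Omega(b)$ with an explicit constant and makes the product bound immediate. The second, minor obstacle is the bookkeeping of additive constants and the finitely many small cases where $|R|/|N|$ is too small for the stated inequality to follow from the block count alone; these can be dispatched by noting that for $|R|/|N|$ bounded the right-hand side exponent is $\mathbf c(R)-O(1)$ and a direct argument (or simply the trivial bound $|\Psi|\le 2^{\mathbf c(R)}$ together with at least one good index $j$) suffices.
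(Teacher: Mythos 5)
Your overall strategy---extract a family of indices $j$ of size linear in $b=|R:N|$ whose blocks $R_{j,i}$ are pairwise disjoint, so that the $\tfrac{3}{4}$ savings from Proposition~\ref{ioddprop} multiply across disjoint parts of $S$---is essentially the paper's own (the paper implements the greedy/disjointness step by taking an independent set in an auxiliary graph of maximum valency $4$ on the pairs $\{j,j^{-1}\}$, via Caro--Tur\'an--Wei). The genuine gap is in how you certify that a \emph{positive proportion} of indices actually land in the good case of Proposition~\ref{ioddprop}, i.e.\ avoid $o(j)=o(ji^{-1})=2$. In the first bullet you assert there are ``at most two'' such problematic $j$, and in the second bullet you argue only that the bad set ``cannot be all of $R/N\setminus\{1,i\}$''. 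Both are inadequate: the set $\{j\mid o(j)=o(ji^{-1})=2\}$ can have size linear in $b$ (for example, if $R/N$ is dihedral and $i$ is a nontrivial rotation of odd order, then every reflection $j$ has $o(j)=2$ and $ji^{-1}$ is again a reflection), and ``not all'' yields only a single good index, hence a single factor of $\tfrac{3}{4}$, which does not give $2^{-0.02|R|/|N|}$ once $|R|/|N|$ is large. The missing ingredient is a quantitative lower bound on the number of non-involutions: since $R/N$ is not an elementary abelian $2$-group (automatic when $o(i)$ is odd), Miller's theorem gives $|R/N\setminus {\bf I}(R/N)|\ge |R/N|/4$, and restricting to $j$ with $o(j)>2$ excludes the bad case outright while retaining at least $b/4$ candidates before the disjointness thinning. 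This is exactly how the paper reaches $\ell\ge b/20$ and the constant $0.02<\log_2(4/3)/20$; without such a bound your claim ``$|J|\ge (b-c)/8$ for a small constant $c$'' is unsupported.

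A secondary issue: in the $o(i)$ odd case you propose to discard the $j$ with $j^2=i$. An element of a group can have many square roots, so this exclusion could again delete a non-negligible set of indices. The paper avoids any loss by replacing such a $j$ with $j^{-1}$ (the same vertex of its auxiliary graph), noting that $(j^{-1})^2=i^{-1}\ne i$ because $o(i)$ is odd. Your block-disjointness bookkeeping and the final arithmetic absorbing the $O(1)$ losses are otherwise sound and would go through once the Miller-type count is inserted.
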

\begin{proof}
If $o(i)$ is odd, then $R/N$ is not an elementary abelian $2$-group, so we may assume this throughout the proof. 

We define an auxiliary  graph $X$: the vertex-set of $X$ is $\{\{j,j^{-1}\}\mid j\in R/N\}$ and the vertex $\{j,j^{-1}\}$ is declared to be adjacent to $$\{ji^{-1},ij^{-1}\},\,\{ij,j^{-1}i^{-1}\},\, \{j^{-1}i,i^{-1}j\}\, \textrm{ and }\,\{ji,i^{-1}j^{-1}\}.$$ In particular, $X$ is a graph with $\mathbf{c}(R/N)$ vertices and where each vertex has valency at most $4$. Observe that some vertex $\{j,j^{-1}\}$ might have valency less than four, because the elements $\{ji^{-1},ij^{-1}\}$, $\{ij,j^{-1}i^{-1}\}$, $\{j^{-1}i,i^{-1}j\}$ and $\{ji,i^{-1}j^{-1}\}$ are not necessarily distinct. Moreover, some vertex $\{j,j^{-1}\}$ might have a loop: indeed, it is easy to check that $\{j,j^{-1}\}$ has a loop if and only if $j^2\in \{i,i^{-1}\}$. 

Let $Y$ be the subgraph induced by $X$ on $R/N\setminus I(R/N)$. Since $R/N$ is not an elementary abelian $2$-group, by a result of Miller~\cite{Miller}, we get $|R\setminus I(R/N)|\ge |R/N|/4$. Now, a classical graph theoretic result of Caro-Tur\'{a}n-Wei~\cite{11, 40, 41} yields that $Y$ has an independent set, $\mathcal{I}$ say, of
cardinality at least 
$$
\sum_{
\substack{\{j,j^{-1}\}\\ o(j)>2}}\frac{1}{\mathrm{deg}_X(\{j,j^{-1}\})+1}\ge \frac{|R/N|/4}{5}=\frac{|R|}{20|N|}.$$

Thus $\mathcal{I}=\{\{j_1,j_1^{-1}\},\ldots,\{j_\ell,j_\ell^{-1}\}\}$, for some $\ell\ge |R|/20|N|$.
The independence of $\mathcal{I}$ yields that, for every two distinct vertices $\{j_u,j_u^{-1}\}$ and $\{j_v,j_v^{-1}\}$ in $\mathcal{I}$, the neighbourhood of $\{j_u,j_u^{-1}\}$ and $\{j_v,j_v^{-1}\}$ are disjoint. 
Therefore,~\eqref{eq6-----}  yields that the events $\Psi(\{u,v\},j)$ and $\Psi(\{u,v\},j')$ are independent, and likewise (if $w$ exists) that the events $\Psi(\{u,v,w\},j)$ and $\Psi(\{u,v,w\},j')$ are independent. 

Furthermore, if $o(i)$ is odd and one of these $\ell$ vertices corresponds to the unique $j$ with $j^2=i$ then the same vertex corresponds to $j^{-1}$, and $(j^{-1})^2 =i^{-1} \neq i$ since $o(i)$ is odd, so we may choose the event $\Psi(\{u,v\},j^{-1})$ instead of $\Psi(\{u,v\},j)$, avoiding the possibility that part~\eqref{propo:eq2} of Proposition~\ref{propo:1} arises. 

Thus, it follows  from Proposition~\ref{ioddprop} for either $\Psi=\Psi(\{u,v\})$ or $\Psi=\Psi(\{u,v,w\})$ as appropriate, that $$\Psi \le \left(\frac{3}{4}\right)^{\ell}\cdot 2^{\mathbf{c}(R)}\le \left(\frac{3}{4}\right)^{\frac{|R|}{20|N|}}\cdot 2^{\mathbf{c}(R)}=2^{\mathbf{c}(R)-\log_2(4/3)(\frac{|R|}{20|N|})}<2^{\mathbf{c}(R)-0.02\cdot \frac{|R|}{|N|}}.\qedhere$$
\end{proof}

We now use the bounds we have achieved, to show that the number of graphs admitting automorphisms that fix every orbit $\mathcal O_k$ setwise, but act nontrivially on some $\mathcal O_i$ is a vanishingly small fraction of the $2^{\mathbf c(R)}$ Cayley graphs on $R$, as long as either $o(i)$ is odd, or the orbit on $\mathcal O_i$ has length at least $3$. Actually, these formulas only produce results that are vanishingly small if $|N|$ is small enough relative to $|R|$ that $|R|/|N|$ grows with $|R|$, so this is the point at which it starts to become clear that we need to be assuming that $|N|$ is relatively small, in order to apply the results in this section. The result involving an orbit of length $3$ does not work in the case that $R/N$ is an elementary abelian $2$-group; this case will need to be handled separately.

\begin{lemma}\label{iodd-lemma}
Let 
\begin{eqnarray*}\mathcal{S}:=\{S\subseteq R&\mid& S=S^{-1}, \textrm{ there exists }i\in \{2,\ldots,b\} \textrm{ with $o(i)$ odd such that }\\
&&(F_S)_{v_0} \textrm{ has a nontrivial orbit on } \mathcal{O}_i\}.\end{eqnarray*}
Furthermore, if $R/N$ is not elementary abelian $2$-group, let
\begin{eqnarray*}
\mathcal{S}':=\{S\subseteq R&\mid& S=S^{-1}, \textrm{ there exists }i\in \{2,\ldots,b\} \textrm{ such that }\\
&&(F_S)_{v_0} \textrm{ has an orbit of cardinality at least }3\textrm{ on } \mathcal{O}_i\}.
\end{eqnarray*}
Then $|\mathcal{S}|\le 2^{\mathbf{c}(R)-0.02\frac{|R|}{|N|}+\log_2(|R||N|/2)}$ and $|\mathcal{S}'|\le 2^{\mathbf{c}(R)-0.02 \frac{|R|}{|N|}+\log_2(|R||N|^2/6)}$.
\end{lemma}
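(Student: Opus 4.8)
The plan is to bound $|\mathcal{S}|$ and $|\mathcal{S}'|$ by a union-bound over the possible choices of the orbit $\mathcal{O}_i$ and of two (respectively three) distinct points in it, and then to invoke Proposition~\ref{ioddboundprop} for each such choice. First I would fix $S\in\mathcal{S}$. By definition there exists $i\in\{2,\ldots,b\}$ with $o(i)$ odd such that $(F_S)_{v_0}$ has a nontrivial orbit on $\mathcal{O}_i$; pick any element $f$ of $(F_S)_{v_0}$ moving a point of $\mathcal{O}_i$, and pick $u\in\mathcal{O}_i$ with $v:=u^f\ne u$. Since $f$ is an automorphism of $\Cay(R,S)$ fixing $v_0$ and fixing each $\mathcal{O}_k$ setwise, for every $j$ the set $\sigma(S,u,j)$ of common neighbours of $v_0$ and $u$ lying in $\mathcal{O}_j$ is mapped by $f$ bijectively to $\sigma(S,v,j)$, so $|\sigma(S,u,j)|=|\sigma(S,v,j)|$ for all $j\in\{1,\ldots,b\}$, and in particular for all $j\in\{1,\ldots,b\}\setminus\{1,i\}$. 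Hence $S\in\Psi(\{u,v\})$. Here I should note (and this is the reason the hypothesis $o(i)$ odd is used rather than ``$o(j)=o(ji^{-1})=2$ fails'') that when $o(i)$ is odd we cannot have $o(j)=o(ji^{-1})=2$ for all $j$: indeed $j\mapsto ji^{-1}$ is a bijection of $R/N$, and if every $j$ had order dividing $2$ then $R/N$ would be an elementary abelian $2$-group, forcing $o(i)\mid 2$, contradicting $o(i)$ odd and $i\ne 1$. So the hypothesis of Proposition~\ref{ioddboundprop} is met, and $|\Psi(\{u,v\})|\le 2^{\mathbf{c}(R)-0.02\cdot|R|/|N|}$.

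Therefore $\mathcal{S}\subseteq\bigcup_{i,u,v}\Psi(\{u,v\})$, where the union is over $i\in\{2,\ldots,b\}$ with $o(i)$ odd and over unordered pairs $\{u,v\}$ of distinct points of $\mathcal{O}_i$. The number of such triples $(i,\{u,v\})$ is at most $b\cdot\binom{|N|}{2}\le b\cdot|N|^2/2\le |R|\cdot|N|/2$ (using $b=|R:N|$ so $b|N|=|R|$, hence $b|N|^2/2=|R||N|/2$). Consequently
$$|\mathcal{S}|\le \frac{|R||N|}{2}\cdot 2^{\mathbf{c}(R)-0.02\frac{|R|}{|N|}}=2^{\mathbf{c}(R)-0.02\frac{|R|}{|N|}+\log_2(|R||N|/2)},$$
which is the first claimed bound. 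For $\mathcal{S}'$ the argument is the same except that $(F_S)_{v_0}$ now has an orbit on some $\mathcal{O}_i$ of size at least $3$, so we may choose three distinct points $u,v,w$ in a single orbit (take $f$ moving a point and set $v=u^f$, $w=u^{f^2}$, noting these are distinct when the orbit has length $\ge 3$; if $f^2$ happens to fix $u$, replace $f$ by a suitable power or use a different generator realising the length-$\ge 3$ orbit — any three distinct points in one $(F_S)_{v_0}$-orbit on $\mathcal{O}_i$ will do, and such a triple exists by the pigeonhole/orbit structure). The same common-neighbour counting gives $S\in\Psi(\{u,v,w\})$, and since $R/N$ is assumed not to be an elementary abelian $2$-group (so again $o(j)=o(ji^{-1})=2$ cannot hold for every $j$), Proposition~\ref{ioddboundprop} gives $|\Psi(\{u,v,w\})|\le 2^{\mathbf{c}(R)-0.02|R|/|N|}$. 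The number of triples $(i,\{u,v,w\})$ is at most $b\binom{|N|}{3}\le b|N|^3/6=|R||N|^2/6$, yielding
$$|\mathcal{S}'|\le \frac{|R||N|^2}{6}\cdot 2^{\mathbf{c}(R)-0.02\frac{|R|}{|N|}}=2^{\mathbf{c}(R)-0.02\frac{|R|}{|N|}+\log_2(|R||N|^2/6)}.$$

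The routine pieces here are the union bound and the counting of triples; the genuine content all sits in Proposition~\ref{ioddboundprop} (and behind it Propositions~\ref{propo:1} and~\ref{ioddprop}), which we are entitled to assume. The one subtlety I would be careful about — and which I expect to be the only real obstacle in writing this up cleanly — is verifying in each case that the hypothesis ``unless $o(j)=o(ji^{-1})=2$'' of Proposition~\ref{ioddprop}, equivalently the hypotheses of Proposition~\ref{ioddboundprop}, are actually available: for $\mathcal{S}$ this is guaranteed by $o(i)$ being odd (which forces $R/N$ not to be an elementary abelian $2$-group and in fact makes part~\eqref{propo:eq2} of Proposition~\ref{propo:1} avoidable by passing from $j$ to $j^{-1}$, exactly as in the proof of Proposition~\ref{ioddboundprop}), and for $\mathcal{S}'$ it is guaranteed by the standing assumption that $R/N$ is not elementary abelian of exponent $2$. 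Once that is in place, the displayed inequalities follow immediately, completing the proof.
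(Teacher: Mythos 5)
Your proposal is correct and follows essentially the same route as the paper: a union bound over the choice of $i$ and of a pair (resp.\ triple) of distinct points in a single $(F_S)_{v_0}$-orbit on $\mathcal{O}_i$, the observation that such an orbit forces $S\in\Psi(\{u,v\})$ (resp.\ $\Psi(\{u,v,w\})$) via $\sigma(S,u,j)^f=\sigma(S,u^f,j)$, and then Proposition~\ref{ioddboundprop} together with the count $b\binom{|N|}{2}\le |R||N|/2$ (resp.\ $b\binom{|N|}{3}\le |R||N|^2/6$). The side discussion about re-verifying the ``$o(j)=o(ji^{-1})=2$'' condition is unnecessary since Proposition~\ref{ioddboundprop} already packages those hypothesis checks, but it does no harm.
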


\begin{proof}
For each $i\in \{2,\ldots,b\}$ with $o(i)$ odd, let $\mathcal{S}_i$ be the subset of $\mathcal{S}$ defined by
$$\mathcal{S}_i:=\{S\subseteq R\mid S=S^{-1}, (F_S)_{v_0}\textrm{ has a nontrivial orbit on } \mathcal{O}_i\}.$$ If $o(i)$ is even then define $\mathcal{S}_i =\emptyset$.
Clearly, $\mathcal{S}=\bigcup_{i=2}^b\mathcal{S}_i$. 

Similarly, for each $i\in \{2,\ldots,b\}$, let $\mathcal{S}'_i$ be the subset of $\mathcal{S}'$ defined by
$$\mathcal{S}'_i:=\{S\subseteq R\mid S=S^{-1}, (F_S)_{v_0}\textrm{ has an orbit of cardinality at least 3 on } \mathcal{O}_i\}.$$
Clearly, $\mathcal{S}'=\bigcup_{i=2}^b\mathcal{S}'_i$.

Let $i\in \{2,\ldots,b\}$, let $S\in \mathcal{S}_{i}$ with $o(i)$ odd, or $S \in \mathcal{S}_i'$ (as appropriate) and let $u, v,$ and possibly $w$ be distinct vertices of $\mathcal{O}_i$ in the same $(F_S)_{v_0}$-orbit. In particular, there exists $f\in (F_S)_{v_0}$ with $u=v^{f}$, and if $w$ exists then there exists $f' \in (F_S)_{v_0}$ with $u^{f'}=w$. Since $f$ (and $f'$ if it exists) is an automorphism of $\Cay(R,S)$ fixing each $N$-orbit setwise, we deduce 
\begin{eqnarray*}
\sigma(S,v,j)^f&=&\sigma(S,v^f,j)=\sigma(S,u,j), \text{ and if $w$ exists then}\\
\sigma(S,v,j)^{f'}&=&\sigma(S,v^{f'},j)=\sigma(S,w,j),
\end{eqnarray*}
for every $j\in \{1,\ldots,b\}\setminus\{1,i\}$. Hence, $|\sigma(S,u,j)|=|\sigma(S,v,j)|(=|\sigma(S,w,j)|)$ and $S\in \Psi(\{u,v\},j)$ or $\Psi(\{u,v,w\},j)$. Since this holds for each $j\in \{1,\ldots,b\}\setminus\{1,i\}$, we get $S\in \Psi(\{u,v\})$ or $S \in \Psi(\{u,v,w\})$. 

The argument in the previous paragraph shows that
$$\mathcal{S}_i\subseteq \bigcup_{\substack{\{u,v\}\subseteq \mathcal{O}_i\\u \neq v}}\Psi(\{u,v\}) \text{ or } \mathcal{S}_i\subseteq \bigcup_{\substack{\{u,v,w\}\subseteq \mathcal{O}_i\\|\{u,v,w\}|=3}}\Psi(\{u,v,w\}).$$

From Proposition~\ref{ioddboundprop}, we deduce that
$$|\mathcal{S}|\le (b-1){|N|\choose 2}2^{\mathbf{c}(R)-0.02\cdot \frac{|R|}{|N|}}\le \frac{|R|}{|N|}\frac{|N|^2}{2}2^{\mathbf{c}(R)-0.02\cdot \frac{|R|}{|N|}}$$
and 
$$|\mathcal{S}'|\le (b-1){|N|\choose 3}2^{\mathbf{c}(R)-0.02\cdot \frac{|R|}{|N|}}\le \frac{|R|}{|N|}\frac{|N|^3}{6}2^{\mathbf{c}(R)-0.02\cdot \frac{|R|}{|N|}}.\qedhere$$
\end{proof}

Our next result deals specifically with the case that $R/N$ is an elementary abelian $2$-group. (We refer to Section~\ref{notation specific} for the definition of $B_S$.)

\begin{lemma}\label{case3} (Recall the notation in Section~$\ref{notation specific}$.) Suppose $R$ is not an abelian group of exponent greater than $2$, that
	$R$ is not a generalized dicyclic group and that $R/N$ is an elementary abelian $2$-group. Then
\begin{eqnarray*}|\{S\subseteq R\mid S=S^{-1}, (B_S)_{v_0}\ne 1\}|\le 2^{\mathbf{c}(R)-\frac{|R|}{192}+(\log_2|R|)^2+2}.
\end{eqnarray*}
\end{lemma}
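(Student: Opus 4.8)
Write $\mathcal F$ for the set in the statement. For $S\in\mathcal F$ fix a nontrivial $f\in(B_S)_{v_0}$ and let $\iota_f\in\Aut(N)$ be the automorphism of $N$ induced by conjugation by $f$. Since $f$ fixes $v_0$, normalises $N$ and fixes every $N$-orbit setwise, it is completely determined by $\iota_f$ together with elements $t_2,\dots,t_b\in N$ via $n^f=n^{\iota_f}$ for $n\in N$ and $(\gamma_i n)^f=\gamma_i t_i n^{\iota_f}$. The point of the hypothesis is that $R/N$ is an elementary abelian $2$-group, so $\mathcal O_i=\mathcal O_i^{-1}$ for every $i$; hence each $\mathcal O_i$ is simultaneously inverse-closed and $f$-invariant, and the two constraints on $S$ — namely $S=S^{-1}$ and $S^f=S$ (the latter being a consequence of $f\in\Aut(\Cay(R,S))_{v_0}$) — decouple across the slices $S_i:=S\cap\mathcal O_i$. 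Consequently the number of $S$ admitting a prescribed $f\in(B_S)_{v_0}$ is at most $\prod_{i=1}^b 2^{o_i(f)}$, where $o_i(f)$ is the number of orbits of $\langle\iota_{|\mathcal O_i},f_{|\mathcal O_i}\rangle$ on $\mathcal O_i$; note $o_i(f)\le\mathbf c(\mathcal O_i)$ and $\sum_i\mathbf c(\mathcal O_i)=\mathbf c(R)$.

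I would first establish a propagation principle: $(B_S)_{v_0}$ cannot act trivially on ``most'' orbits. If $\iota_f\ne 1$ then $f$ acts nontrivially on every $\mathcal O_i$, since $n\mapsto t_i n^{\iota_f}$ is the identity map only when $\iota_f=1$. If $\iota_f=1$, then whenever $f_{|\mathcal O_k}=\mathrm{id}$ the conjugate $f^{\gamma_k}$ again lies in $(B_S)_{v_0}$ (it fixes $v_0$ because $f$ fixes $\mathcal O_k$ pointwise) and acts nontrivially exactly on the orbits $\mathcal O_m$ with $mk$ in the support of $f$; as $k$ ranges over indices outside that support these sets sweep out all of $\{2,\dots,b\}$, using that $i^2=1$ in $R/N$ — unless the support of $f$ is a union of cosets of a nontrivial subgroup $H\le R/N$, in which case $(B_S)_{v_0}$ fixes pointwise the union of $N$-orbits forming the normal subgroup $N'$ corresponding to $H$, and we may replace $N$ by such an $N'$ (still with $R/N'$ elementary abelian $2$). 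So we may assume that for every $i\ge2$ there is $g_i\in(B_S)_{v_0}$ with $g_{i|\mathcal O_i}\ne\mathrm{id}$.

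The next step turns this into a per-orbit saving. If some $g\in(B_S)_{v_0}$ induces an automorphism $\iota_g$ of $N$ that is not of ``reflection type'' — the identity; inversion when $N$ is abelian of exponent $>2$; or $\bar\iota_A$ (one of $\bar\iota_i,\bar\iota_j,\bar\iota_k$ when $N\cong Q_8\times C_2^\ell$) when $N$ is generalised dicyclic over $A$ — then Theorem~\ref{l:aut} applied to $N$ bounds the number of $\iota_g$-invariant inverse-closed subsets of $N$ by $2^{\mathbf c(N)-|N|/96}$, which already controls $S\cap N$; one then unions over the at most $2^{(\log_2|N|)^2}$ possibilities for $\iota_g$. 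So assume all the $\iota_{g_i}$ are of reflection type. For $i\ge2$ set $T_i:=\langle\gamma_i,N\rangle=N\cup\mathcal O_i$, choosing $\gamma_i$ to be an involution whenever $\mathcal O_i$ contains one so that the side hypothesis of Lemma~\ref{lemma:aux2} is satisfied. Then $g_{i|T_i}$ is exactly a permutation of the type treated in Lemma~\ref{lemma:aux1} (if $\iota_{g_i}=1$), Lemma~\ref{lemma:aux2} (if $N$ is abelian and $\iota_{g_i}$ is inversion), or Lemma~\ref{lemma:aux3} (if $\iota_{g_i}=\bar\iota_A$, using also that $\iota_{g_i}=\bar\iota_A$ forces $g_i$ to fix each $A$-coset of $N$ setwise), and since $g_{i|\mathcal O_i}\ne\mathrm{id}$ the ``$t=1$'' exceptional branch of these lemmas is excluded. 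Hence either $|\{X\subseteq T_i:X=X^{-1},X^{g_i}=X\}|\le 2^{\mathbf c(T_i)-|N|/c}$ for an absolute constant $c$ (one may take $c=32$) — which, by decoupling and because a reflection-type $\iota_{g_i}$ leaves at least $2^{\mathbf c(N)-c''|N|}$ of the inverse-closed subsets of $N$ invariant, yields $o_i(g_i)\le\mathbf c(\mathcal O_i)-|N|/c'$ for an absolute $c'$ — or one of the exceptional conclusions holds, pinning down $o(t_i)$, the value $\gamma_i^2$ and the conjugation action of $\gamma_i$ on $A$, and forcing $T_i$ into the short list $C_4\times C_2^\ell$, $Q_8\times C_2^\ell$, $\Dic(N,\gamma_i^2,\gamma_i)$, or the order-$16$ group of Lemma~\ref{lemma:aux2}\eqref{lemma:aux25}.

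The hard part, as I see it, is controlling the exceptional orbits. I would show that at most one index $i$ can be exceptional once $S\notin\mathcal S_N^1$: if the exceptional conclusion held for two distinct $i\ne i'$, then, using that $\gamma_i\gamma_{i'}$ again represents an $N$-coset (once more because $R/N$ is elementary abelian $2$), one can combine the pinned-down data on $\gamma_i$ and $\gamma_{i'}$ to force $\langle\gamma_i,\gamma_{i'},A\rangle$, and ultimately $R$ itself, to be abelian of exponent $>2$ or generalised dicyclic, contrary to hypothesis; alternatively the exceptional data forces the corresponding automorphism to fix or invert every element of $R$, whence $R<\norm{\Aut(\Cay(R,S))}R$ by Lemma~\ref{yetanother} (equivalently Corollary~\ref{cor:new}), i.e.\ $S\in\mathcal S_N^1$, a set already bounded by Proposition~\ref{propo:aut} by $2^{\mathbf c(R)-|R|/96+(\log_2|R|)^2}$, which lies below the claimed bound. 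Outside $\mathcal S_N^1$ we may therefore discard the (at most one) exceptional orbit and sum the savings $|N|/c'$ over the remaining $\ge b-2$ orbits; since $(b-1)|N|=|R|-|N|$ and $b\ge4$ whenever $b>2$, a short computation gives a total saving of at least $|R|/192$, the tiny case $b=2$ (where $T_2=R$) being dealt with directly from Lemmas~\ref{lemma:aux1}--\ref{lemma:aux3} together with the exclusion of abelian-exponent-$>2$ and generalised dicyclic groups. Absorbing the factor $2^{(\log_2|N|)^2}$ for the choice of $\iota_g$ and the polynomial-in-$|R|$ factors for the choice of the exceptional index and of $t$ there into the additive error $(\log_2|R|)^2+2$ then yields the stated bound.
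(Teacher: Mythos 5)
Your proposal takes a per-orbit counting route that is structurally different from the paper's proof, and it has two gaps that I believe are fatal. The first is quantitative: you bound $|\{S:\exists f\}|$ by summing over the data $(\iota_f,t_2,\dots,t_b)$ determining $f$, and the sum over the translation parts costs roughly $(b-1)\log_2|N|$ bits, i.e.\ $\log_2|N|$ per orbit, against a claimed saving of $|N|/c'$ per non-exceptional orbit. Nothing in the hypotheses prevents $|N|$ from being tiny --- e.g.\ $N=\Z{R}$ of order $2$ with $R/N$ elementary abelian --- and then $|N|/c'<\log_2|N|$, so the union bound over the $t_i$ swamps the savings and the argument returns nothing. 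This is not an accident: the target exponent $|R|/192$ has no $|N|$ in the denominator, whereas any honest per-$N$-orbit scheme of the kind you describe can only produce savings of order $(|R|/|N|)\cdot|N|/c'$ \emph{minus} a union-bound cost of order $(|R|/|N|)\log_2|N|$, which is negative for small $|N|$. The second gap is the claim that at most one index $i$ can be exceptional. The exceptional conclusions of Lemmas~\ref{lemma:aux1}--\ref{lemma:aux3} constrain only the small subgroup $T_i=\langle\gamma_i,N\rangle$; having two (or many) exceptional $T_i$ forces neither $R$ itself to be abelian of exponent $>2$ or generalised dicyclic, nor the automorphism to fix or invert every element of $R$ (that conclusion needs \emph{every} orbit to be exceptional before Lemma~\ref{yetanother} applies). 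The intermediate regime --- a constant fraction of orbits exceptional, the rest not --- is exactly where your argument gives too little saving and no structural contradiction, and you offer no proof that it cannot occur. The ``propagation principle'' with its replacement of $N$ by a larger $N'$ has a similar defect: after replacement the index $b'$ may drop to $2$ and the orbit count degenerates.

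For contrast, the paper's proof extracts the $|R|/192$ saving from a \emph{large} subgroup rather than from the $N$-cosets. By the Frattini argument $RB_S=R(B_S)_{v_0}$; after peeling off the sets with $R<\norm{RB_S}{R}$ via Proposition~\ref{propo:aut}, one picks $G_S\le RB_S$ with $R$ maximal, lets $K\ge N$ be the core of $R$ in $G_S$, and observes that $G_S/K$ is primitive with elementary abelian $2$-group point stabiliser $R/K$; by \cite[Lemma~2.1]{MSV} this forces $|G_S:R|=|(G_S)_{v_0}|$ to be an odd prime and $|R:K|=2$. Then either $(G_S)_{v_0}$ acts on $K$ by a nontrivial automorphism of \emph{odd} order --- so Theorem~\ref{l:aut} applies to $K$ with none of its exceptional branches, giving $2^{\mathbf{c}(K)-|K|/96}$ with $|K|=|R|/2$, whence $|R|/192$ --- or it centralises $K$ and acts semiregularly on $R\setminus K$, giving at most $|N|\cdot 2^{|R|/6}$ choices there. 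The oddness of $|(G_S)_{v_0}|$, which is the engine of the whole proof and is where the elementary-abelian-$2$ hypothesis is really used, does not appear anywhere in your argument.
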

\begin{proof}
Let $\mathcal{S}:=\{S\subseteq R\mid S=S^{-1}, (B_S)_{v_0}\ne 1\}$.	Observe that the definition of $B_S$ immediately yields $B_S\unlhd \Aut(\Cay(R,S))$. In particular, $RB_S$ is a group of automorphisms of $\Cay(R,S)$ acting transitively on the vertex set $R$ and normalizing $N$. Since $R$ is also transitive on the vertex set, the Frattini argument gives $RB_S=R(B_S)_{v_0}$.
	
	Let $$\mathcal{S}':=\{S\in \mathcal{S}\mid R<\norm{RB_S}R\}\quad \textrm{and}\quad \mathcal{S}'':=\mathcal{S}\setminus\mathcal{S}'.$$
	
	Since $R$ is not an abelian group of exponent greater than $2$ and since $R$ is not a generalized dicyclic group, Proposition~\ref{propo:aut} yields
	$$|\{S\subseteq R\mid S=S^{-1}, R<\norm {\Aut(\Cay(R,S))}R\}|\le 2^{\mathbf{c}(R)-\frac{|R|}{96}+(\log_2|R|)^2}.$$
	In particular, $|\mathcal{S}'|\le 2^{\mathbf{c}(R)-\frac{|R|}{96}+(\log_2|R|)^2}$.

	For each $S\in\mathcal{S}''$, choose $G_S$ a subgroup of $RB_S$ with $R<G_S$ and with $R$ maximal in $G_S$. Observe that $\norm {RB_S/N}{R/N}=R/N$, because $\norm {RB_S}R=R$.


	Let $K$ be the core of $R$ in $G_S$. Then
	$$K=\bigcap_{g\in G_S}R^g\ge \bigcap_{g\in G_S}N^g=N.$$
	Since $R$ is maximal in $G_S$, $G_S/K$ acts primitively and faithfully on the set of right cosets of $R$ in $G_S$. The stabilizer of a point in this action is $R/K$. As $N\le K$, we deduce that $R/K$ is an elementary abelian $2$-group. From~\cite[Lemma~$2.1$]{MSV}, we deduce $|G_S:R|=|(G_S)_{v_0}|$ is a prime odd number and $|R:K|=2$.
	
	We now partition the set $\mathcal{S}'$ further. We define
	\begin{align*}
	\mathcal{C}&:=\{S\in\mathcal{S}''\mid (G_S)_{v_0} \textrm{ does not act trivially by conjugation on }K\},\\
	\mathcal{C}'&:=\mathcal{S}''\setminus\mathcal{C}=\{S\in\mathcal{S''}\mid (G_S)_{v_0}\le \cent {G_S}K\}.
	\end{align*}
	In what follows, we obtain an upper bound on the cardinality of $\mathcal{C}$ and $\mathcal{C}'$.
	
	For each $S\in\mathcal{C}$, let $\pi_S:(G_S)_{v_0}\to \Aut(K)$ the natural homomorphism given by the conjugation action of $(G_S)_{v_0}$ on $K$. 
	For each $\varphi\in \Aut(K)\setminus\{id_K\}$, let $\mathcal{C}_\varphi:=\{S\in\mathcal{C}\mid \varphi\in \pi_S((G_S)_{v_0})\}$. In other words, $\mathcal{C}_\varphi$ consists of the connection sets $S$ such that $(G_S)_{v_0}$ contains an element acting by conjugation on $K$ as the automorphism $\varphi$. With this new setting, 
	$$\mathcal{C}\subseteq \bigcup_{\varphi\in \Aut(K)\setminus\{id_K\}}\mathcal{C}_\varphi.$$
	Since $|(G_S)_{v_0}|$ is odd, then $\varphi\in \pi_{S}((G_S)_{v_0})$ has odd order. Using this and applying Theorem~\ref{l:aut} to the group $K$, we deduce that  
	$$|\{S\cap K\mid S\in \mathcal{C}_\varphi\}|\le 2^{\mathbf{c}(K)-\frac{|K|}{96}},$$
	for every $\varphi\in\Aut(K)\setminus\{id_K\}$. In particular, as $|K|=|R|/2$, we have
	$$|\mathcal{C}_\varphi|\le 2^{\mathbf{c}(K)-\frac{|K|}{96}}\cdot 2^{\mathbf{c}(R\setminus K)}=2^{\frac{|K|+|I(K)|}{2}-\frac{|R|}{192}+\frac{|R\setminus K|+|I(R\setminus K)|}{2}}\le 2^{\frac{|R|+|I(R)|}{2}-\frac{|R|}{192}}=2^{\mathbf{c}(R)-\frac{|R|}{192}}.$$
	Since $|\Aut(K)|\le 2^{(\log_2|K|)^2}$, we deduce
	$$|\mathcal{C}|\le 2^{\mathbf{c}(R)-\frac{|R|}{192}+(\log_2|R|)^2}.$$
	
	Let $S\in\mathcal{C}'$ and let $\eta_{S}$ be a generator of $(G_S)_{v_0}$: recall that $(G_S)_{v_0}$ is a cyclic group of order $p_S$, where $p_S$ is an odd prime number. Suppose that $\eta_{S}$ fixes some vertex $x\in R\setminus K$. Then 
	$x^{\eta_{S}}=x$, that is, $v_0^{x\eta_{S}}=v_0^{x}$. This yields $x\eta_{S} x^{-1}\in (G_S)_{v_0}$ and $x\in \norm {G_S}{(G_S)_{v_0}}$. Since $(G_S)_{v_0}$ centralizes $K$, we get $\langle K,x,(G_S)_{v_0}\rangle\le \norm{G_S}{(G_S)_{v_0}}$. As $G_S=\langle K,x,(G_S)_{v_0}\rangle$, we deduce $(G_S)_{v_0}\unlhd G_S$, which is a contradiction because $(G_S)_{v_0}$ is core-free in $G_S$. Therefore, $\eta_{S}$ fixes no vertex in $R\setminus K$. Fix $x\in R\setminus K$. Then $x^{\eta_{S}}=xk$, for some $k\in K\setminus\{1\}$. Observe that, for each $k'\in K$, the image of $xk'$ under $\eta_{S}$ is uniquely determined because
	$$(xk')^{\eta_{S}}=x^{k'\eta_{S}}=x^{\eta_{S} k'}=(x^{\eta_{S}})^{k'}=(xk)^k=xkk'.$$
	Applying this equality with $k'=k$, we deduce $o(k)=p_S$ and hence $k\in N$, because $R/N$ is an elementary abelian $2$-group.
	This shows that the mapping $\eta_{S}$ is uniquely determined by the image of one fixed element $x\in R\setminus K$, which has to be of the form $xk$ for some $k\in N$. Thus we have at most $|N|$ choices for $\eta_{S}$. Once that $\eta_{S}$ is fixed, we have at most $2^{|R|/2p_S}\le 2^{|R|/6}$ choices for an $\eta_{S}$-invariant subset of $R\setminus K$. We deduce
	$$|\mathcal{C}'|\le 2^{\mathbf{c}(K)}\cdot |N|\cdot 2^{\frac{|R|}{6}}\le 2^{\mathbf{c}(R)-\frac{|R|}{12}+\log_2|N|}\le 2^{\mathbf{c}(R)-\frac{|R|}{192}+(\log_2|R|)^2+1}.\qedhere$$
\end{proof}

We end this section by pulling together the above results. We are able to show that for all but a small number of connection sets, every connection set $S$ for every group $R$ containing a nontrivial proper normal subgroup $N$ is covered in one of the previous two results. However, we may have to substitute a larger normal subgroup $K>N$ of $R$ for $N$, which may mean that the bound we achieve is not useful. These situations can be covered by the results from Section~\ref{sec:Nlarge}.

\begin{proof}[Proof of Theorem~$\ref{main2}$]
We use the notation established in Section~\ref{notation specific}. Let $$\mathcal{S}:=\{S\subseteq R\mid S=S^{-1},\,  \exists f\in \norm{\Aut(\Cay(R,S))}{N}\textrm{ with }f\ne 1 \textrm{ and }1^f=1, f \textrm{ fixes each }N\textrm{-orbit setwise}\}.$$
Observe that, for every $S\in\mathcal{S}$, we have $(B_S)_{v_0}\ne 1$. We divide the set $\mathcal{S}$ futher:
\begin{align*}
\mathcal{S}_1:=&\{S\in\mathcal{S}\mid&& R<\norm{\Aut(\Cay(R,S))}R\},\\
\mathcal{S}_2:=&\{S\in \mathcal{S}\setminus\mathcal{S}_1\mid&& \exists i\in \{2,\ldots,b\} \textrm{ with $o(i)$ odd such that }(F_S)_{v_0} \textrm{ has a nontrivial orbit on } \mathcal{O}_i\},\\
\mathcal{S}_3:=&\{S\in\mathcal{S}\setminus(\mathcal{S}_1\cup\mathcal{S}_2)\mid&& R/N \textrm{ not an elementary abelian 2-group},\\
&&& \exists i\in \{2,\ldots,b\} \textrm{ such that }(F_S)_{v_0} \textrm{ has an orbit of cardinality at least }3\textrm{ on } \mathcal{O}_i\},\\
\mathcal{S}_4:=&\{S\in\mathcal{S}\setminus(\mathcal{S}_1\cup\mathcal{S}_2\cup\mathcal{S}_3)\mid&& R/N \textrm{ is an elementary abelian 2-group}, (B_S)_{v_0}\ne 1\},\\
\mathcal{S}_5:=&\mathcal{S}\setminus(\mathcal{S}_1\cup\mathcal{S}_2\cup\mathcal{S}_3\cup\mathcal{S}_4).
\end{align*}
From Proposition~\ref{propo:aut}, Lemma~\ref{iodd-lemma} and Lemma~\ref{case3}, we have explicit bounds for $\mathcal{S}_1$, $\mathcal{S}_2$, $\mathcal{S}_3$ and $\mathcal{S}_4$, and hence we may consider only the set $\mathcal{S}_5$. 

Let $S\in \mathcal{S}_5$. Since $S\notin\mathcal{S}_4$, $R/N$ is not an elementary abelian $2$-group. Since $S\notin\mathcal{S}_3$, $(F_S)_{v_0}$ has orbits of cardinality at most $2$, and so does $(B_S)_{v_0}$. Therefore, $(F_S)_{v_0}$ and $(B_S)_{v_0}$ are elementary abelian $2$-groups. 

Now let $L_S=\{ \gamma_j:(F_S)_{v_0} \text{ is trivial on }\mathcal O_j\}$. Notice that $L_S$ is in fact a group. Since $(F_S)_{v_0} $ is nontrivial, then $L_S$ is a proper subgroup of $R$. Since $S\notin\mathcal{S}_2$,  $\gamma_i \in L_S$ for every $i$ with $o(i)$ odd. Therefore $NL_S$ contains all elements of $R$ of odd order. Let $$K:=\bigcap_{g\in RB_S}(NL_S)^g$$ be the core of $NL_S$ in $RB_S$. Since all conjugates of $NL_S$ in $R$ also contain all elements of $R$ of odd order, we deduce that $K$ also contains all elements of $R$ of odd order and hence $R/K$ is a $2$-group. As $(B_S)_{v_0}$ is also a $2$-group, we obtain that $RB_S/K$ is a $2$-group. 
Therefore $\norm {RB_S/K}{R/K}>R/K$. However, this implies that $\norm {RB_S}R>R$, but this contradicts the fact that $S\notin\mathcal{S}_1$. This shows that $\mathcal{S}_5=\emptyset$.
Now, adding the bounds produced for $\mathcal{S}_i$  for each $1 \le i \le 4$, we get the result. Indeed, using the first bound in Lemma~\ref{iodd-lemma} and the fact that $|R|\ge 2|N|\ge 4,$ we get 
$$|\mathcal{S}_2|\le 2^{\mathbf{c}(R)-\frac{|R|}{192|N|}+\log_2|R|+\log_2|N|-1}\le 2^{\mathbf{c}(R)-\frac{|R|}{192|N|}+(\log_2|R|)^2-2}.$$
Further, if $|R|<8,$ then $|R|\ne 7$ (because  $N$ is a nontrivial proper subgroup), that is $|R|\le 6.$ Consequently, $$\log_2(|R||N|^2/6)\le2\log_2|R|-2\le (\log_2|R|)^2-2.$$ If $|R|\ge 8,$ then $$\log_2(|R||N|^2/6)\le \log_2|R|+2\log_2|N|\le 3\log_2 |R|-2\le (\log_2 |R| )^2-2.$$ Using these, and the second bound in Lemma~\ref{iodd-lemma} we get 
$$|\mathcal{S}_3|\le 2^{\mathbf{c}(R)-\frac{|R|}{192|N|}+\log_2(|R||N|^2/6)}\le 2^{\mathbf{c}(R)-\frac{|R|}{192|N|}+(\log_2|R|)^2-2}.$$

This together with Proposition~\ref{propo:aut}, and Lemma~\ref{case3}, yields

\begin{align*}
|\mathcal{S}|\le&  2^{\mathbf{c}(R)- \frac{|R|}{192|N|}+( \log_2 |R|)^2}(1+2^{-2}+2^{-2}+2^{2})\le 2^{\mathbf{c}(R)- \frac{|R|}{192|N|}+( \log_2 |R|)^2+3},
\end{align*}
as required. 

As in the proof of Theorem~\ref{main1}, we do not need to include the bound from Proposition~\ref{propo:aut} if we include the condition $R=\norm{\Aut(\Cay(R,S))}R$. If we omit this condition, then we include this extra piece (which does not affect the overall bound as we have stated it) but must not allow groups that are either abelian of exponent greater than $2$, or generalised dicyclic.
\end{proof}

\thebibliography{10}
\bibitem{synchronization}J. Ara\'ujo, P. J. Cameron, B. Steinberg, Between primitive and $2$-transitive: Synchronization and its frineds, \textit{EMS Surv. Math. Sci.} \textbf{4} (2017), 101--184.
\bibitem{babai11}L.~Babai, Finite digraphs with given regular automorphism groups, \textit{Periodica Mathematica
Hungarica} \textbf{11} (1980), 257--270.



\bibitem{BaGo}L.~Babai, C.~D.~Godsil, On the automorphism groups of almost all Cayley graphs, \textit{European J. Combin.} \textbf{3} (1982), 9--15.




\bibitem{caranti}A.~Caranti, F.~Dalla Volta, M.~Sala, Abelian regular subgroups of the affine group and radical rings,
\textit{Publ. Math. Debrecen} \textbf{69} (2006),  297--308.

\bibitem{11}Y.~Caro, New results on the independence number, \textit{Tech. Report, Tel-Aviv University}, 1979.

(2010), 413--425.

\bibitem{DSV}E. Dobson, P. Spiga, G. Verret, Cayley graphs on abelian groups, \textit{Combinatorica }\textbf{36} (2016), 371--393.






\bibitem{God}C.~D. Godsil, GRRs for nonsolvable groups, \textit{Algebraic Methods in Graph Theory,} (Szeged, 1978), 221--239, \textit{Colloq. Math. Soc. J\'{a}nos Bolyai} \textbf{25}, North-Holland, Amsterdam-New York, 1981.

\bibitem{Go2}C.~D.~Godsil, On the full automorphism group of a graph, \textit{Combinatorica} \textbf{1} (1981), 243--256.




\bibitem{Het} D. Hetzel, \"{U}ber regul\"{a}re graphische Darstellung von aufl\"{o}sbaren Gruppen. Technische Universit\"{a}t, Berlin, 1976.

\bibitem{Im1} W. Imrich, Graphen mit transitiver Automorphismengruppen, \textit{Monatsh. Math.} \textbf{73} (1969), 341--347.

\bibitem{Im2} W. Imrich, Graphs with transitive abelian automorphism group, \textit{Combinat. Theory (Proc. Colloq. Balatonf\"{u}red, 1969}, Budapest, 1970, 651--656.

\bibitem{Im3} W. Imrich, On graphs with regular groups, \textit{J. Combinatorial Theory Ser. B.} \textbf{19} (1975), 174--180.



\bibitem{Li}C.~H.~Li, The finite primitive permutation groups containing an abelian regular subgroup, \textit{Proc. London Math. Soc.~(3)} \textbf{87} (2003), 725--747.

\bibitem{LMac}H.~Liebeck, D.~MacHale, Groups with Automorphisms Inverting most Elements, \textit{Math. Z.} \textbf{124} (1972), 51--63.

(1981), 69--81.



\bibitem{Miller}G.~A.~Miller, Groups containing the largest possible number of operators of order two, \textit{Amer.
Math. Monthly }\textbf{12} (1905), 149--151.

\bibitem{MSMS}J.~Morris, P.~Spiga, Asymptotic enumeration of Cayley digraphs, \textit{Israel J. Math.}, to appear.

\bibitem{MSV}J. Morris, P. Spiga, G. Verret, Automorphisms of Cayley graphs on generalised dicyclic groups, \textit{European J. Combin. }\textbf{43} (2015), 68--81.

\bibitem{NW1}L. A. Nowitz, M. Watkins, Graphical regular representations of direct product of groups, \textit{Monatsh. Math. }\textbf{76} (1972), 168--171.

\bibitem{NW2}L. A. Nowitz, M. Watkins, Graphical regular represntations of non-abelian groups, II, \textit{Canad. J. Math. }\textbf{24} (1972), 1009--1018.

\bibitem{NW3}L. A. Nowitz, M. Watkins, Graphical regular representations of non-abelian groups, I, \textit{Canad. J. Math. }\textbf{24} (1972), 993--1008.








\bibitem{spiga11}P.~Spiga, On the equivalence between a conjecture of Babai-Godsil and  a conjecture of Xu concerning the  enumeration of Cayley graphs, submitted.


\bibitem{40}P.~Tur\'an, An extremal problem in graph theory (hungarian), \textit{Mat. Fiz. Lapok} \textbf{48} (1941), 436--452.

\bibitem{Wat} M. E. Watkins, On the action of non-abelian groups on graphs, \textit{J. Combin. Theory} \textbf{11} (1971), 95--104.

\bibitem{41}V.~K.~Wei, A lower bound on the stability number of a simple graph, \textit{Bell Laboratories Technical Memorandum}, 81--11217--9, Murray Hill, NJ, 1981.


\end{document}